\documentclass[11pt,a4paper,reqno]{amsart}
\usepackage[margin=1.0in]{geometry}
\usepackage{amsmath,amsthm,amssymb,amsxtra,comment,graphicx,psfrag}
\usepackage{enumerate,enumitem}
\usepackage{bm,mathrsfs}
\usepackage{mathtools}
\usepackage{xspace}
\usepackage{color}
\usepackage{cite}
\usepackage{subcaption}
\usepackage{multirow}
\usepackage{longtable}
\usepackage{color}
\usepackage{hyperref}
\hypersetup{%
  colorlinks = true,
  citecolor=blue, 
  linkcolor  = black
}

\allowdisplaybreaks

\usepackage{titlesec}
\titleformat{\section}{\vskip10pt\large\bfseries}{\thesection.}{0.5em}{\centering\vspace{5pt}}
\titleformat{\subsection}{\vskip10pt\normalsize\bfseries}{\thesubsection.}{0.5em}{}
\newtheorem{theorem}{Theorem}[section]

\newtheorem{lemma}[theorem]{Lemma}

\newtheorem{remark}[theorem]{Remark}
\theoremstyle{definition}

\def\bfx{{\bf x}}
\def\bfe{{\bf e}}

\def\nu{n}

\def\vb{{\bf v}}

\def\d{{\mathrm d}}

\def\R{{\mathbb R}}

\def\ud{\underline{D}}
\def\md{\partial_{t}^\bullet}

\def\ehm{{\hat{e}_h^m}}
\def\eM{{e_h^{m+1}}}
\def\ehM{{\hat{e}_h^{m+1}}}

\def\Gm{{\Gamma^m}}

\def\Ghm{{\Gamma^m_h}}
\def\GhM{{\Gamma^{m+1}_h}}
\def\Ghsm{{\hat\Gamma^m_{h, *}}}
\def\GhsM{{\hat\Gamma^{m+1}_{h, *}}}

\def\Ghso{{\Gamma_{h,\rm f}^0}}

\def\Hm{{H^m}}

\def\nm{{n^m}}

\def\nhm{{n^m_h}}

\def\nhsm{{\hat n^m_{h, *}}}

\def\nbhsm{{\bar n^m_{h, *}}}

\def\nsm{{n^m_{*}}}
\def\nsM{{n^{m+1}_{*}}}

\def\Thsm{{\hat T^m_{h, *}}}

\def\Tbhsm{{\bar T^m_{h, *}}}

\def\Tsm{{T^m_{*}}}

\def\Nhsm{{\hat N^m_{h, *}}}

\def\Nsm{{N^m_{*}}}
\def\NsM{{N^{m+1}_{*}}}

\def\Nbhsm{{\bar N^m_{h, *}}}

\def\Tbhm{{\bar{T}^m_h}}

\def\nbhm{{\bar{n}^m_h}}

\numberwithin{equation}{section}

\begin{document}
\title[]{\parbox[b]{\linewidth+20pt}{\centering Stabilization and optimal $L^2$ convergence of Dziuk's method with piecewise linear parametric finite elements for curve-shortening flow}}

\author[]{Genming Bai}
\address{Genming Bai: Department of Mathematics and Statistics, Old Dominion University,
Norfolk, VA, USA. {\it Email address: \tt gbai@odu.edu}}


\subjclass[2010]{65M12, 65M60, 53E10, 53A04, 35R01, 35R35}

\keywords{Curve-shortening flow, geometric evolution equation, parametric finite element method, stability, convergence, trajectory, mass lumping, distance projection.} 
\allowdisplaybreaks

\maketitle
\vspace{-20pt}

\begin{abstract}\noindent
	{\small 
		We propose a stabilized version of the fully discrete Dziuk's method for the curve-shortening flow of a closed planar curve with piecewise linear parametric finite elements.
		With a carefully designed stabilization term, we are able to show a surprising discrete tangential stability of the Barrett--Garcke--N\"urnberg (BGN) type under the parabolic scaling $\tau\simeq h^2$---a feature hidden at the continuous level.
		Together with a new super-approximation result for the reversely averaged normal vector of linear elements, this Dziuk-type discrete tangential stability yields optimal $L^2$ convergence.
	}
\end{abstract} 

\setlength\abovedisplayskip{3.5pt}
\setlength\belowdisplayskip{3.5pt}

\section{Introduction}\label{sec:intro}

Let $\Gamma(t)\subset\mathbb R^2$, $t\in[0,T]$, be a smooth family of closed
curves evolving by curve-shortening flow.  With a fixed choice of unit normal
$n$ and mean curvature $H$, we consider the evolution system
\begin{align}\label{eq:PDE}
	\partial_t X=-Hn=\Delta_{\Gamma(t)} {\rm id}_{\Gamma(t)},
\end{align}
where $X(\cdot,t):\Gamma^0\to\Gamma(t)$ is the parametrization of the
flow.

Dziuk's parametric finite element method, introduced in the seminal work
\cite{Dziuk1991}, is one of the fundamental numerical approaches to
curvature-driven evolution. It discretizes the weak formulation of the evolution system \eqref{eq:PDE}. At the continuous level, however, \eqref{eq:PDE} lacks control in the tangential direction and hence is not strictly parabolic; see \cite[Eq.~(2.1)]{CY2007}. The same degeneracy appears at the numerical level as well. As studied in \cite{BL2023,Li2021,Li2020}, the trajectory error of \eqref{eq:PDE} exhibits parabolicity only in the normal direction and provides no tangential control.
This partial parabolicity is the main obstacle in the way of analyzing the discretization of \eqref{eq:PDE}.

A projection-error framework was subsequently developed in
\cite{BL2024,BL2025,BGV2026,BGV2027} within the standard parametric finite element framework of
\cite{DDE2005,DE2013,KLL19,BGN2020}. Instead of comparing prescribed
particle trajectories, this approach measures the distance between the numerical surface and its closest-point projection onto the exact surface. In terms of this projection error, full parabolicity can be recovered. Although unified, the main difficulty is that the inverse inequalities used to control the nonlinear geometric terms generally require finite elements of sufficiently high degree. For example, the convergence analyses in \cite{BL2024,Li2020} require polynomial degree
$k\geq3$. The purpose of the present paper is to extend the projection-error framework to the lowest-order case $k=1$ and to derive an optimal $L^2$ error estimate for the first time in this regime.

Aside from the unified parametric finite element analysis discussed above, ad hoc analysis was developed for $k=1$ as well.
In \cite{Dziuk1994}, Dziuk introduced a length-element analysis through an equivalent finite-difference formulation and proved a suboptimal $L^2$ error estimate for a semidiscrete scheme. Pozzi and Stinner \cite{PS2017} subsequently refined this approach and established super-convergence estimates leading to optimal $H^1$ convergence. Using techniques from \cite{Dziuk1994,PS2017,Li2020}, Ye and Cui \cite{YC2021} extended the analysis to a fully discrete scheme and obtained an optimal $H^1$ error estimate. The length-element analysis of \cite{Dziuk1994,PS2017} has also been adapted to modified curve-shortening models in \cite{JSZ2023}. For a tangentially regularized version of Dziuk's method, optimal $H^1$ convergence for piecewise linear elements was proved in \cite{BDS2017,EF2017,DD1994} using super-convergence estimates. In the regularized setting, the system is fully parabolic, so an analysis based on the length element is not required.

In this paper, we overcome the lowest-order obstruction within the
parametric finite element framework
\cite{DDE2005,DE2013,KLL19,BGN2020} and the projection-error framework \cite{BL2024,BL2025,BGV2026,BGV2027}. The two principal ingredients are a
reversely weighted normal and the introduction of a consistent stabilization term.
Let $t_m=m\tau$ with $\tau$ being the uniform timestep size, let $\Gamma_h^m$ be the current polygonal curve, and
denote by $S_h(\Gamma_h^m)$ the space of continuous piecewise linear
finite element functions on $\Gamma_h^m$. We write $I_h$ for the nodal
interpolation operator and use a superscript $h$ on an integral to
denote mass lumping. At a node $p$ shared by two adjacent elements, let
$n_h^m(p-)$ and $n_h^m(p+)$ denote the two one-sided unit normals, and
let $\ell_{h,-}^m$ and $\ell_{h,+}^m$ denote the lengths of the
corresponding elements. We define the reversely weighted normal
by
\begin{equation}\label{eq:intro-reverse-normal}
	\bar n_h^m(p)
	=
	\frac{
		\ell_{h,+}^m n_h^m(p-)
		+
		\ell_{h,-}^m n_h^m(p+)
	}{
		\ell_{h,-}^m+\ell_{h,+}^m
	}.
\end{equation}
Thus, each one-sided normal is weighted by the length of the opposite
element.

We propose the following stabilized Dziuk's method:
Given $\Gamma_h^m$, the new parametrization
$X_h^{m+1}\in[S_h(\Gamma_h^m)]^2$ is determined by
\begin{align}\label{eq:Dziuk-stab}
	&\int_{\Gamma_h^m}^{h}
	\frac{X_h^{m+1}-{\rm id}}{\tau}\cdot\phi_h
	+
	\int_{\Gamma_h^m}
	\nabla_{\Gamma_h^m}X_h^{m+1}\cdot
	\nabla_{\Gamma_h^m}\phi_h
	\notag\\
	&\qquad=
	\int_{\Gamma_h^m}
	\nabla_{\Gamma_h^m}{\rm id}\cdot
	\nabla_{\Gamma_h^m}
	I_h\!\left[\phi_h-(\phi_h\cdot\bar n_h^m)\bar n_h^m\right]
	\qquad
	\forall\,\phi_h\in[S_h(\Gamma_h^m)]^2,
\end{align}
and $\Gamma_h^{m+1}:=X_h^{m+1}(\Gamma_h^m)$.  If the right-hand side is
omitted, \eqref{eq:Dziuk-stab} reduces to the fully discrete counterpart of Dziuk’s semidiscrete scheme considered in \cite{Dziuk1994}.  The additional stabilization term is consistent because, on a smooth closed
curve,
\[
\int_\Gamma\nabla_\Gamma{\rm id}\cdot
\nabla_\Gamma\!\left[(I-n\otimes n)\phi\right]
=
\int_\Gamma Hn\cdot(I-n\otimes n)\phi
=0.
\]
A stabilization term of this form was first introduced for BGN-type methods in \cite{BL2025,BGV2026}. Surprisingly, it also enhances the
stability of Dziuk's method when and only when $k=1$ and the parabolic scaling $\tau\simeq h^2$ is imposed.



The main contributions of this paper are summarized as follows.
\begin{enumerate}[label=\textup{(\roman*)}]
	\item
	We construct and analyze the stabilized piecewise linear method
	\eqref{eq:Dziuk-stab}. The reversely weighted normal $\nbhsm$
	on the consistency curve, defined in \eqref{bar-n-hat-n} as the
	counterpart of \eqref{eq:intro-reverse-normal}, satisfies the
	second-order nodal super-approximation property
	\[
	\nbhsm(p)=n(p)+O(h^2),
	\]
	at any finite element node $p$ on the consistency curve $\Ghsm$ which is defined in Section \ref{subsec:numerical-and-interpolated-curves}. 
	This additional approximation order plays a key role in closing the stability loop. In particular, it helps establish the discrete tangential stability \eqref{eq:tan_stab_H1_e} for the lowest-order finite element.
	
	
	\item
	We derive a discrete tangential smoothing mechanism for the lowest-order case. 
	At the continuous level, the evolution system \eqref{eq:PDE} implies
	\begin{align}\label{eq:tan-stab-cont}
		(I-n\otimes n) \partial_t X = 0.
	\end{align}
	Unlike the BGN method, the discrete analogue of the vanishing right-hand side of \eqref{eq:tan-stab-cont} is not clear:
	\begin{align}\label{eq:tan-stab-disc}
		(I-n\otimes n) \partial_t e =\, ?.
	\end{align}
	Specifically, the continuous relation \eqref{eq:tan-stab-cont} provides no information regarding what should replace the placeholder $``?"$ in \eqref{eq:tan-stab-disc}.
	In other words, discrete tangential stability cannot be directly inferred from the vanishing right-hand side at the continuous level.
	Whether this placeholder $``?"$ remains numerically stable is the core question addressed in this paper.
	We provide a positive answer in Section \ref{sec:tan_stab}.
	This Dziuk-type tangential stability arises from a delicate interplay among the super-approximation of $\nbhsm$, mass lumping, and the parabolic scaling $\tau\simeq h^2$.
	Notably, for higher scaling regimes where $\tau\simeq h^{k+1}$ with $k\geq 2$, this discrete tangential stability is lost. 
	In such regimes, the mass bilinear form overwhelms the stiffness bilinear form, the latter of which serves as the fundamental source of the discrete tangential stability; see Section \ref{sec:tan_stab} for more details.
	
	It should also be noted that discrete tangential stability is indispensable for the convergence proof. If one relies solely on the standard velocity estimates presented in Section \ref{sec:bbd_vel}, a power of the consistency error is lost, preventing the closure of the discrete stability loop.
	
	
	\item
	We establish the optimal $L^2$ convergence rate of $O(\tau+h^2)$ for the scheme \eqref{eq:Dziuk-stab} with linear finite elements under the parabolic scaling $\tau\simeq h^2$.
	This improves upon the existing $H^1$-optimal convergence rate in  \cite{Dziuk1994,PS2017,YC2021}, which rely on finite-difference analysis. 
	More importantly, our analysis is developed within the unified projection-error framework of \cite{BL2024,BL2025,BGV2026,BGV2027}. It avoids finite-difference characterization and is therefore not tailored specifically to piecewise linear elements.
	
	
\end{enumerate}


The remainder of the paper is organized as follows.
Section~\ref{section:preparation} introduces the parametric finite element
setting, the consistency and closest-point-projected curves, the
reverse-weighted normals, the geometric identities, and the main theorem.
Section~\ref{sec:cons_err} establishes the consistency estimates.
Section~\ref{sec:stab} develops the discrete $H^1$ parabolicity
decomposition, the tangential and full velocity estimates, the displacement
and norm-conversion bounds, the main energy estimate, and the uniform
shape-regularity argument.  The appendices collect the notation,
surface-calculus identities, super-approximation estimates, discrete norm
equivalences, and the proof of the norm-conversion lemma.

\section{Notation, geometric framework and the main result}
\label{section:preparation}

%
%
%
%

\subsection{Parametric finite element method}
\label{subsec:parametric-finite-elements}


Let $\hat K=[0,1]\subset\R$ be the reference interval and let
$\mathbb P_1(\hat K)$ denote the space of polynomials of degree
$1$ on $\hat K$. An order-$1$ parametric triangulation $\mathcal T$ of a globally continuous and piecewise smooth
closed curve with domain $D(\mathcal T)\subset\mathbb R^2$ is a collection of elements
$K$ of the form
\[
K=\hat F_K(\hat K),
\qquad
\hat F_K\in[\mathbb P_1(\hat K)]^2.
\]
Since the curve is globally continuous,
the parametrization is assumed to be compatible at common endpoints.
We write
$h:=\max_{K\in\mathcal T}\operatorname{diam}K$ for the mesh size and denote by $\mathcal N(\mathcal T)\subset\R^2$ the set of nodes of the discrete curve $D(\mathcal T)$. 
The associated parametric finite element space is
defined by
\begin{equation}
	S_h(\mathcal T)
	:=
	\bigl\{
	v_h\in H^1(D(\mathcal T))\cap C^0(D(\mathcal T)):
	v_h|_K\circ \hat F_K\in\mathbb P_1(\hat K)
	\quad\forall K\in\mathcal T
	\bigr\}.
	\notag
\end{equation}
$\R^q$-valued spaces are denoted by $[S_h(\mathcal T)]^q$. We also use
$W_h^{s,p}(D(\mathcal T))$ for the corresponding broken Sobolev space, whose
norm is obtained from the elementwise $W^{s,p}$ norms.

Two order-$1$ parametric triangulations $\mathcal T_1$ and $\mathcal T_2$ are called equivalent, denoted by $\mathcal T_1 \cong \mathcal T_2$, if they have the same number of elements and there is a one-to-one correspondence between $\mathcal T_1$ and $\mathcal T_2$, i.e., for each $K_1\in\mathcal T_1$ there exists a unique $K_2\in\mathcal T_2$ so that the glue of the piecewise-defined transition map
\[
f_h|_{K_1}
:=
\hat F_{2,K}\circ \hat F_{1,K}^{-1}
:
K_1\rightarrow K_2
\]
defines a $W^{1,\infty}$ homeomorphism
\[
f_h:D(\mathcal T_1)\rightarrow D(\mathcal T_2).
\]
Equivalently, $f_h\in[S_h(\mathcal T_1)]^2$ and $\mathcal T_2$ is the
parametric image of $\mathcal T_1$ under $f_h$. Such a map induces the
canonical identification
\begin{equation}
	v_h\in S_h(\mathcal T_1)
	\quad\longleftrightarrow\quad
	v_h\circ f_h^{-1}\in S_h(\mathcal T_2).
	\label{eq:nodal-identification}
\end{equation}
This identification preserves the nodal vector. Conversely, a nodal vector
has a unique realization as a finite element function on each member of a
family of equivalent triangulations. Throughout the paper, finite element
functions on equivalent curves are identified in this way. 
For example, the two integrands of $$\int_{D(\mathcal T_1)} \phi_h \quad\mbox{and}\quad \int_{D(\mathcal T_2)} \phi_h$$ have the same vector of nodal values, denoted by $\vb$, but are defined on different domains $D(\mathcal T_1)$ and $D(\mathcal T_2)$. When the underlying domain is specified, $\vb$ is automatically realized to a finite element function $\phi_h$ on that domain. Since all of the quantitative computations in this paper involve either integrals or norms, our notation for finite element functions will always have a unique and clear meaning. For another example, $\| \phi_h \|_{L^2(D(\mathcal T_1))}$ and $\| \phi_h \|_{L^2(D(\mathcal T_2))}$ denote the norms of finite element functions with the same nodal vector on the two different domains $D(\mathcal T_1)$ and $D(\mathcal T_2)$, respectively.

\subsection{Lagrange interpolation and mass lumping}
\label{subsec:Lagrange-interpolation}

Let
\[
I_{\hat K}:C^0(\hat K)\rightarrow\mathbb P_1(\hat K)
\]
be the degree-$1$ Lagrange interpolation operator on the flat reference domain $\hat K$. For an order-$1$ parametric triangulation
$\mathcal T$, the global interpolation operator
\[
I_h(\mathcal T):
C^0(D(\mathcal T))
\rightarrow
S_h(\mathcal T)
\]
is defined elementwise by
\begin{equation}
	\bigl(I_h(\mathcal T)v\bigr)|_K
	:=
	I_{\hat K}[v\circ \hat F_K]\circ \hat F_K^{-1},
	\qquad K\in\mathcal T.
	\notag
\end{equation}
The definition is applied componentwise to vector- and matrix-valued
functions.

The interpolation operator is compatible with the identification
\eqref{eq:nodal-identification}. More precisely, if $\mathcal T_2$ is
parametrized over $\mathcal T_1$ by $f_h$, then
\begin{equation}
	I_h(\mathcal T_2)(v\circ f_h^{-1})
	=
	\bigl(I_h(\mathcal T_1)v\bigr)\circ f_h^{-1}.
	\notag
\end{equation}
Consequently, the interpolants on two equivalent triangulations have the
same nodal vector. We shall therefore omit the triangulation argument and
write simply $I_h$ whenever its domain is unambiguous.


With the Lagrange interpolation operator above, the corresponding mass-lumped integral is defined as
\begin{equation}
	\int_{D(\mathcal T)}^h f
	:=
	\int_{D(\mathcal T)} I_h f
	=
	\sum_{K\in\mathcal T}\frac{|K|}{2}
	\sum_{p\in\mathcal N(K)}
	f(p) .
	\label{eq:mass-lumped-inner-product}
\end{equation}

\subsection{Shape regularity constants and the interpolation error estimates}

The approximation properties of $I_h(\mathcal T)$ are closely related to the shape regularity constants.
Suppose that we have a parametric triangulation $\mathcal T$ parametrized by $F(\mathcal T)\in S_h(\mathcal{T}_0)^2$ over some canonical reference triangulation $\mathcal{T}_0$. Typically, $\mathcal{T}_0$ is chosen as the flat triangulation $\Gamma_{h,{\rm f}}^0$, defined later in \eqref{eq:flat-reference-curve}. 
The associated shape regularity quantities are defined as
\begin{align}\label{P}
	\begin{aligned}
		\kappa_*(\mathcal T) 
		:= \| F(\mathcal T) \|_{W^{1,\infty}(D(\mathcal T_0))} 
		+ \| F(\mathcal T)^{-1}\|_{W^{1,\infty}(D(\mathcal T))} 
		.
	\end{aligned}
\end{align}
The following standard Lagrange interpolation error estimates hold due to the local approximation property on the reference triangulation $\mathcal T_0$ and the chain rule for differentiation.
\begin{lemma}\label{lemma:Ih}
	Given a parametric triangulation $\mathcal T$ and its underlying domain $D(\mathcal T)$,
	for any function $f\in W_h^{2,p}(D(\mathcal T)) \cap C^0(D(\mathcal T))$ and $p\in[1,\infty]$, we have
	\begin{align}
		&\| (1-I_h) f \|_{L^{p}(D(\mathcal T))}
		+
		h
		\| \nabla_{D(\mathcal T)} (1-I_h) f \|_{L^{p}(D(\mathcal T))}
		\leq
		C_{\kappa_*(\mathcal T)}
		h^{2}\| f \|_{W_h^{2,p}(D(\mathcal T))} .
		\notag
	\end{align}
\end{lemma}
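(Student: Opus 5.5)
The plan is the standard pull-back–interpolate–push-forward argument, carried out elementwise. Fix $K\in\mathcal T$, write $K=F(\mathcal T)(K_0)$ with $K_0\in\mathcal T_0$ the corresponding element of the reference triangulation, and set $F_K:=F(\mathcal T)|_{K_0}$. By the compatibility of $I_h$ with the identification \eqref{eq:nodal-identification},
\[
(1-I_h)f\circ F_K=(1-I_h)(f\circ F_K)\quad\text{on }K_0 .
\]
It therefore suffices to bound the interpolation error of $g:=f\circ F_K$ on $K_0$ and then transfer the bound back to $K$.

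On $\mathcal T_0$ (a flat polygon with elements of length $\simeq h$), the classical one-dimensional Bramble--Hilbert/scaling argument applies after mapping $K_0$ affinely to $\hat K$. It gives
\[
\|(1-I_h)g\|_{L^p(K_0)}+h\|\nabla(1-I_h)g\|_{L^p(K_0)}\le Ch^2\|\nabla^2_{K_0}g\|_{L^p(K_0)} .
\]
Here the second derivative is the tangential second derivative along the straight segment $K_0$. Its constant depends only on the quasi-uniformity of $\mathcal T_0$, which is fixed.

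The next step is to estimate $\nabla^2_{K_0}g$ by the chain rule. The key point is that $F_K$ is affine on $K_0$, so its second derivative vanishes. Hence
\[
\nabla^2_{K_0}g=\nabla_{K_0}F_K^{\top}\,(\nabla_K^2 f\circ F_K)\,\nabla_{K_0}F_K .
\]
Strictly speaking, because $K$ is itself a straight segment, $\nabla_K^2 f$ is just the second arclength derivative. This yields
\[
\|\nabla^2 g\|_{L^p(K_0)}\le \|F(\mathcal T)\|_{W^{1,\infty}}^{2}\,\|\nabla_{F(\mathcal T)^{-1}}\|_{L^\infty}^{1/p}\,\|f\|_{W^{2,p}(K)} ,
\]
where the last factor comes from the change of variables in the integral (the Jacobian of $F_K^{-1}$).

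For the push-forward, I would use $\|(1-I_h)f\|_{L^p(K)}\le \|F_K\|_{W^{1,\infty}}^{1/p}\|(1-I_h)g\|_{L^p(K_0)}$ and $\nabla_K v=(\nabla_{K_0}(v\circ F_K))\,\nabla F_K^{-1}$. Each factor is controlled by $\kappa_*(\mathcal T)$. Raising the elementwise bounds to the $p$th power and summing over $K$ gives the claim for $p<\infty$; for $p=\infty$ one takes the maximum over $K$ instead.

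I do not expect a genuine obstacle here. The only point requiring care is the bookkeeping of mesh sizes. The $h$ in the estimate on $K_0$ is the mesh size of $\mathcal T_0$, which is comparable to the $h$ of $\mathcal T$ up to factors of $\kappa_*(\mathcal T)$, since $\operatorname{diam}K\le\|F\|_{W^{1,\infty}}\operatorname{diam}K_0$ and conversely. One also has to use that $f$ is only piecewise $W^{2,p}$ but globally continuous, so that $I_hf$ is well defined. Together these give the constant $C_{\kappa_*(\mathcal T)}$.
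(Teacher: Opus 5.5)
Your proposal is correct and follows the route the paper indicates: the paper gives no detailed proof and only appeals to the local approximation property on the reference triangulation $\mathcal T_0$ together with the chain rule. Your key observation is the right one: $F_K$ is affine for $k=1$, so no $W^{2,\infty}$ bound on $F(\mathcal T)$ is needed and $\kappa_*(\mathcal T)$ suffices. One minor correction: the one-dimensional local estimate on $K_0$ does not require quasi-uniformity of $\mathcal T_0$. It only requires $h_{K_0}\le\kappa_*(\mathcal T)\,h$, which your mesh-size bookkeeping already provides.
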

Pullback by $F$ and $F^{-1}$ gives the following norm equivalence
\[
C_{\kappa_*}^{-1}\|v_h\|_{W^{1,p}(D(\mathcal T_0))}
\leq
\|v_h\|_{W^{1,p}(D(\mathcal T))}
\leq
C_{\kappa_*}\|v_h\|_{W^{1,p}(D(\mathcal T_0))}.
\]

\subsection{Norm equivalence on different triangulations}
\label{subsec:norm-equivalence}

Given two equivalent triangulations $\mathcal T_1 \cong \mathcal T_2$ with the transition map $f_h:D(\mathcal T_1)\rightarrow D(\mathcal T_2)$, we can define the linearly interpolated triangulation $\mathcal T_{1,2}^\theta := (1-\theta) \mathcal T_1 + \theta \mathcal T_2$, parametrized by $f_h^\theta:=(1-\theta){\rm id}_{D(\mathcal T_1)}+\theta f_h:D(\mathcal T_1)\rightarrow D(\mathcal T_{1,2}^\theta)$.
The canonical nodal identification allows a finite element function to be
realized on any triangulation $\mathcal T_{1,2}^\theta$. The following result,
proved in \cite[Lemma 4.3]{KLL17} and \cite[Lemma 7.2]{KLL19}, shows that the corresponding norms are
uniformly equivalent when the transition map is sufficiently small.

\begin{lemma}\label{lemma:norm-equiv}
	Suppose that
	\[
	\|\nabla_{D(\mathcal T_1)}(f_h-{\rm id}_{D(\mathcal T_1)})\|_{L^\infty(D(\mathcal T_1))}
	\leq \frac12.
	\]
	Then, uniformly for $\theta\in[0,1]$ and for $1\leq p\leq\infty$, every
	finite element function $v_h$ with a common nodal vector satisfies
	\begin{align*}
		c^{-1}\|v_h\|_{L^p(D(\mathcal T_1))}
		&\leq
		\|v_h\|_{L^p(D(\mathcal T_{1,2}^\theta))}
		\leq
		c
		\|v_h\|_{L^p(D(\mathcal T_1))},
		\\
		c^{-1}
		\|\nabla_{D(\mathcal T_1)}v_h\|_{L^p(D(\mathcal T_1))}
		&\leq
		\|\nabla_{D(\mathcal T_{1,2}^\theta)}v_h
		\|_{L^p(D(\mathcal T_{1,2}^\theta))}
		\leq
		c
		\|\nabla_{D(\mathcal T_1)}v_h\|_{L^p(D(\mathcal T_1))},
	\end{align*}
	where $c$ is a universal constant which is independent of $\theta$, $p$, $\mathcal T_1$ and $\mathcal T_2$.
\end{lemma}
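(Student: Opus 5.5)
The argument is elementwise: on each element the relevant quantities are the Jacobians of affine maps, and the hypothesis keeps those Jacobians uniformly away from zero. Since both triangulations are order-$1$ parametric curves, each element $K_1\in\mathcal T_1$ is a straight segment. The restriction $f_h|_{K_1}$ is affine, so $f_h^\theta|_{K_1}=(1-\theta){\rm id}+\theta f_h$ is affine as well. Its image $K^\theta$ is an element of $\mathcal T_{1,2}^\theta$. On $K_1$ the tangential derivative $\nabla_{D(\mathcal T_1)} f_h^\theta$ is the constant map $t_1\mapsto t_1+\theta\,\nabla_{D(\mathcal T_1)}(f_h-{\rm id})t_1$, where $t_1$ is the unit tangent of $K_1$. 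The hypothesis then gives the stretching factor bound
\[
\tfrac12\le 1-\theta\cdot\tfrac12\le J^\theta:=|\nabla_{D(\mathcal T_1)} f_h^\theta\, t_1|\le 1+\theta\cdot\tfrac12\le \tfrac32 .
\]
This is a scalar bound on a one-dimensional element. In particular $f_h^\theta$ is injective on each element, so the length element $J^\theta$ is well defined and bounded above and below, uniformly in $\theta$.

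For the $L^p$ equivalence I would change variables element by element. Under the canonical identification \eqref{eq:nodal-identification}, $v_h$ on $D(\mathcal T_{1,2}^\theta)$ equals $v_h\circ(f_h^\theta)^{-1}$, so
\[
\int_{K^\theta}|v_h|^p=\int_{K_1}|v_h|^p J^\theta .
\]
Summing over elements and using $J^\theta\in[\tfrac12,\tfrac32]$ gives the two-sided bound, with a constant $c$ that can be taken as $2$ for every $p$. The case $p=\infty$ is immediate, since the set of values attained is unchanged.

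For the gradients I would use the one-dimensional chain rule on each element:
\[
\nabla_{D(\mathcal T_{1,2}^\theta)}v_h\circ f_h^\theta=(J^\theta)^{-1}\,\partial_{t_1}v_h\; t^\theta ,
\]
where $t^\theta$ is the unit tangent of $K^\theta$. This yields
\[
|\nabla_{D(\mathcal T_{1,2}^\theta)} v_h|\circ f_h^\theta=(J^\theta)^{-1}|\nabla_{D(\mathcal T_1)}v_h| .
\]
Combining this pointwise identity with the measure change $J^\theta$ gives
\[
\int_{K^\theta}|\nabla v_h|^p=\int_{K_1}(J^\theta)^{1-p}|\nabla_{D(\mathcal T_1)} v_h|^p .
\]
The factor $(J^\theta)^{1-p}$ lies between $(3/2)^{1-p}$ and $2^{p-1}$. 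Taking $p$-th roots, the constant is at most $\max(2,3/2)^{(p-1)/p}\cdot 2^{1/p}\le 4$, uniformly in $p$. The case $p=\infty$ follows directly from the pointwise identity.

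The only step needing care, and where I expect the main obstacle, is the lower bound $J^\theta\ge\tfrac12$. This bound requires that the operator norm of $\nabla(f_h-{\rm id})$ acting on the tangent is at most $\tfrac12$, so the reading of the hypothesis's matrix norm matters. If the hypothesis is read with the Frobenius norm, which dominates the operator norm, the argument is unaffected.
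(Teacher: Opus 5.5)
Your proof is correct, but it follows a different route from the one the paper relies on. The paper does not prove the lemma itself; it cites \cite[Lemma 4.3]{KLL17} and \cite[Lemma 7.2]{KLL19}. There, $\theta\mapsto D(\mathcal T_{1,2}^\theta)$ is treated as a continuous deformation with velocity $f_h-{\rm id}$, and the argument proceeds in three steps:
\begin{enumerate}
\item differentiate the $L^p$ and gradient norms in $\theta$ using transport formulas of the type in items 5--6 of Lemma~\ref{lemma:ud};
\item use the smallness $\frac12$ to bound the tangential gradient of the displacement on every intermediate curve, uniformly in $\theta$;
\item close with Gronwall's inequality.
\end{enumerate}
That argument does not depend on the dimension or the polynomial degree, so it covers surfaces and isoparametric elements, which is why it is the standard reference.

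You instead use the fact that on a curve the tangential differential of $f_h^\theta$ reduces to a single stretching factor $J^\theta=|t_1+\theta\,\partial_s(f_h-{\rm id})|\in[\frac12,\frac32]$. The measure change and the chain rule then become explicit identities, and you get a universal constant $c=2$ with no ODE or Gronwall step. Your route is more elementary and sharper, but it relies essentially on the one-dimensional setting. The affine structure is convenient but not essential: the pointwise bound on $J^\theta$ and the change of variables also work for curved one-dimensional elements.

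The step you flag as the main obstacle is not a real issue. On a curve, the tangential gradient of a vector field is the rank-one matrix $t_1(\partial_s u)^\top$, and its operator and Frobenius norms both equal $|\partial_s u|$. So under any standard reading, the hypothesis says exactly $|\partial_s(f_h-{\rm id})|\le\frac12$.

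Your gradient bounds also give $c=2$ directly. The two factors are $2^{(p-1)/p}\le 2$ and $(2/3)^{(p-1)/p}\ge \frac23$, so the final ``$\le 4$'' is only an overestimate.
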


\subsection{The numerical curve $\Ghm$ and the consistency curve $\Ghsm$}
\label{subsec:numerical-and-interpolated-curves}


Let $\Gamma_h^0$ be the initial
parametric curve with the triangulation $\mathcal T_{h}^0$. For each element $K^0\subset\Gamma_h^0$, let
$K_{\rm f}^0$ be the flat segment with the same endpoints as $K^0$.
These flat segments form the
piecewise linear reference curve
\begin{equation}
	\Ghso
	:=
	\bigcup_{K^0\subset\Gamma_h^0}K_{\rm f}^0,
	\label{eq:flat-reference-curve}
\end{equation}
with the underlying triangulation $\mathcal T_{h, {\rm f}}^0$.
The connectivity and node numbering on $\Ghso$ are inherited from
$\Gamma_h^0$. In the present piecewise linear case, $k=1$, the curves
$\Ghso$ and $\Gamma_h^0$ coincide. Retaining the notation $\Ghso$ makes the
parametric identifications at later time levels explicit.

Let
\[
{\bf x}^m=(x_1^m,\ldots,x_J^m)^\top
\]
be the nodal vector produced by the fully discrete method \eqref{eq:Dziuk-stab} at time
$t_m=m\tau$, $m=0,...,\lfloor T/\tau \rfloor$. We denote by
\[
X_h^m:\Ghso\longrightarrow\mathbb R^2
\]
the unique function in $[S_h(\Ghso)]^2$ with nodal vector ${\bf x}^m$ and
define
\begin{equation}
	\mathcal T_h^m
	:=
	\bigl\{
	X_h^m(K_{\rm f}^0):
	K_{\rm f}^0\subset\Ghso
	\bigr\},
	\qquad
	\Ghm
	:=
	D(\mathcal T_h^m)
	=
	X_h^m(\Ghso).
	\label{eq:numerical-parametric-curve}
\end{equation}
Whenever $X_h^m$ is regular, injective elementwise and has no self-intersections, $\mathcal T_h^m$ is a
parametric triangulation equivalent to the reference triangulation $\mathcal T_{h, {\rm f}}^0$. Under
the nodal identification introduced in \eqref{eq:nodal-identification}, the
realization of $X_h^m$ on $\Ghm$ is the identity map ${\rm id}_{\Gamma_h^m}$. Likewise, the function
$X_h^{m+1}$ in \eqref{eq:Dziuk-stab}, when realized on $\Ghm$, is the local
discrete flow map from $\Ghm$ to $\Gamma_h^{m+1}$.

Let $\Gm=\Gamma(t_m)$ be the exact curve and let
\[
a^m:
U_\delta(\Gm)
\longrightarrow
\Gm
\]
be the distance retraction, defined on a tubular neighborhood
$U_\delta(\Gm)$. Whenever $\Ghm\subset U_\delta(\Gm)$, set
\begin{equation}
	\hat{\bf x}_*^m
	:=
	\bigl(
	a^m(x_1^m),\ldots,a^m(x_J^m)
	\bigr)^\top.
	\label{eq:projected-nodal-vector}
\end{equation}
The consistency parametric map
$\hat X_{h,*}^m\in[S_h(\Ghso)]^2$ is the unique finite element function with
nodal vector $\hat{\bf x}_*^m$; equivalently,
\begin{equation}
	\hat X_{h,*}^m
	=
	I_h\bigl(a^m\circ X_h^m\bigr)
	\qquad\hbox{on }\Ghso.
	\label{eq:interpolated-parametric-map}
\end{equation}
We then define
\begin{equation}
	\hat{\mathcal T}_{h,*}^m
	:=
	\bigl\{
	\hat X_{h,*}^m(K_{\rm f}^0):
	K_{\rm f}^0\subset\Ghso
	\bigr\},
	\qquad
	\Ghsm
	:=
	D(\hat{\mathcal T}_{h,*}^m)
	=
	\hat X_{h,*}^m(\Ghso).
	\label{eq:interpolated-parametric-curve}
\end{equation}

\subsection{Lift and the inverse lift}
\label{subsec:lift-and-inverse-lift}

Suppose that $\Ghsm$ is sufficiently close to $\Gm$, then
\[
a^m|_{\Ghsm}
:
\Ghsm
\longrightarrow
\Gm
\]
is a bijection. If $v$ is a function on $\Ghsm$, its lift to $\Gm$ is
defined by
\begin{equation}
	v^\ell
	:=
	v\circ(a^m|_{\Ghsm})^{-1}.
	\notag
\end{equation}
Conversely, if $f$ is a function on $\Gm$, its inverse lift to $\Ghsm$ is
\begin{equation}
	f^{-\ell}
	:=
	f\circ a^m.
	\notag
\end{equation}

The fact that every node of $\Ghsm$ lies on $\Gm$ implies a useful
interpolation identity. Indeed, for
\[
K=F_K(K_{\rm f}^0)\subset\Ghsm,
\]
where $F_K$ is the local parametrization map of $K$ on $K_{\rm f}^0$, we have $a^m\circ F_K=F_K$ at all nodes of $K_{\rm f}^0$. Hence
\begin{equation}
	I_{K_{\rm f}^0}[a^m\circ F_K]
	=
	F_K,
	\qquad
	I_h a^m
	:=
	I_{K_{\rm f}^0}[a^m\circ F_K]\circ F_K^{-1}
	=
	{\rm id}
	\quad\hbox{on }K.
	\label{eq:interpolation-of-distance-projection}
\end{equation}

\subsection{Approximation properties of $\Ghsm$}
\label{section:interpolated}

The constants in the interpolation and geometric perturbation estimates
depend on the regularity of the consistency triangulations. In view of \eqref{P}, for
$0\leq l\leq M=\lfloor T/\tau \rfloor$, we quantify this dependence by
\begin{align}\label{PP}
	\begin{split}
		\kappa_l
		:=&\max_{0\le m\le l}
		\big( \| \hat X_{h,*}^m\|_{W^{1,\infty}(\Gamma_{h,\rm f}^0)}
		+ \| (\hat X_{h,*}^m)^{-1} \|_{W^{1,\infty}(\Ghsm)} \big) 
		\\
		=&\max_{0\le m\le l}
		\big( \max_{K\subset\hat\Gamma_{h,*}^m} \|F_K\|_{W^{1,\infty}(K_{\rm f}^0)}
		+ \max_{K\subset\hat\Gamma_{h,*}^m} \|F_{K}^{-1}\|_{W^{1,\infty}(K)} \big) 
		.
	\end{split}
\end{align}

Since the nodes of $\Ghsm$ lie on $\Gamma^m$, the map
$\hat X_{h,*}^m$ is the Lagrange interpolant of
$a^m\circ\hat X_{h,*}^m$ on $\Gamma_{h,\rm f}^0$. Standard interpolation estimates (Lemma~\ref{lemma:Ih}) therefore imply
\begin{align}
	\|a^m\circ \hat X_{h,*}^m - \hat X_{h,*}^m
	\|_{L^\infty(\Gamma_{h,\rm f}^0)}
	+ h \|a^m\circ \hat X_{h,*}^m - \hat X_{h,*}^m
	\|_{W^{1,\infty}(\Gamma_{h,\rm f}^0)}
	&\le C_{\kappa_l}h^{2}.
	\notag
\end{align}

Let $n^m$ and $H^m$ denote the unit normal and the curvature of $\Gamma^m$,
respectively. Their normal extensions to $U_\delta(\Gamma^m)$ are
\[
n_*^m:=n^m\circ a^m,
\qquad
H_*^m:=H^m\circ a^m.
\]
Denote by $\hat n_{h,*}^m$ the unit normal vector of $\Ghsm$.
Since the approximation of normal vector is in general one order worse than the position approximation, we have
\begin{align}\label{normal-intpl}
	\|\hat n_{h,*}^m - n_*^m\|_{L^\infty(\Ghsm)}
	&\le C_{\kappa_l}h.
\end{align}


\subsection{Reversely weighted normals}
\label{subsec:averaged-normal-basic}

We next define the reversely weighted normals on both the
consistency and numerical curves. Let $p\in\mathcal N(\Ghsm)$ be shared by
the adjacent elements $K_-$ and $K_+$, with the traces from $K_-$ and $K_+$
denoted by $p-$ and $p+$, respectively. Set
\[
\ell_{h,*,i}^m:=|w_{K_i}(p)|\,|K_{i\rm f}^0|,
\qquad
w_K:=\nabla_{K_{\rm f}^0}F_K\circ F_K^{-1}.
\]
The reversely weighted normal $\bar n_{h,*}^m\in[S_h(\Ghsm)]^2$ is the
unique finite element function satisfying
\begin{align}
	\bar n_{h,*}^m(p)
	&=
	\frac{\ell_{h,*,+}^m\,\hat n_{h,*}^m(p-)
		+\ell_{h,*,-}^m\,\hat n_{h,*}^m(p+)}{\ell_{h,*,-}^m+\ell_{h,*,+}^m}.
	\label{bar-n-hat-n}
\end{align}
Thus, the normal on either element is weighted by the length of the opposite
element. Similarly, we can define $\ell_{h,i}^m$, $i=\pm$, on $\Ghm$ whose unit normal vector is $\nhm$.
Then, we define
\begin{align}
	\bar n_h^m(p)
	&=
	\frac{\ell_{h,+}^m n_h^m(p-)
		+\ell_{h,-}^m n_h^m(p+)}{\ell_{h,-}^m+\ell_{h,+}^m}.
	\label{eq:bar_n}
\end{align}
This is the averaged normal used in the scheme
\eqref{eq:Dziuk-stab}.

Since the two one-sided normals have unit length, elementary algebra gives (cf. \cite[Eqs. (3.18) and (4.33)]{BL2024})
\begin{subequations}
\begin{align}
	| \bar n_{h,*}^m(p) | &\leq 1,
	&
	\big| | \bar n_{h,*}^m(p) | - 1\big|
	&\le C | \hat n_{h,*}^m(p+) - \hat n_{h,*}^m(p-) |^2
	\le C_{\kappa_l}h^2,
	\label{eq:nhs_bar_len0}\\
	| \bar n_h^m(p) | &\leq 1,
	&
	\big| | \bar n_h^m(p) | - 1\big|
	&\le C | n_h^m(p+) - n_h^m(p-) |^2. \label{eq:nhs_bar_len}
\end{align}
\end{subequations}

The unit normals on $\Ghm$ and $\Ghsm$, i.e., $\nhm$ and $\nhsm$, can be compared along the
intermediate curves introduced above in Section \ref{subsec:norm-equivalence}. Item~7 of Lemma~\ref{lemma:ud}, the
fundamental theorem of calculus, Lemma~\ref{lemma:lump}, and norm equivalence (Lemma~\ref{lemma:norm-equiv})
give
\begin{align}\label{normal-intpl-2}
	\|n_{h}^m - \hat n_{h,*}^m\|_{L^p(\Ghsm)}
	+ \|n_{h}^m - \hat n_{h,*}^m\|_{L_h^p(\Ghsm)}
	&\lesssim \|\nabla_\Ghsm\hat e_h^m\|_{L^p(\Ghsm)}
	\quad\forall p\in[1,\infty]
	.
\end{align}
Here the piecewise polynomial function $n_h^m$ on $\Ghm$ is identified as a piecewise polynomial function on
$\Ghsm$ according to our convention. Applying the same argument to the nodal formulas
\eqref{bar-n-hat-n} and \eqref{eq:bar_n}, including the variation of their
length weights, yields (cf. \cite[Eq. (4.31)]{BL2025})
\begin{align}\label{normal-intpl-3}
	\| \nbhm -\nbhsm \|_{L^p(\Ghsm)}
	\lesssim \|\nabla_\Ghsm \ehm \|_{L^p(\Ghsm)}
	\quad\forall p\in[1,\infty]
	.
\end{align}

For later use, we define the orthogonal projectors
\[
N_*^m:=n_*^m(n_*^m)^\top,
\qquad
T_*^m:=I-N_*^m,
\qquad
\hat N_{h,*}^m:=\hat n_{h,*}^m(\hat n_{h,*}^m)^\top,
\qquad
\hat T_{h,*}^m:=I-\hat N_{h,*}^m,
\]
and
\[
\bar N_{h,*}^m
:=\frac{\bar n_{h,*}^m}{|\bar n_{h,*}^m|}
\left(\frac{\bar n_{h,*}^m}{|\bar n_{h,*}^m|}\right)^\top,
\qquad
\bar T_{h,*}^m:=I-\bar N_{h,*}^m 
\qquad
\bar N_{h}^m
:=\frac{\bar n_{h}^m}{|\bar n_{h}^m|}
\left(\frac{\bar n_{h}^m}{|\bar n_{h}^m|}\right)^\top,
\qquad
\bar T_{h}^m:=I-\bar N_{h}^m
.
\]
Note that unnormalized $I-\bar n_h^m(\bar n_h^m)^\top$ in \eqref{eq:Dziuk-stab} is not identical
to the normalized projector $\bar T_h^m$.

\subsection{Super-approximation of $\nbhsm$}
\label{subsec:edge-normal-expansion}

For notational simplicity, some symbols in this section are reused from the other parts of the paper. Their meanings here are local to this section and do not affect their use elsewhere.

The optimal convergence rate relies on a super-approximation property of the reversely weighted normal $\nbhsm$. We establish that property by a local Taylor expansion.
Let $\gamma$ be a sufficiently smooth arc-length parametrization of
the exact curve, centered at $s=0$. Write
\[
x=\gamma(0),
\qquad
t=t(0),
\qquad
n=n(0),
\qquad
\kappa=\kappa(0),
\]
and adopt the Frenet convention
\[
t_s=\kappa n,
\qquad
n_s=-\kappa t,
\qquad
n=\mathcal Rt,
\]
where $\mathcal R$ denotes counterclockwise rotation through $\pi/2$. Since
\[
\gamma'(0)=t,
\qquad
\gamma''(0)=\kappa n,
\qquad
\gamma'''(0)=\kappa_s n-\kappa^2t,
\]
Taylor expansion gives
\begin{align}
	\gamma(a)
	&=
	\gamma(0)
	+a t
	+\frac{a^2}{2}\kappa n
	+\frac{a^3}{6}
	\bigl(\kappa_s n-\kappa^2t\bigr)
	+O(a^4).
	\notag
\end{align}
Consequently, the chord vector $d:=\gamma(a)-\gamma(0)$ satisfies
\begin{align}
	d
	&=
	a t
	+\frac{a^2}{2}\kappa n
	+\frac{a^3}{6}
	\bigl(\kappa_s n-\kappa^2t\bigr)
	+O(a^4).
	\notag
\end{align}
Using $t\cdot n=0$ and $|t|=|n|=1$, we obtain
\begin{align*}
	|d|
	=
	|a|
	\sqrt{\left(1-\frac{\kappa^2a^2}{6}\right)^2
		+a^2\left(\frac{\kappa}{2}+\frac{\kappa_sa}{6}\right)^2
		+O(a^3)}
	=
	|a|\left(1-\frac{\kappa^2a^2}{24}\right)+O(a^4),
\end{align*}
and hence
\begin{align}\label{eq:d-a}
	\big| |d| - |a| \big| \leq C|a|^3.
\end{align}

Define the oriented chord tangent and its normal by
\[
\tau_d:={\rm sign}(a)\frac{d}{|d|},
\qquad
n_d:=\mathcal R\tau_d.
\]
Then
\begin{align}
	\tau_d
	&=
	\left( t
	+\frac{a}{2}\kappa n
	+\frac{a^2}{6}
	\bigl(\kappa_s n-\kappa^2t\bigr)
	+O(a^3)\right)
	\left(1+\frac{\kappa^2a^2}{24}+O(a^3)\right)
	\notag\\
	&=
	t+\frac{\kappa a}{2}n
	+a^2\left(
	\frac{\kappa_s}{6}n
	-\frac{\kappa^2}{8}t
	\right)
	+O(a^3),
	\notag
\end{align}
and therefore
\begin{align}
	n_d
	&=
	n-\frac{\kappa a}{2}t
	-a^2\left(
	\frac{\kappa_s}{6}t
	+\frac{\kappa^2}{8}n
	\right)
	+O(a^3).
	\notag
\end{align}

We now return to the finite element curve $\Ghsm$. Let
$x_j\in\mathcal N(\Ghsm)$ and choose the arc-length coordinate so that
\[
x_j=\gamma(0),
\qquad
x_{j-1}=\gamma(-a_-),
\qquad
x_{j+1}=\gamma(a_+),
\]
where $a_-,a_+>0$ and $a_-,a_+\simeq h$. If $n_-$ and $n_+$ denote the
oriented normals of the left and right chords, respectively, then
\begin{align}
	n_-
	&=
	n+\frac{\kappa a_-}{2}t
	-a_-^2\left(
	\frac{\kappa_s}{6}t
	+\frac{\kappa^2}{8}n
	\right)
	+O(a_-^3),
	\notag\\
	n_+
	&=
	n-\frac{\kappa a_+}{2}t
	-a_+^2\left(
	\frac{\kappa_s}{6}t
	+\frac{\kappa^2}{8}n
	\right)
	+O(a_+^3).
	\notag
\end{align}
Set
\[
\ell_-:=|x_j-x_{j-1}|,
\qquad
\ell_+:=|x_{j+1}-x_j|.
\]
By \eqref{eq:d-a}, $\ell_\pm=a_\pm+O(h^3)$. The reverse weighting in
\eqref{bar-n-hat-n} therefore cancels the first-order tangential terms and results in
\begin{align}
	\nbhsm(x_j)
	&=\frac{\ell_+ n_-+\ell_- n_+}{\ell_-+\ell_+}
	=\frac{a_+ n_-+a_- n_+}{a_-+a_+}+O(h^2)
	=n(x_j)+O(h^2),
	\label{eq:reverse-average-cancellation}
\end{align}
where the constant in $O(h^2)$ depends only on the geometric quantities of the underlying exact curve $\Gamma^m$.

Summarizing the results in Sections \ref{subsec:averaged-normal-basic} and \ref{subsec:edge-normal-expansion}, we have the following lemma.
\begin{lemma}\label{lemma:n_bar_app}
	For $0\leq m\leq l$, the averaged normals satisfy
	\begin{align}
		\| \nbhsm - I_h \nsm \|_{L^\infty(\Ghsm)}
		&\lesssim h^2,
		\notag\\
		\| \nbhm - I_h \nsm \|_{L^2(\Ghsm)}
		&\lesssim h^2 + \|\nabla_\Ghsm \ehm \|_{L^2(\Ghsm)},
		\notag\\
		\| \nbhsm - \hat n_{h,*}^m \|_{L^\infty(\Ghsm)}
		&\lesssim h.
		\label{eq:averaged-normal-approximation}
	\end{align}
\end{lemma}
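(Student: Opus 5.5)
All three estimates follow by comparing finite element functions through their nodal values. On $\Ghsm$ every element is a flat segment, and both $\nbhsm$ and $I_h\nsm$ are piecewise linear. Hence the $L^\infty$ norm of their difference equals the maximum of the nodal differences $|\nbhsm(p)-\nsm(p)|$ over $p\in\mathcal N(\Ghsm)$.

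\textbf{First estimate.} Every node $p$ of $\Ghsm$ lies on $\Gm$, so $\nsm(p)=n^m(p)$. The Taylor expansion \eqref{eq:reverse-average-cancellation} gives $\nbhsm(p)=n^m(p)+O(h^2)$ at each node. Two points need checking to make this uniform:
\begin{itemize}
\item the arc-length distances $a_\pm$ between consecutive nodes are comparable to $h$, with constants depending on $\kappa_l$, since the nodes are images of the reference nodes under $\hat X_{h,*}^m$, whose $W^{1,\infty}$ norm and that of its inverse are bounded by $\kappa_l$;
\item the weights $\ell_{h,*,\pm}^m$ in \eqref{bar-n-hat-n} are exactly the chord lengths $\ell_\pm=|x_{j\pm1}-x_j|$, because each element is flat and $|w_K|\,|K_{\rm f}^0|$ is its length.
\end{itemize}
By \eqref{eq:d-a}, $\ell_\pm=a_\pm+O(h^3)$. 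Replacing $\ell_\pm$ by $a_\pm$ in the weights therefore costs only $O(h^2)$, and the first-order tangential terms $\pm\kappa a_\mp a_\pm t/2$ cancel exactly. The $O(h^2)$ constants depend only on $\Gamma^m$ (curvature and its derivative, uniformly in $m$) and on $\kappa_l$. This proves the first bound.

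\textbf{Second estimate.} Split
\[
\nbhm - I_h\nsm=(\nbhm-\nbhsm)+(\nbhsm-I_h\nsm).
\]
The first term is bounded by \eqref{normal-intpl-3} with $p=2$. The second is bounded in $L^\infty$ by the first estimate, hence in $L^2$ by $|\Ghsm|^{1/2}h^2\lesssim h^2$.

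The only real work is in \eqref{normal-intpl-3}, which is quoted. If I had to verify it, I would interpolate $\Ghm$ and $\Ghsm$ through $\mathcal T^\theta$ (Section~\ref{subsec:norm-equivalence}) and differentiate the nodal formula in $\theta$. Both the one-sided normals and the length weights vary linearly with $\nabla\ehm$, with denominators $\ell_-+\ell_+\gtrsim h$ cancelling the $h$ factors. Lemma~\ref{lemma:norm-equiv} then transfers the norms.

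\textbf{Third estimate.} Write
\[
\nbhsm-\hat n_{h,*}^m=(\nbhsm-I_h\nsm)+(I_h\nsm-\nsm)+(\nsm-\hat n_{h,*}^m).
\]
The three terms are bounded respectively by the first estimate ($O(h^2)$), Lemma~\ref{lemma:Ih} applied to the smooth function $\nsm$ ($O(h^2)$, or $O(h)$ in $W^{1,\infty}$), and \eqref{normal-intpl} ($O(h)$). Together they give $O(h)$.

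\textbf{Main obstacle.} The delicate step is the uniformity of the Taylor cancellation in the first estimate. It needs nodes placed at non-uniform spacing $a_-\neq a_+$, with the $O(a^3)$ remainders controlled uniformly along the curve. It also needs the chord-length weights, rather than the arc lengths, not to destroy the cancellation; this is exactly what \eqref{eq:d-a} supplies.
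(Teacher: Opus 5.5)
Your proposal is correct and follows the paper's proof: the first bound comes from the nodal reverse-weighting expansion \eqref{eq:reverse-average-cancellation}, the second from the triangle inequality with \eqref{normal-intpl-3}, and the third from the triangle inequality with \eqref{normal-intpl}; your reduction to nodal values and your check that the weights are chord lengths make the first step explicit. In the third estimate you also write out the interpolation term $I_h n_*^m - n_*^m = O(h^2)$, a step the paper leaves implicit.
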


\begin{proof}
	The first estimate follows from the nodal expansion
	\eqref{eq:reverse-average-cancellation} and the definition of $I_hn_*^m$.
	Combining this estimate with \eqref{normal-intpl-3} gives the second one.
	The third estimate follows from the first estimate and
	\eqref{normal-intpl} by the triangle inequality.
\end{proof}

\subsection{Some geometric identities}
\label{section:geometry}

In the remainder of this section, we will identify all finite element functions as elements either in $S_h(\Ghsm)$ or $S_h(\Ghsm)^2$, via the canonical nodal vector identification described in Section \ref{subsec:parametric-finite-elements}.
We define $X^{m+1}:\Gamma^0\rightarrow\Gamma^{m+1}$ and $Y^{m+1}:= X^{m+1}\circ (X^m)^{-1} :\Gamma^m\rightarrow\Gamma^{m+1}$ to be the exact global and local flow maps along $-H(t)n(t)$, respectively.
Let the interface finite element function $X_{h,*}^{m+1}:\Ghsm\rightarrow \Gamma_{h,*}^{m+1}$ be the interpolation of the local flow, which is uniquely determined by the relation $$X_{h,*}^{m+1}(p) - \hat X_{h,*}^m(p) = Y^{m+1}(p) - {\rm id_\Gm}(p)\quad\forall p\in\mathcal N(\Ghsm)\subset\Gm .$$
Then, it follows that 
\begin{align}
	&X_{h,*}^{m+1} - \hat X_{h,*}^m =I_h \big((Y^{m+1} - {\rm id_\Gm}) \circ a^m|_\Ghsm \big) &&\mbox{on}\,\,\, \Ghsm , \label{eq:X-id1} \\
	&Y^{m+1} - {\rm id}_\Gm = \tau ( {{-H^m n^m }}  +  g^m )  &&\mbox{on}\,\,\,  \Gamma^m, \label{eq:X-id2}
\end{align}
where $g^m$ is a smooth correction from the Taylor expansion, satisfying the following $W^{1,\infty}$ estimate: 
\begin{align}\label{W1infty-g}
	\|g^m\|_{W^{1,\infty}(\Gamma^m)}\le C\tau . 
\end{align}


The local trajectory error and the projection error at time level $m$ are defined as $e_h^m := X_h^m -  X_{h,*}^m$ and $\ehm := X_h^m -  \hat X_{h,*}^m$, respectively.
According to \cite[Eqs. (3.12)--(3.13)]{BL2024}, we have the following nodal relation
\begin{align}\label{eq:geo_rel_1}
	\ehm = I_h\big[ (e_h^m\cdot \nsm)\nsm \big] + r_h^m ,
\end{align}
with $r_h^m := \ehm - I_h\big[ (e_h^m\cdot \nsm)\nsm \big]$ satisfying
\begin{align}\label{eq:geo_rel_2}
	|r_h^m| \lesssim |[I - \nsm\otimes \nsm ] e_h^m|^2
	\quad\mbox{at the nodes of $\Ghsm$}.	
\end{align}
$r_h^m$ can be interpreted as a quadratic remainder of the nodal orthogonal projection due to the presence of curvature.

Therefore, we deduce from \eqref{eq:X-id1}, \eqref{eq:X-id2} and \eqref{eq:geo_rel_1} that
\begin{align}\label{eq:geo_rel_3}
	\begin{aligned}
		X_h^{m+1} - X_h^m 
		&= \eM - \ehm + X_{h,*}^{m+1} - \hat X_{h,*}^m \\
		&= \eM - \ehm + \tau I_h\big((-H^m n^m + g^m) \circ a^m|_\Ghsm \big) ,
	\end{aligned}
\end{align}
at the finite element nodes in $\mathcal N(\Ghsm)$.
This relation helps us convert the numerical displacement $X_h^{m+1} - X_h^m$ to the error displacement $\eM - \ehm$.
Denote by $\delta_\tau X_h^m:=(X_h^{m+1}-X_h^m+\tau I_h\big((H^m n^m) \circ a^m|_\Ghsm \big))/\tau$ and $\delta_\tau \ehm:=(\eM-\ehm)/\tau$, and then \eqref{eq:geo_rel_3} becomes
\begin{align}\label{eq:geo_rel_31}
	\delta_\tau X_h^m = \delta_\tau \ehm +  I_h\big(g^m \circ a^m|_\Ghsm \big) .
\end{align}

The following geometric identities related to the projection error are well-known (cf. \cite[Eqs. (A.15)--(A.17)]{BL2024}): 
If we define $\rho_h^m := I_h(\Nsm (\hat X_{h,*}^{m + 1} -\hat X_{h,*}^{m})) - I_h((Y^{m + 1} - {\rm id}_\Gm)\circ a^m|_\Ghsm)\in [S_h(\Ghsm)]^2$, then at all finite element nodes in $\mathcal N(\Ghsm)$, it holds that
\begin{align}
	\Nsm (\hat X_{h,*}^{m + 1} -\hat X_{h,*}^{m})
	&= (Y^{m + 1} - {\rm id}_\Gm)\circ a^m|_\Ghsm + \rho_h^m  ,
	\label{eq:geo_rel_4}
	\\
	\mbox{where}\,\,\, | \rho_h^m| 
	&\le C_0 \tau^2 + C_0 |T_*^m (\hat X_{h,*}^{m + 1} -\hat X_{h,*}^{m})|^2
	, \label{eq:geo_rel_5}\\
	T_*^m (\hat X_{h,*}^{m + 1} -\hat X_{h,*}^{m})
	&= T_*^m (X_{h}^{m + 1} - X_{h}^{m})
	- T_*^m (N_*^{m+1}\circ\hat X_{h,*}^{m+1}-N_*^{m}\circ\hat X_{h,*}^{m})  \ehM
	. \label{eq:geo_rel_6}
\end{align}

The nodal orthogonality of $\hat e_h^m$ to $\Gamma^m$ implies that its tangential component away from the nodes is still of higher order. This can be quantified via the super-approximation estimates in Appendix \ref{sec:super}.
%
%

\subsection{The main theorem and the induction hypothesis}
\label{subsec:main-result-induction}


We assume that the initial triangulation $\mathcal T_h^0$ of $\Ghso$ is shape regular
\begin{align}\label{P0}
	\kappa_0 \leq C_{\rm sh} ,
\end{align}
 and quasi-uniform: There exist
constants $c_{\rm q},C_{\rm q}>0$, independent of $h$, such that
\begin{equation}
	c_{\rm q}h\leq |K|\leq C_{\rm q}h
	\qquad\forall\,K\in\mathcal T_h^0.
	\label{P1}
\end{equation}
The main convergence result can now be stated precisely.

\begin{theorem}[Convergence of the stabilized Dziuk method]
	\label{thm:main}
	Suppose that the flow map
	$X:\Gamma^0\times[0,T]\rightarrow\mathbb R^2$ of the curve-shortening
	flow and its inverse $X(\cdot,t)^{-1}:\Gamma(t)\rightarrow\Gamma^0$ are
	sufficiently smooth, uniformly for $t\in[0,T]$. Suppose further that the
	initial curve $\Gamma_h^0$ is closed, satisfies \eqref{P0}--\eqref{P1}, and obeys
	\[
	\|\hat e_h^0\|_{L^2(\hat\Gamma_{h,*}^0)}\leq c_0h^2,
	\]
	for a constant $c_0$ independent of $h$. Let $X_h^m$ be the finite element
	solution of \eqref{eq:Dziuk-stab}, with $X_h^0={\rm id}$ on $\Gamma_h^0$.
	For any fixed constants $0<c_1\leq c_2$, there exists $h_0>0$ such that,
	if
	\[
	c_1h^2\leq\tau\leq c_2h^2,
	\qquad h\leq h_0,
	\]
	then the piecewise linear finite element solution satisfies
	\begin{align}
		\label{eq:err_est_1}
		&\max_{1\le m\le \lfloor T/\tau\rfloor}
		\| \hat e_h^{m} \|_{L^2(\Ghsm)}^2
		+
		\sum\limits_{m = 1}^{\lfloor T/\tau\rfloor}
		\tau \| \nabla_{\Ghsm} \hat e_h^{m}\|_{L^2(\Ghsm)}^2
		\le Ch^{4}.
	\end{align}
	Here $C$ is independent of $h$ and $\tau$, but may depend on
	$c_0,c_1,c_2,c_{\rm q},C_{\rm q},C_{\rm sh},T$, and the exact solution.
\end{theorem}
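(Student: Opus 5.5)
The proof will be a mathematical induction on the time level $m$, carried out in the projection-error framework of \cite{BL2024,BL2025}.

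\emph{Induction hypothesis.} For some $l$ we assume
\[
\|\hat e_h^m\|_{L^2(\Ghsm)}\le h^{2-\epsilon_0}
\quad\text{and}\quad
\|\nabla_{\Ghsm}\hat e_h^m\|_{L^\infty(\Ghsm)}\le h^{\epsilon_0}
\qquad\text{for all } m\le l .
\]
Under these bounds the shape regularity constant $\kappa_l$ in \eqref{PP} stays bounded by $2C_{\rm sh}$. Lemma~\ref{lemma:norm-equiv} then applies between $\Ghm$, $\Ghsm$ and the intermediate curves. The base case follows from \eqref{P0}--\eqref{P1} and the assumption on $\hat e_h^0$. Closing the induction is routine once the energy estimate below is proved: the $L^2$ bound $Ch^2$, combined with inverse inequalities and $\tau\simeq h^2$, recovers the $W^{1,\infty}$ hypothesis with room to spare for $h\le h_0$.

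\emph{Error equation.} Subtract the consistency relation from the scheme \eqref{eq:Dziuk-stab}. We insert the interpolated exact flow $X_{h,*}^{m+1}$ from \eqref{eq:X-id1}--\eqref{eq:X-id2}, which gives an equation for $e_h^{m+1}$ tested with $\phi_h\in[S_h(\Ghm)]^2$.

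The consistency defect splits into three pieces:
\begin{itemize}[label={}]
\item the mass-lumping error;
\item the interpolation error of $-Hn=\Delta_\Gamma{\rm id}$ on $\Ghsm$;
\item the stabilization term evaluated at the exact data.
\end{itemize}
By Lemma~\ref{lemma:n_bar_app}, the super-approximation $\nbhsm=I_hn_*^m+O(h^2)$ makes the stabilization defect $O(h^2)$ in the dual norm $H^{-1}_h$. This is the precise place where the reverse weighting is needed: a plain average would only give $O(h)$. The other defects are $O(h^2)$ by Lemma~\ref{lemma:Ih}.

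Using \eqref{eq:geo_rel_1}--\eqref{eq:geo_rel_6}, we then convert $e_h$ to the projection error $\hat e_h$. The tangential parts of the displacement appear through \eqref{eq:geo_rel_6} and \eqref{eq:geo_rel_31}.

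\emph{Energy estimate.} Test with $\phi_h=\hat e_h^{m+1}$ realized on $\Ghm$. Because $\hat e_h$ is nodally normal, the super-approximation estimates of Appendix~\ref{sec:super} make its tangential part away from nodes higher order. This yields the discrete $H^1$ parabolicity decomposition, in which the normal component of $\hat e_h$ is controlled by the stiffness form. The standard velocity estimates then bound $\|\delta_\tau \hat e_h^m\|$ in terms of $\|\nabla\hat e_h^{m+1}\|$ plus consistency.

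The nonlinear geometric terms are the differences of stiffness forms on $\Ghm$ versus $\Ghsm$, handled through Item~7 of Lemma~\ref{lemma:ud} and \eqref{normal-intpl-2}--\eqref{normal-intpl-3}. They have the form $\|\nabla\hat e_h\|_{L^\infty}\|\nabla\hat e_h\|_{L^2}^2$ plus terms multiplied by the tangential displacement $T_*^m(X_h^{m+1}-X_h^m)$.

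\emph{Main obstacle: tangential stability.} The hardest step is the tangential stability \eqref{eq:tan_stab_H1_e}, i.e.\ bounding $\|\bar T_h^m\,\delta_\tau X_h^m\|_{L^2_h}$ by $C(h^2+\|\nabla\hat e_h\|)$ rather than with a loss of $h^{-1}$. The plan is as follows.

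\begin{enumerate}
\item Test the scheme with $\phi_h=\tau\, I_h[\bar T_h^m\delta_\tau X_h^m]$, a purely tangential nodal field relative to $\bar n_h^m$.
\item For this test function the stabilization right-hand side reproduces the full stiffness term of ${\rm id}$, up to the unnormalized versus normalized discrepancy \eqref{eq:nhs_bar_len}, which is quadratic.
\item Consequently the left-hand side combines into the mass term $\tau^{-1}\cdot$lumped norm plus the stiffness form of the displacement, both nonnegative, and the right-hand side has only consistency terms.
\item An inverse estimate gives $\|\nabla\phi_h\|\lesssim h^{-1}\|\phi_h\|$. Under $\tau\simeq h^2$ the mass and stiffness forms are comparable, which is why the scaling $\tau\simeq h^2$ is required.
\item Cross terms from the variation of $\bar n_h^m$ between neighbouring nodes are absorbed using the $O(h^2)$ super-approximation of Lemma~\ref{lemma:n_bar_app}.
\end{enumerate}

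\emph{Conclusion.} With the tangential bound inserted into \eqref{eq:geo_rel_5}--\eqref{eq:geo_rel_6}, the remainder $\rho_h^m$ becomes $O(\tau^2+\tau^2 h^4)$. We sum the energy inequality over $m$ and apply discrete Gronwall. This gives \eqref{eq:err_est_1} for $m\le l+1$, which closes the induction.
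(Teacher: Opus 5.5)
Your induction does not close, because the step you treat as automatic (uniform shape regularity) does not follow from your hypothesis. You assert that the bounds on $\hat e_h^m$ keep $\kappa_l\le 2C_{\rm sh}$. However, $\kappa_l$ in \eqref{PP} measures $\hat X_{h,*}^m$ and its inverse relative to the flat reference curve $\Ghso$, i.e.\ how the projected nodes $a^m(x_j^m)$ are distributed along $\Gm$. The projection error is nodally normal to $\Gm$ and cannot see nodes sliding tangentially along the curve: nodes may cluster while $\hat e_h^m$ stays small. (Even the exact flow rescales length elements by $T$-dependent factors, so no bound like $2C_{\rm sh}$ holds without a Gr\"onwall argument.) Every constant in your consistency, stability, norm-equivalence and inverse estimates is a $C_{\kappa_l}$, and there are $\sim h^{-2}$ steps. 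You must therefore prove $\kappa_{l+1}\le C_0$ with $C_0$ independent of $l$ and $\kappa_l$; otherwise $h_0$ depends on $l$. This is the content of Section~\ref{sec:bbd}:
\begin{itemize}
\item From \eqref{eq:geo_rel_4}--\eqref{eq:geo_rel_6} one gets the per-step bound \eqref{eq:X-hat-diff-W1inf} on $\Ghso$. Its only term not of the form $C_0\tau(1+\|\hat X_{h,*}^m\|_{W^{1,\infty}(\Ghso)})$ is $C_0\tau\|I_h\Tsm\delta_\tau\ehm\|_{W^{1,\infty}}$.
\item Summed over $m$ and bounded by $h^{-1/2}$ times \eqref{eq:tan_stab_H1_e2}, this leaves sums of $\tau h^{-3/2}\|\nabla_{\Ghsm}\ehm\|_{L^2}$ and $\tau h^{-3}\|\nabla_{\Ghsm}\ehm\|_{L^2}^2$.
\item These sums are $O(h^{1/2})$ only because of the already established energy bound \eqref{eq:err_fin1}.
\item A discrete Gr\"onwall argument, plus the analogous one for $(\hat X_{h,*}^{m+1})^{-1}$, then gives $\kappa_{l+1}\le C_0$.
\end{itemize}
This, not the size of $\rho_h^m$, is what tangential stability is for. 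It is also why the explicit $\|\nabla\hat e_h\|_{L^2}$-dependence must be kept and summed in time: with only $\|\nabla_{\Ghsm}\ehm\|_{L^\infty}\le h^{\epsilon_0}$, the summed tangential contribution diverges.

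Your tangential step also contains two claims that need correcting.

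First, testing with $I_h[\bar T_h^m\delta_\tau X_h^m]$ instead of the paper's $I_h[\Tsm\delta_\tau X_h^m]$ is a workable variant. It removes the stabilization term exactly, whereas the paper removes the lumped mass term $\mathscr L_1^m$. But the left-hand side is not a sum of nonnegative terms. The term $\int_{\Ghm}\nabla_{\Ghm}\delta_\tau X_h^m\cdot\nabla_{\Ghm}I_h[\bar T_h^m\delta_\tau X_h^m]$ contains a cross term with $I_h[\bar N_h^m\delta_\tau X_h^m]$. That cross term must be bounded as in Lemma~\ref{lemma:NT_stab_ref}, with extra care since $\bar n_h^m$ is not smooth (cf.\ \eqref{eq:n_bar_bbd}), and then closed with the full velocity estimate \eqref{eq:vel_N}. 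This is the source of the factor $(1+h^{-3/2}\|\nabla_{\Ghsm}\ehm\|_{L^2})\|I_h\Nsm\delta_\tau\ehm\|_{L^2}$ in \eqref{eq:tan_stab_H1_e}.

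Second, you locate the indispensable use of the reverse weighting in the consistency error, but a plain arithmetic average would also suffice there. In the notation of Section~\ref{subsec:edge-normal-expansion}, its nodal tangential defect times the lumped mass is $\frac{\kappa}{8}(a_-^2-a_+^2)+O(h^3)$. This telescopes under summation by parts and is $O(h^2)\|\phi_h\|_{W^{1,1}}$. The nodal $O(h^2)$ bound of Lemma~\ref{lemma:n_bar_app} is genuinely needed in the tangential step. In your version it enters through $\tau^{-1}\int_{\Ghm}^h I_h(H^mn^m)\cdot I_h[\bar T_h^m\delta_\tau X_h^m]$, which you file under ``consistency''. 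There the factor $\tau^{-1}\simeq h^{-2}$ forces a bound against the $L^2$ norm of the tangential velocity. A dual-norm $O(h^2)$ bound would give only $\|\nabla_{\Ghsm}I_h[\bar T_h^m\delta_\tau X_h^m]\|_{L^2}\lesssim 1$, which is useless for the shape-regularity sum.

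Finally, you must convert $\bar T_h^m$ back to $\Tsm$ before \eqref{eq:geo_rel_6} can be used.
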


\begin{remark}\label{rmk:thm}\upshape
	The upper step-size bound $\tau\leq c_2h^2$ is used in the
	shape-regularity analysis; see \eqref{eq:X-hat-diff-triangle}. The lower
	bound $\tau\geq c_1h^2$ is needed to control the negative power of
	$\tau$ in \eqref{eq:tan_stab_H1_e}.
\end{remark}

The proof is organized as a continuation argument. Let
$M:=\lfloor T/\tau\rfloor$ and fix $l\in\{0,\ldots,M-1\}$. We assume that
the following estimate holds for every $m=0,\ldots,l$ and prove that it
remains valid at $m=l+1$:
\begin{align}
	&\| \ehm \|_{L^2(\Ghsm)} + \| e_h^m \|_{L^2(\Ghsm)}
	+h^{1/2} \big(\| \ehm \|_{L^\infty(\Ghsm)}
	+ \| e_h^m \|_{L^\infty(\Ghsm)}\big)
	\notag\\
	&\quad
	+ h\big(\| \ehm \|_{H^1(\Ghsm)}
	+ \| e_h^m \|_{H^1(\Ghsm)}\big)
	+ h^{3/2}\big(\| \ehm \|_{W^{1,\infty}(\Ghsm)}
	+ \| e_h^m \|_{W^{1,\infty}(\Ghsm)}\big)
	\leq h^{7/4},
	\label{eq:ind_hypo1}
\end{align}
%
with the convention $e_h^0 := \hat e_h^0$.
When $e_h^m$ is realized on $\Ghsm$ in \eqref{eq:ind_hypo1}, the
realization is understood in the sense of the canonical nodal identification.

In particular, \eqref{eq:ind_hypo1} yields
\begin{align}
	\|\nabla_{\Ghsm}\ehm\|_{L^\infty(\Ghsm)}\leq h^{1/4}. \notag
\end{align}
We define the linearly interpolated curves
\[
\hat\Gamma_{h,\theta}^m
:=({\rm id}+\theta\ehm)(\Ghsm)
=(1-\theta)\Ghsm+\theta\Ghm,
\]
with the transition maps
$({\rm id}+\theta\ehm):\Ghsm\rightarrow\hat\Gamma_{h,\theta}^m$.
For sufficiently small $h$, applying
Lemma~\ref{lemma:norm-equiv} to this family shows that the $L^p$ and
$W^{1,p}$ norms of finite element functions with a common nodal vector are
uniformly equivalent on $\Ghsm$, $\hat\Gamma_{h,\theta}^m$, and $\Ghm$.

Throughout the remainder of the paper, $C$ denotes a generic positive
constant that is independent of $h$, $\tau$, and $m$, but may initially
depend on $\kappa_l$, $T$, and the smooth exact solution. We write
$C_{\kappa_l}$ when this dependence is relevant and use $C_0$ for constants
independent of $\kappa_l$. The notation $A\lesssim B$ means $A\leq CB$.

Lemma~\ref{lemma:n_bar_app}, the smoothness of $n_*^m$, inverse estimates,
	and the induction hypothesis (Eq. \eqref{eq:ind_hypo1}) imply
	\begin{align}\label{eq:n_bar_bbd}
		\begin{split}
			\| \nbhsm \|_{W^{1,\infty}(\Ghsm)}
			&\lesssim 1+h\lesssim 1,
			\\
			\| \nbhm \|_{L^\infty(\Ghsm)}
			&\lesssim 1+h^2
			+h^{-1/2}\|\nabla_\Ghsm\ehm\|_{L^2(\Ghsm)}
			\lesssim 1,
			\\
			\| \nbhm \|_{W^{1,\infty}(\Ghsm)}
			&\lesssim 1+h
			+h^{-3/2}\|\nabla_\Ghsm\ehm\|_{L^2(\Ghsm)}
			\lesssim 1+h^{-3/4}.
		\end{split}
	\end{align}

\section{Consistency analysis}\label{sec:cons_err}

We insert the interpolated exact one-step flow into the discrete scheme and estimate the re-
sulting residual. The time discretization, geometric perturbation, quadrature, and stabilization
contributions are treated separately.

\subsection{Geometry perturbation estimates}

The following are the standard geometry perturbation estimates (cf. \cite[Lemma 5.6]{Kov18}) which are helpful in dealing with the consistency errors.
\begin{lemma}\label{lemma:geo-pert}
	Given $1/p+1/q=1$, the following estimates hold{\rm:}
	\begin{align*}
		\Big|\int_{\Ghsm} f_1f_2 - \int_{\Gm} f_1^\ell f_2^\ell \Big| 
		&\le C_{\kappa_l} h^{2} \|f_1\|_{ L^p(\Ghsm)}\|f_2\|_{L^q(\Ghsm)},\\
		\Big|\int_{\Ghsm}\nabla_{\Ghsm} f_1\cdot\nabla_{\Ghsm} f_2 - \int_{\Gm}\nabla_{\Gm} f_1^\ell\cdot \nabla_{\Gm} f_2^\ell \Big|& \le C_{\kappa_l} h^{2}\|\nabla_{\Ghsm}f_1\|_{ L^p(\Ghsm)}\|\nabla_{\Ghsm}f_2\|_{L^q(\Ghsm)} , \\
		\Big|\int_{\Ghsm}\nabla_{\Ghsm} f_1\cdot f_2 - \int_{\Gm}\nabla_{\Gm} f_1^\ell\cdot f_2^\ell \Big|& \le C_{\kappa_l} h\|\nabla_{\Ghsm}f_1\|_{ L^p(\Ghsm)}\| f_2\|_{L^q(\Ghsm)} . 
	\end{align*}
\end{lemma}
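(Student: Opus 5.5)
We prove the three estimates of Lemma~\ref{lemma:geo-pert} elementwise by pulling each integral over $\Gm$ back to $\Ghsm$ through the lift $a^m|_{\Ghsm}$, and we write the difference of integrands in terms of the surface Jacobian and the derivative of the lift. On each element $K\subset\Ghsm$ the change of variables gives
\[
\int_{a^m(K)} f_1^\ell f_2^\ell = \int_K f_1 f_2\,\mu_h,
\qquad
\nabla_{\Gm} f^\ell\circ a^m = B_h\,\nabla_{\Ghsm} f ,
\]
where $\mu_h$ is the ratio of the surface measures and $B_h$ is the tangential matrix built from $(\nabla_{\Ghsm}a^m)$ and its pseudo-inverse. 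We then have
\[
\int_{\Ghsm} f_1f_2 - \int_{\Gm} f_1^\ell f_2^\ell = \int_{\Ghsm}(1-\mu_h)f_1f_2 ,
\]
and similarly
\[
\int_{\Ghsm}\nabla f_1\cdot\nabla f_2 - \int_{\Gm}\nabla f_1^\ell\cdot\nabla f_2^\ell = \int_{\Ghsm}\nabla_{\Ghsm}f_1\cdot(I-\mathcal A_h)\nabla_{\Ghsm}f_2 ,
\]
with $\mathcal A_h=\mu_h B_h^\top B_h$ restricted to the tangent line. By H\"older's inequality, everything reduces to pointwise bounds
\[
\|1-\mu_h\|_{L^\infty}\lesssim h^2,
\qquad
\|\hat T_{h,*}^m(I-\mathcal A_h)\hat T_{h,*}^m\|_{L^\infty}\lesssim h^2,
\qquad
\|B_h-\hat T_{h,*}^m\|_{L^\infty}\lesssim h ,
\]
with constants depending on $\kappa_l$.

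These bounds come from writing $a^m(x)=x-d(x)n_*^m(x)$ with $d$ the signed distance. This gives
\[
\nabla_{\Ghsm}a^m=\hat T_{h,*}^m\bigl(I-dH\bigr)\bigl(I-n_*^m\otimes n_*^m\bigr)
\]
up to tangential restriction, where $H$ is the Weingarten map. Section~\ref{section:interpolated} gives $\|d\|_{L^\infty(\Ghsm)}\le C_{\kappa_l}h^2$ and, by \eqref{normal-intpl}, $|\hat n_{h,*}^m-n_*^m|\le C_{\kappa_l}h$. For a curve the tangent line is one-dimensional, so
\[
\mu_h=(1-d\,H^m_*)\,\bigl(\hat n_{h,*}^m\cdot n_*^m\bigr) .
\]
Since $1-\hat n_{h,*}^m\cdot n_*^m=\tfrac12|\hat n_{h,*}^m-n_*^m|^2=O(h^2)$ and $dH^m_*=O(h^2)$, we obtain $|1-\mu_h|\lesssim h^2$. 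The matrix $\mathcal A_h$ on the tangent line is the scalar $\mu_h^{-1}(1-dH_*^m)^{-2}\cdot(\text{cosine})^2$-type factor. Each factor is $1+O(h^2)$, so $I-\mathcal A_h=O(h^2)$ on tangent vectors, which gives the second estimate.

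The third estimate, with $f_2$ an undifferentiated vector field, only sees $B_h$ itself. Because
\[
\nabla_{\Gm}f_1^\ell\circ a^m=B_h\nabla_{\Ghsm}f_1 ,
\]
the difference of integrands is
\[
\bigl(\hat T_{h,*}^m-\mu_h B_h\bigr)\nabla_{\Ghsm}f_1\cdot f_2 .
\]
The vector $B_h\nabla f$ lies in the tangent line of $\Gm$, while $\nabla_{\Ghsm}f$ lies in that of $\Ghsm$. These lines differ by an angle of order $|\hat n_{h,*}^m-n_*^m|=O(h)$. Only $O(h)$ survives here, and this is exactly the stated bound.

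The main obstacle is the bookkeeping at the first step. We must verify that the constants depend only on $\kappa_l$ and the exact curve, via Lemma~\ref{lemma:Ih} applied to $a^m\circ\hat X_{h,*}^m-\hat X_{h,*}^m$. We must also be careful that the normal error enters $\mu_h$ and $\mathcal A_h$ only quadratically; this is what gives $h^2$ rather than $h$ in the first two bounds.
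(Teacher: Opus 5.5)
Your proposal is correct and is the standard argument behind the paper's citation of \cite[Lemma 5.6]{Kov18}; the paper gives no proof of its own. That argument is the same as yours: pull everything back to $\Ghsm$, use $|1-\mu_h|\lesssim h^2$, $\hat T_{h,*}^m(I-\mathcal A_h)\hat T_{h,*}^m=O(h^2)$ and $\hat T_{h,*}^m-\mu_hB_h=O(h)$, and get the $L^p$--$L^q$ form from H\"older. One small correction: since $B_h$ maps the unit tangent of $\Ghsm$ to $\mu_h^{-1}$ times the unit tangent of $\Gm$, one has exactly $\mathcal A_h=\mu_h^{-1}$ on the tangent line, not the factor you wrote, though your conclusion $\mathcal A_h=1+O(h^2)$ is unaffected.
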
 

\subsection{Definition of consistency errors}

The consistency error $\mathscr D^m(\cdot)$ associated with the scheme \eqref{eq:Dziuk-stab} is defined as a linear functional on $[S_h(\Ghsm)]^2$: 
\begin{align}\label{eq:cons_eq}
\mathscr D^m(\phi_h) &:= \int_{\hat\Gamma_{h,*}^{m}}^h \frac{X_{h,*}^{m+1}-{\rm id}}{\tau} \cdot \phi_h
+ \int_{\hat\Gamma_{h,*}^{m}} \nabla_{\hat\Gamma_{h,*}^m} X_{h,*}^{m+1} \cdot \nabla_{\hat\Gamma_{h,*}^m} \phi_h  \notag \\
&\quad- \int_{\hat\Gamma_{h,*}^{m}} \nabla_{\hat\Gamma_{h,*}^m} \hat X_{h,*}^{m} \cdot \nabla_{\hat\Gamma_{h,*}^m} I_h[ (I -\bar n_{h,*}^m (\bar n_{h,*}^m)^\top ) \phi_h] \notag\\
&= \int_{\hat\Gamma_{h,*}^{m}}^h \frac{X_{h,*}^{m+1}-{\rm id}}{\tau} \cdot \phi_h
+ \int_{\Gm} \Hm\nm \cdot \phi_h^\ell  \notag\\
&\quad- \int_{\Gm} \nabla_\Gm {\rm id} \cdot \nabla_\Gm \phi_h^\ell
+ \int_{\hat\Gamma_{h,*}^{m}} \nabla_{\hat\Gamma_{h,*}^m} X_{h,*}^{m+1} \cdot \nabla_{\hat\Gamma_{h,*}^m} \phi_h  \notag\\
&\quad- \int_{\hat\Gamma_{h,*}^{m}} \nabla_{\hat\Gamma_{h,*}^m} \hat X_{h,*}^{m} \cdot \nabla_{\hat\Gamma_{h,*}^m} I_h[ (I -\bar n_{h,*}^m (\bar n_{h,*}^m)^\top ) \phi_h]  \notag\\
&=: \mathscr D_1^m(\phi_h) + \mathscr D_2^m(\phi_h) +\mathscr D_3^m(\phi_h) ,
\end{align}
where we have used the identity 
$\displaystyle \int_{\Gm} \nabla_\Gm {\rm id} \cdot \nabla_\Gm \phi_h^\ell = \int_{\Gm} \Hm\nm \cdot \phi_h^\ell $. 

\subsection{Estimates for consistency errors}

The following lemma gives the estimate for the consistency error defined in \eqref{eq:cons_eq}.
\begin{lemma}\label{proposition-consistency}
The consistency error $\mathscr D^m(\cdot)$ satisfies the following estimate: 
\begin{align}
	| \mathscr D^m(\phi_h)| &\lesssim \tau \| \phi_h \|_{L^2(\Ghsm)} + h^{2} \| \phi_h \|_{H^1(\Ghsm)} 
	\quad\forall\,\phi_h\in [S_h(\Ghsm)]^2 .  \notag
\end{align}
\end{lemma}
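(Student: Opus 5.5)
We estimate the three pieces $\mathscr D_1^m,\mathscr D_2^m,\mathscr D_3^m$ of \eqref{eq:cons_eq} separately. Each is a difference between a discrete functional on $\Ghsm$ and its continuous counterpart on $\Gm$, so the tools are the geometric perturbation estimates (Lemma~\ref{lemma:geo-pert}), the interpolation estimates (Lemma~\ref{lemma:Ih}), the identities \eqref{eq:X-id1}--\eqref{W1infty-g}, and the super-approximation of $\nbhsm$ (Lemma~\ref{lemma:n_bar_app}).

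\textbf{The term $\mathscr D_1^m$.} By \eqref{eq:X-id1}--\eqref{eq:X-id2}, $(X_{h,*}^{m+1}-{\rm id})/\tau=I_h[(-H^mn^m+g^m)\circ a^m]$ on $\Ghsm$. The $g^m$ part is bounded by $\tau\|\phi_h\|_{L^2}$ because of \eqref{W1infty-g} and the equivalence of lumped and exact $L^2$ norms. For the remaining part we write
\[
\int^h_{\Ghsm} I_h(-H_*^mn_*^m)\cdot\phi_h=\int_{\Ghsm} (-H_*^mn_*^m)\cdot\phi_h+\text{(lumping and interpolation errors)},
\]
and then lift this integral to $\Gm$ using the first estimate of Lemma~\ref{lemma:geo-pert}, with an $O(h^2)\|\phi_h\|_{L^2}$ error.

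The mass-lumping error on each element is bounded by $h^2|f\phi_h|_{W^{2,1}_h(K)}$. Since $\phi_h$ is linear on $K$, this reduces to $h^2\|\phi_h\|_{W^{1,1}(K)}$ times derivatives of the smooth factor, giving $h^2\|\phi_h\|_{H^1}$. The interpolation error $(1-I_h)(H_*^mn_*^m)$ is $O(h^2)$ in $L^\infty$ by Lemma~\ref{lemma:Ih}. After this reduction, $\mathscr D_1^m$ cancels the term $\int_{\Gm}H^mn^m\cdot\phi_h^\ell$ up to $O(\tau\|\phi_h\|_{L^2}+h^2\|\phi_h\|_{H^1})$.

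\textbf{The term $\mathscr D_2^m$.} We write $X_{h,*}^{m+1}=\hat X_{h,*}^m+\tau I_h[(-H^mn^m+g^m)\circ a^m]$. The $\tau$-part of the stiffness term is bounded by $\tau\|\phi_h\|_{H^1}\lesssim h^2\|\phi_h\|_{H^1}$, using $\tau\simeq h^2$ and the $W^{1,\infty}$-stability of $I_h$ on smooth functions. The part $\int_{\Ghsm}\nabla\hat X_{h,*}^m\cdot\nabla\phi_h$ is $\int_{\Ghsm}\nabla{\rm id}\cdot\nabla\phi_h$. Comparing it with $\int_{\Gm}\nabla_{\Gm}{\rm id}\cdot\nabla_{\Gm}\phi_h^\ell$ by the second estimate of Lemma~\ref{lemma:geo-pert} is not immediate, since ${\rm id}_{\Ghsm}$ is not the inverse lift of ${\rm id}_{\Gm}$. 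Their difference is $({\rm id}-a^m)$, which vanishes at the nodes, so it is $O(h^2)$ in $L^\infty$ but only $O(h)$ in $W^{1,\infty}$.

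The cleaner route is to leave this term inside $\mathscr D_3^m$ and combine it there, since
\[
\mathscr D_3^m \ni -\int_{\Ghsm}\nabla{\rm id}\cdot\nabla I_h[(I-\bar n\bar n^\top)\phi_h],
\]
and the difference of the two terms is $\int_{\Ghsm}\nabla{\rm id}\cdot\nabla I_h[(\bar n_{h,*}^m\cdot\phi_h)\bar n_{h,*}^m]$.

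\textbf{The main obstacle: the combined normal term.} On each flat element, $\nabla_{\Ghsm}{\rm id}=\hat T_{h,*}^m$, so the combined term equals $\int_{\Ghsm}\hat T_{h,*}^m:\nabla_{\Ghsm}\psi_h$ with $\psi_h=I_h[(\bar n_{h,*}^m\cdot\phi_h)\bar n_{h,*}^m]$. Summing by parts elementwise on a polygon, this becomes a sum of nodal jumps of the tangent,
\[
\sum_{p}\psi_h(p)\cdot\big(\hat t(p-)-\hat t(p+)\big),
\]
a discrete curvature term with jumps of size $O(h)$ per node. The target is to show this equals $\int_{\Gm}\nabla{\rm id}\cdot\nabla\phi_h^\ell-\int_{\Gm}H^mn^m\cdot\phi_h^\ell=0$ up to $O(h^2)\|\phi_h\|_{H^1}$.

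The plan is to use the exact identity: the tangent jump at $p$ equals the jump of the normals rotated. By the Taylor expansions of Section~\ref{subsec:edge-normal-expansion},
\[
\hat t(p-)-\hat t(p+)=-\kappa\,\tfrac{a_-+a_+}{2}\,n+O(h^2)\,t+O(h^3).
\]
The reverse weighting makes $\bar n_{h,*}^m(p)=n(p)+O(h^2)$, so $\psi_h(p)=(\phi_h(p)\cdot n)n+O(h^2)|\phi_h(p)|$. The nodal sum then becomes a lumped quadrature of $-\kappa\,(\phi_h\cdot n)$, which matches $\int_{\Gm}H^mn^m\cdot\phi_h^\ell$ up to quadrature errors of size $h^2\|\phi_h\|_{H^1}$, handled as in $\mathscr D_1^m$.

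The delicate point is that the $O(h^2)t$ term in the jump, paired with the $O(1)$ normal part of $\psi_h$, vanishes by $t\cdot n=0$. The $O(h^2)$ error in $\bar n$ is multiplied by the $O(h)$ jump, which sums to $O(h^2)\|\phi_h\|_{L^1}$.

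The remaining tangential part of $\phi_h$ is exactly the Dziuk stiffness term on $\Ghsm$ versus $\Gm$. It must be shown to produce the tangential consistency $\int_{\Gm}\nabla{\rm id}\cdot\nabla(T\phi)^\ell\approx 0$, again via the nodal-sum representation. There the tangential jump contributes $O(h^2)$ per node times $\phi_h$, and after summation over $O(h^{-1})$ nodes this is controlled by $h\|\phi_h\|_{L^\infty}$. This is where an $H^1$ norm and a discrete summation by parts, using $a_+-a_-$ differences, are needed to gain to $h^2\|\phi_h\|_{H^1}$. I expect this step to be the hardest, and would first try pairing consecutive nodal terms so that differences $\phi_h(p_{j+1})-\phi_h(p_j)$ appear.
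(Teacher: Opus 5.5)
Your treatment of $\mathscr D_1^m$ is fine and matches the paper. The argument for the stiffness and stabilization parts, however, has a genuine gap.

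\textbf{The tangent-jump expansion is wrong.} Your combined normal term rests on $\hat t(p-)-\hat t(p+)=-\kappa\frac{a_-+a_+}{2}n+O(h^2)t+O(h^3)$. The chord expansion of Section~\ref{subsec:edge-normal-expansion} actually gives
\[
\hat t(p-)-\hat t(p+)=-\kappa\,\tfrac{a_-+a_+}{2}\,n+(a_-^2-a_+^2)\Big(\tfrac{\kappa_s}{6}n-\tfrac{\kappa^2}{8}t\Big)+O(h^3),
\]
so there is an $O(h^2)$ \emph{normal} component. Paired with $(\phi_h\cdot n)n$, it contributes $O(h^2)|\phi_h(p)|$ per node. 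After summation this gives only $O(h)\|\phi_h\|_{L^1}$ when element lengths do not vary smoothly, e.g.\ alternating lengths.

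To reach $h^2$ you need a discrete summation by parts. Writing $a_{j+1/2}$ for the length of $[p_j,p_{j+1}]$, use $\sum_j(a_{j-1/2}^2-a_{j+1/2}^2)f(p_j)=\sum_j a_{j+1/2}^2(f(p_{j+1})-f(p_j))$, which gives $h^2\|\phi_h\|_{W^{1,1}}$. You only sketch this at the end. You also attach it to a ``remaining tangential part'' that does not exist in your route: after combining with $\mathscr D_3^m$, $\psi_h$ is normal up to $O(h^2)$, and on $\Gamma^m$ the tangential part of $\phi_h$ contributes exactly zero. So the key estimate is unproved as written. With the summation by parts applied to the normal term, together with $\bar n_{h,*}^m=n+O(h^2)$ and your lumped-quadrature comparison, the nodal-sum route does close.

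\textbf{The obstacle that motivated the detour is not real.} For $v_h\in[S_h(\hat\Gamma_{h,*}^m)]^2$, $\nabla_K v_h$ is constant on each flat element $K$. Also $(1-I_h)a^m$ vanishes at both endpoints of $K$. Hence $\int_K\nabla_K(1-I_h)a^m\cdot\nabla_K v_h=0$. Since ${\rm id}=\hat X_{h,*}^m=I_ha^m$ on $\hat\Gamma_{h,*}^m$, this yields
\[
\int_{\hat\Gamma_{h,*}^m}\nabla{\rm id}\cdot\nabla v_h=\int_{\hat\Gamma_{h,*}^m}\nabla a^m\cdot\nabla v_h,\qquad a^m=({\rm id}_{\Gamma^m})^{-\ell}.
\]
Lemma~\ref{lemma:geo-pert} then applies with error $h^2\|\nabla v_h\|_{L^2}$, and the $O(h)$ size of $\nabla({\rm id}-a^m)$ never enters.

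The paper uses exactly this twice:
\begin{itemize}
\item with $v_h=\phi_h$, in $\mathscr D_{22}^m$ and $\mathscr D_{23}^m$ via Lemma~\ref{Lemma-GLW};
\item with $v_h=I_h[(I-\bar n_{h,*}^m(\bar n_{h,*}^m)^\top)\phi_h]$, in $\mathscr D_{32}^m$ and $\mathscr D_{33}^m=0$.
\end{itemize}
It then integrates by parts on $\Gamma^m$, reducing the stabilization term to $\int_{\Gamma^m}H^mn^m\cdot I_h[(I-\bar n_{h,*}^m(\bar n_{h,*}^m)^\top)\phi_h]^\ell$. This is $O(h^2)\|\phi_h\|_{H^1}$ for three reasons:
\begin{itemize}
\item $I-\bar n_{h,*}^m(\bar n_{h,*}^m)^\top=T_*^m+O(h^2)$ at the nodes, by Lemma~\ref{lemma:n_bar_app} and \eqref{eq:nhs_bar_len0};
\item $n^m\cdot T^m=0$;
\item $\|(1-I_h)(T_*^m\phi_h)\|_{L^2}\lesssim h^2\|\phi_h\|_{H^1}$.
\end{itemize}
This needs no expansion of tangent jumps and no summation by parts, i.e.\ none of the finite-difference machinery the paper avoids.

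Finally, your bound $\tau\|\phi_h\|_{H^1}$ for the $\tau$-part of $\mathscr D_2^m$ is acceptable only under the standing assumption $\tau\lesssim h^2$. The paper obtains $\tau\|\phi_h\|_{L^2}$ by integration by parts.
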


\begin{proof}
Using $\eqref{eq:X-id1}$, we decompose the first contribution on the right-hand side of \eqref{eq:cons_eq} into four terms:
\begin{align}
	\mathscr D_1^m(\phi_h) &= \int_{\hat\Gamma_{h,*}^{m}}^h \frac{X_{h,*}^{m+1}-{\rm id}}{\tau} \cdot \phi_h
	+ \int_{\Gm} \Hm\nm \cdot \phi_h^\ell  \notag\\
	&= \int_{\hat\Gamma_{h,*}^{m}}^h I_h\Big( \frac{Y^{m+1} - {\rm id}}{\tau} + H^m n^m \Big)^{-\ell}  \cdot \phi_h \notag\\
	&\quad- \int_{\hat\Gamma_{h,*}^{m}}^h (I_h(H^m n^m)^{-\ell} - (H^m n^m)^{-\ell} ) \cdot \phi_h \notag\\
	&\quad- \Big(\int_{\hat\Gamma_{h,*}^{m}}^h - \int_{\hat\Gamma_{h,*}^{m}} \Big) (H^m n^m)^{-\ell}  \cdot \phi_h \notag\\
	&\quad- \int_{\hat\Gamma_{h,*}^{m}} (H^m n^m)^{-\ell} \cdot \phi_h
	+ \int_{\Gm} H^m n^m \cdot \phi_h^\ell  \notag\\
	&=: \sum_{i=1}^4 \mathscr D_{1i}^m(\phi_h) . \notag
\end{align}
Here $\big(\int^h_{\hat\Gamma_{h,*}^{m}} - \int_{\hat\Gamma_{h,*}^{m}} \big) f = \int^h_{\hat\Gamma_{h,*}^{m}} f - \int_{\hat\Gamma_{h,*}^{m}} f$ abbreviates the quadrature error for any piecewise continuous function $f$ defined on $\hat\Gamma_{h,*}^{m}$.
Relations \eqref{eq:X-id2}--\eqref{W1infty-g} and the nodal identity give, respectively,
\begin{align*}
	| \mathscr D_{11}^m(\phi_h)|  &\lesssim \tau \| \phi_h \|_{L^2(\Ghsm)} ,\\
	| \mathscr D_{12}^m(\phi_h)|  &\lesssim h^2 \| \phi_h \|_{L^2(\Ghsm)} . 
\end{align*}
The superconvergence estimate in Lemma~\ref{lemma:super_conv2} implies
\begin{align*}
	| \mathscr D_{13}^m(\phi_h) | &\lesssim h^{2} \| (H^m n^m)^{-\ell} \|_{H^{2}_h(\Ghsm)} \| \phi_h \|_{H^1(\Ghsm)} 
	\lesssim h^{2}  \| \phi_h \|_{H^1(\Ghsm)} .
\end{align*}
The geometric perturbation estimate in Lemma~\ref{lemma:geo-pert}, together with norm equivalence, bounds the final term:
\begin{align*}
	| \mathscr D_{14}^m(\phi_h)| \lesssim h^{2} \| \phi_h \|_{L^2(\Ghsm)} .
\end{align*}
Combining the estimates of $\mathscr D_{1i}^m(\phi_h)$, $i=1,\dots,4$, yields
\begin{align*}
	| \mathscr D_{1}^m(\phi_h)| \lesssim \tau \| \phi_h \|_{L^2(\Ghsm)} + h^{2} \| \phi_h \|_{H^1(\Ghsm)} .
\end{align*}

Next, decompose $\mathscr D_2^{m}(\phi_h)$ from \eqref{eq:cons_eq} as
\begin{align}
	\mathscr D_2^{m}(\phi_h) 
	&= \int_{\Ghsm} \nabla_{\Ghsm} X_{h,*}^{m+1} \cdot \nabla_{\Ghsm} \phi_h - \int_{\Gm} \nabla_\Gm {\rm id} \cdot \nabla_\Gm \phi_h^\ell \notag\\
	&= \int_{\Ghsm} \nabla_{\Ghsm} (X_{h,*}^{m+1}-\hat X_{h,*}^m) \cdot \nabla_{\Ghsm} \phi_h  \notag\\
	&\quad+ \int_{\Ghsm} \nabla_{\Ghsm} \hat X_{h,*}^m \cdot \nabla_{\Ghsm} \phi_h - \int_{\Gm} \nabla_\Gm (\hat X_{h,*}^m)^\ell \cdot \nabla_\Gm \phi_h^\ell  \notag\\
	&\quad+ \int_{\Gm} \nabla_{\Gm} [(I_ha^m)^\ell - a^m ] \cdot \nabla_{\Gm} \phi_h^\ell \notag\\
	&= \mathscr D_{21}^{m}(\phi_h) + \mathscr D_{22}^{m}(\phi_h) + \mathscr D_{23}^{m}(\phi_h) , \notag
\end{align}
where the penultimate equality uses
$$(\hat X_{h,*}^m)^\ell= (I_ha^m)^\ell \quad\mbox{and}\quad {\rm id} = a^m  \,\,\,\mbox{on}\,\,\, \Gm . $$
Following an argument similar to that in \cite[Lemma 4.3]{BL2024}, with the help of the geometric relations \eqref{eq:X-id1}--\eqref{W1infty-g} and integration by parts, we have
\begin{align*}
	|\mathscr D_{21}^{m}(\phi_h)|
	&\lesssim
	\tau \| \phi_h \|_{L^2(\Ghsm)} .
\end{align*}
Lemma~\ref{lemma:geo-pert} implies
\begin{align*}
	|\mathscr D_{22}^{m}(\phi_h)|
	\lesssim h^{2} \| \phi_h \|_{H^1(\Ghsm)} ,
\end{align*}
and the super-approximation estimate (Lemma~\ref{Lemma-GLW}) gives
\begin{align*}
	|\mathscr D_{23}^{m}(\phi_h)|
	\lesssim h^{2} \| \phi_h \|_{H^1(\Ghsm)} .
\end{align*}
Combining the estimates of $\mathscr D_{2i}^{m}(\phi_h)$, $i=1,2,3$, yields
\begin{align*}
	|\mathscr D_{2}^{m}(\phi_h)|
	\lesssim \tau \| \phi_h \|_{L^2(\Ghsm)} + h^{2} \| \phi_h \|_{H^1(\Ghsm)} . 
\end{align*}

It remains to estimate the stabilization contribution. Decompose $\mathscr D_3^m$ as
\begin{align}
	\mathscr D_3^m(\phi_h)
	&=
	-
	\int_{\hat\Gamma_{h,*}^{m}} \nabla_{\hat\Gamma_{h,*}^m} I_h a^m \cdot \nabla_{\hat\Gamma_{h,*}^m} I_h[ (I -\bar n_{h,*}^m (\bar n_{h,*}^m)^\top ) \phi_h]  \notag\\
	&=
	- \int_\Gm \nabla_\Gm {\rm id}_\Gm \cdot \nabla_\Gm I_h[ (I -\bar n_{h,*}^m (\bar n_{h,*}^m)^\top ) \phi_h]^{\ell}  \notag\\
	&\quad
	-
	\bigg(\int_{\hat\Gamma_{h,*}^{m}} \nabla_{\hat\Gamma_{h,*}^m}  ({\rm id}_\Gm)^{-\ell} \cdot \nabla_{\hat\Gamma_{h,*}^m} I_h[ (I -\bar n_{h,*}^m (\bar n_{h,*}^m)^\top ) \phi_h] 
	\notag\\
	&\quad-
	\int_\Gm \nabla_\Gm {\rm id}_\Gm \cdot \nabla_\Gm I_h[ (I -\bar n_{h,*}^m (\bar n_{h,*}^m)^\top ) \phi_h]^{\ell}
	\bigg)
	\notag\\
	&\quad
	+
	\int_{\hat\Gamma_{h,*}^{m}} \nabla_{\hat\Gamma_{h,*}^m} (1-I_h) a^m \cdot \nabla_{\hat\Gamma_{h,*}^m} I_h[ (I -\bar n_{h,*}^m (\bar n_{h,*}^m)^\top ) \phi_h]  \notag\\
	&=: \mathscr D_{31}^m(\phi_h) +\mathscr D_{32}^m(\phi_h)+\mathscr D_{33}^m(\phi_h) . \notag
\end{align}
Using integration by parts, we have
\begin{align}
	|\mathscr D_{31}^m(\phi_h)|
	&= 
	\Big|\int_\Gm H^m n^m \cdot I_h[ (I -\bar n_{h,*}^m (\bar n_{h,*}^m)^\top ) \phi_h]^{\ell} \Big|
	\notag\\
	&= 
	\Big|\int_\Gm H^m n^m \cdot I_h [(\Tbhsm-\Tsm) \phi_h]^{\ell}
	-
	\int_\Gm H^m n^m \cdot (1-I_h) (\Tsm \phi_h)^{\ell}
	\notag\\
	&\quad
	+
	\int_\Gm H^m n^m \cdot I_h\Big[ \Big(\frac{\nbhsm (\nbhsm)^\top }{|\nbhsm|^2} -\nbhsm (\nbhsm)^\top \Big) \phi_h\Big]^{\ell} \Big|
	\notag\\
	&\lesssim
	h^2 \| \phi_h \|_{H^1(\Ghsm)} . \notag
\end{align}
The geometric perturbation estimate (Lemma~\ref{lemma:geo-pert}) and the boundedness \eqref{eq:n_bar_bbd} give
\begin{align}
	|\mathscr D_{32}^m(\phi_h)|
	&\lesssim
	h^2 \| \phi_h \|_{H^1(\Ghsm)} . \notag
\end{align}
Local integration by parts on each element of $\Ghsm$ and the vanishment of $(1-I_h)a^m$ on $\mathcal N(\Ghsm)$ show that $\mathscr D_{33}^m$ vanishes:
\begin{align}
	\mathscr D_{33}^m(\phi_h) = 0. \notag
\end{align}

Combining the estimates of $\mathscr D_1^m(\phi_h)$, $\mathscr D_2^m(\phi_h)$ and $\mathscr D_3^m(\phi_h)$ proves Lemma~\ref{proposition-consistency}. This residual bound is the consistency input for the energy argument below.
\hfill\end{proof}

\section{Stability analysis}
\label{sec:stab}

\subsection{The error equation and the discrete $H^1$ parabolicity decomposition}\label{sec:J_stab}

Subtracting the consistency relation \eqref{eq:cons_eq} from the numerical scheme \eqref{eq:Dziuk-stab} gives the error equation
\begin{align}\label{eq:err_eq1}
	&\int_{\Gamma_{h}^{m}}^h \frac{X_h^{m+1}- X_h^m}{\tau} \cdot \phi_h 
	- \int_{\hat\Gamma_{h, *}^{m}}^h \frac{X_{h, *}^{m+1}- \hat X_{h, *}^m}{\tau}   \cdot \phi_h \notag\\
	&\quad+ \int_{\Gamma_h^m} \nabla_{\Gamma_h^m} X_h^{m+1} \cdot  \nabla_{\Gamma_h^m}  \phi_h  
	-\int_{\hat\Gamma_{h,*}^m} \nabla_{\hat\Gamma_{h,*}^m} X_{h,*}^{m+1} \cdot  \nabla_{\hat\Gamma_{h,*}^m}  \phi_{h}    \notag\\
	&\quad- \int_{\Gamma_h^m} \nabla_{\Gamma_h^m} X_h^{m} \cdot  \nabla_{\Gamma_h^m} I_h [( I - \nbhm  (\nbhm)^\top ) \phi_h ]
	+ \int_{\hat\Gamma_{h,*}^m} \nabla_{\hat\Gamma_{h,*}^m} \hat X_{h,*}^{m} \cdot  \nabla_{\hat\Gamma_{h,*}^m} I_h [(I - \nbhsm (\nbhsm)^\top ) \phi_{h} ]   \notag\\
	&= -\mathscr D^m(\phi_h) .
\end{align} 
The first line on the left-hand side decomposes as
\begin{align}\label{eq:mass_diff}
	&\int_{\Gamma_{h}^{m}}^h \frac{X_h^{m+1}- X_h^m}{\tau}  \cdot \phi_h 
	- 
	\int_{\hat\Gamma_{h, *}^{m}}^h \frac{X_{h, *}^{m+1}- \hat X_{h, *}^m}{\tau}  \cdot \phi_h  \notag\\
	=&\!: \int_{\hat\Gamma_{h, *}^{m}}^h \frac{e_h^{m+1}-\hat e_h^m}{\tau}  \cdot \phi_h + \mathscr J^m(\phi_h) ,
\end{align} 
with
\begin{align}\label{def-Jm-phi}
	\mathscr J^m(\phi_h) &=  \int_{\Ghm}^h \frac{X_{h}^{m+1}- X_{h}^m}{\tau}  \cdot \phi_h  -  \int_{\hat\Gamma_{h, *}^{m}}^h \frac{X_{h}^{m+1}- X_{h}^m}{\tau}  \cdot \phi_h 
	.
\end{align}

Following \cite[Section 5.2]{BL2024}, the second and third lines on the left-hand side of \eqref{eq:err_eq1} can be written as an $H^1$ bilinear form plus lower-order terms. For $\R^2$-valued functions $u$ and $v$ 
on a piecewise smooth curve $\Sigma$, we define
\begin{align}
	\mathscr A_{\Sigma}(u,v)
	&:=\int_{\Sigma}\nabla_{\Sigma}u\cdot\nabla_{\Sigma}v,\notag\\
	\mathscr A^{N}_{\Sigma}(u,v)
	&:=\int_{\Sigma}
	[(\nabla_{\Sigma}u)n]\cdot[(\nabla_{\Sigma}v)n],\notag\\
	\mathscr A^{T}_{\Sigma}(u,v)
	&:=\int_{\Sigma}
	{\rm tr}\!\big((\nabla_{\Sigma}u)(I-nn^\top)
	(\nabla_{\Sigma}v)^\top\big),\notag\\
	\mathscr B_{\Sigma}(u,v)
	&:=\int_{\Sigma}
	(\nabla_{\Sigma}\cdot u)(\nabla_{\Sigma}\cdot v)
	-{\rm tr}(\nabla_{\Sigma}u\nabla_{\Sigma}v). \notag
\end{align}
Thus $\mathscr A_\Sigma=\mathscr A_\Sigma^{N}
+\mathscr A_\Sigma^{T}$.
We define $\ud_i u := (\nabla_\Sigma u)_i $, $i=1,2$.
Using the Einstein summation convention, with
\begin{equation*}
	(D_\Sigma u)_{rl}:=-\ud_lu_r-\ud_ru_l+\delta_{rl}\ud_j u_j ,
\end{equation*}
the following identity holds (see \cite[Eq. (5.8)]{BL2024})
\begin{align}\label{eq:pdf-DGamma-identity}
	\int_\Sigma\nabla_\Sigma{\rm id}\cdot(D_\Sigma u)\nabla_\Sigma v
	&=-\mathscr A_\Sigma^{T}(u,v)+\mathscr B_\Sigma(u,v).
\end{align}

Using the fundamental theorem of calculus and \eqref{eq:pdf-DGamma-identity} (cf. \cite[Eq. (5.10)]{BL2024}), we can obtain the following crucial discrete $H^1$ parabolicity decomposition for the second line of the left-hand side of \eqref{eq:err_eq1}:
\begin{align}\label{eq:full_para}
	&\int_{\Gamma_h^m} \nabla_{\Gamma_h^m} X_h^{m+1} \cdot  \nabla_{\Gamma_h^m}  \phi_h  
	-\int_{\hat\Gamma_{h,*}^m} \nabla_{\hat\Gamma_{h,*}^m} X_{h,*}^{m+1} \cdot  \nabla_{\hat\Gamma_{h,*}^m}  \phi_{h}   \notag\\
	&= \mathscr A_{h, *}^N(e_h^{m+1},\phi_{h}) + \mathscr A_{h, *}^T(e_h^{m+1} - \hat{e}_h^m,\phi_{h}) + \mathscr B^m(\hat{e}_h^m,\phi_{h}) +\mathscr K^m(\phi_{h}) ,
\end{align}
where, for brevity, we use the notation
\begin{align}\label{def-As-AGs}
	\mathscr A_{h,*}^N(u_h,v_h) 
	:\!\!&= \mathscr A_{\Ghsm}^N(u_h,v_h) 
	\quad\mbox{and}\quad
	\mathscr A_{h,*}^T(u_h,v_h) 
	:= \mathscr A_{\Ghsm}^T(u_h,v_h) , \\
	\mathscr A_{h,*}(u_h,v_h) :\!\!&= \mathscr A_{h,*}^N(u_h,v_h)  + \mathscr A_{h,*}^T(u_h,v_h)  
	\quad\mbox{and}\quad
	\mathscr B^m(u_h,v_h) 
	= \mathscr B_{\Gm}(u_h^\ell, v_h^\ell) \label{def-Bm} \\
	\mathscr K^m(\phi_{h}) 
	&= \int_0^1 \big[ \mathscr A_{\hat\Gamma_{h,\theta}^m}^N(e_h^{m+1},\phi_{h}) - \mathscr A_{\Ghsm}^N(e_h^{m+1},\phi_{h}) \big] \d\theta  \notag\\
	&\quad + \int_0^1 
	\big[ \mathscr A_{\hat\Gamma_{h,\theta}^m}^T(e_h^{m+1} - \hat e_h^m,\phi_{h}) - \mathscr A_{\Ghsm}^T(e_h^{m+1} - \hat e_h^m,\phi_{h}) \big] \d\theta   \notag\\
	&\quad+ \int_0^1 \big[ \mathscr B_{\hat\Gamma_{h,\theta}^m}(\hat e_h^m,\phi_{h})  - \mathscr B_{\Ghsm}(\hat e_h^m,\phi_{h}) \big] \d\theta\notag\\
	&\quad + \mathscr B_{\Ghsm}(\hat e_h^m,\phi_{h}) -\mathscr B_{\Gm}(\hat e_h^m,\phi_{h}) \notag\\
	&\quad+\int_0^1 \int_{ \hat\Gamma_{h,\theta}^m}  \nabla_{\hat\Gamma_{h,\theta}^m} (X_{h, \theta}^{m+1}-\hat X_{h, \theta}^{m}) \cdot D_{\hat\Gamma_{h,\theta}^m} \hat e_{h}^{m} \nabla_{\hat\Gamma_{h,\theta}^m} \phi_{h}\d\theta ,
\end{align}
where $\hat{X}_{h, \theta}^{m} := (1 - \theta)\hat X_{h, *}^{m} + \theta X_{h}^{m}$ and $X_{h, \theta}^{m+1} := (1 - \theta)X_{h, *}^{m+1} + \theta X_{h}^{m+1}$ are the linear transports.

The stabilization term can be decomposed similarly as
\begin{align}\label{terms-5-6}
	&\quad- \int_{\Gamma_h^m} \nabla_{\Gamma_h^m} X_h^{m} \cdot  \nabla_{\Gamma_h^m} I_h [(I - \nbhm(\nbhm)^\top ) \phi_h ]
	+ \int_{\hat\Gamma_{h,*}^m} \nabla_{\hat\Gamma_{h,*}^m} \hat X_{h,*}^{m} \cdot  \nabla_{\hat\Gamma_{h,*}^m} I_h [(I  - \nbhsm (\nbhsm)^\top)  \phi_{h} ]   \notag\\
	&= -\int_{\Gamma_h^m} \nabla_{\Gamma_h^m} X_h^{m} \cdot  \nabla_{\Gamma_h^m} I_h [(I - \nbhm(\nbhm)^\top - \Tbhm) \phi_h ]
	\notag\\
	&\quad+ \int_{\hat\Gamma_{h,*}^m} \nabla_{\hat\Gamma_{h,*}^m} \hat X_{h,*}^{m} \cdot  \nabla_{\hat\Gamma_{h,*}^m} I_h[ (I  - \nbhsm (\nbhsm)^\top - \Tbhsm)  \phi_{h} ]   \notag\\
	&\quad- \int_{\Gamma_h^m} \nabla_{\Gamma_h^m} X_h^{m} \cdot  \nabla_{\Gamma_h^m} I_h [(\Tbhm - \Tbhsm) \phi_h ]
	\notag\\
	&\quad- \int_{\Gamma_h^m} \nabla_{\Gamma_h^m} X_h^{m} \cdot  \nabla_{\Gamma_h^m} I_h (\Tbhsm \phi_h )
	+ \int_{\hat\Gamma_{h,*}^m} \nabla_{\hat\Gamma_{h,*}^m} \hat X_{h,*}^{m} \cdot  \nabla_{\hat\Gamma_{h,*}^m} I_h(\Tbhsm  \phi_{h} )   \notag\\
	&=: \mathscr F_1^m(\phi_h) + \mathscr F_2^m(\phi_h) + \mathscr F_3^m(\phi_h) \notag\\
	&\quad - \mathscr A_{h, *}^N(\hat e_h^{m}, I_h \bar T_{h,*}^m \phi_{h}) - \mathscr B^m(\hat e_h^{m}, I_h \bar T_{h,*}^m \phi_{h}) - \mathscr Q^m(I_h \bar T_{h,*}^m \phi_{h})  ,
\end{align}
where in the last identity we have applied the discrete $H^1$ parabolicity decomposition (cf. \cite[Eq. (4.20)]{BL2025}).
Here $\Tbhm= I -  \nbhm(\nbhm)^\top/|\nbhm|^2$ and $\Tbhsm= I -  \nbhsm(\nbhsm)^\top/|\nbhsm|^2$ are the normalized averaged projectors, and the high-order geometric perturbation error $\mathscr Q^m(\cdot)$ is defined as
\begin{align*}
	\mathscr Q^m(\phi_{h}) 
	:= \int_0^1 \big[ \mathscr A_{\hat\Gamma_{h,\theta}^m}^N(\hat e_h^{m},\phi_{h}) - \mathscr A_{\Ghsm}^N(\hat e_h^{m},\phi_{h}) \big] \d\theta 
	&+ \int_0^1 \big[ \mathscr B_{\hat\Gamma_{h,\theta}^m}(\hat e_h^m,\phi_{h})  - \mathscr B_{\Ghsm}(\hat e_h^m,\phi_{h}) \big] \d\theta\notag\\
	& + \mathscr B_{\Ghsm}(\hat e_h^m,\phi_{h}) -\mathscr B_{\Gm}(\hat e_h^m,\phi_{h}) .
\end{align*}

Substituting \eqref{eq:mass_diff}, \eqref{eq:full_para} and \eqref{terms-5-6} into \eqref{eq:err_eq1} yields the following form of the error equation:
\begin{align}\label{eq:err_eq2}
	&\int_{\hat\Gamma_{h, *}^{m}}^h \frac{e_h^{m+1}-\hat e_h^m}{\tau} \cdot  \phi_h  + \mathscr J^m(\phi_h) \notag\\
	&\quad+ \mathscr A_{h, *}^N(e_h^{m+1},\phi_{h}) + \mathscr A_{h, *}^T(e_h^{m+1} - \hat{e}_h^m,\phi_{h}) + \mathscr B^m(\hat{e}_h^m,\phi_{h}) +\mathscr K^m(\phi_{h})   \notag\\
	&\quad+ \sum_{i = 1}^3 \mathscr F_i^m(\phi_h) - \mathscr A_{h, *}^N(\hat e_h^{m}, I_h \bar T_{h,*}^m \phi_{h}) - \mathscr B^m(\hat e_h^{m}, I_h \bar T_{h,*}^m \phi_{h})  - \mathscr Q^m(I_h \bar T_{h,*}^m \phi_{h}) \notag\\
	&= -\mathscr D^m(\phi_h) .
\end{align} 
The discrete $H^1$ parabolicity of the error equation \eqref{eq:err_eq2} becomes clear when choosing $\phi_h = \eM$ and using
\begin{align}
	&\mathscr A_{h, *}^N(e_h^{m+1},\eM) + \mathscr A_{h, *}^T(e_h^{m+1} - \hat{e}_h^m,\eM) 
	\geq \frac{1}{2} \mathscr A_{h,*}(e_h^{m+1},\eM) - \frac{1}{2} \mathscr A_{h,*}^T(\hat e_h^m,\ehm) . \notag
\end{align}
Note that $\frac{1}{2} \mathscr A_{h,*}^T(\hat e_h^m,\ehm)$ is a higher-order smaller term because of the orthogonality between $\ehm$ and $\mathscr A_{h,*}^T(\cdot,\cdot)$ as a result of the super-approximation (Lemma~\ref{lemma:AT-sup}).

\subsection{Bilinear error estimates}

The following bilinear error estimates are standard (cf. \cite[Lemma 4.2]{BL2024}), which can be proved by a fundamental theorem of calculus argument together with the norm equivalence.
\begin{lemma}\label{lemma:e-blinear}
	Assuming the induction hypothesis \eqref{eq:ind_hypo1},
	for all finite element functions $f_h,g_h\in S_h(\Ghsm)$ and $1/p+1/q+1/r=1$, it holds that
	\begin{align*}
		\Big|\int_{\Ghm} f_h g_h - \int_{\Ghsm} f_h g_h \Big| 
		&\lesssim
		\| \nabla_\Ghsm \ehm \|_{L^p(\Ghsm)} \|f_h\|_{ L^q(\Ghsm)}
		\|g_h\|_{L^r(\Ghsm)} ,
		\\
		\Big|\int_{\Ghm} \nabla_\Ghm f_h g_h - \int_{\Ghsm} \nabla_\Ghsm f_h g_h \Big| 
		&\lesssim
		\| \nabla_\Ghsm \ehm \|_{L^p(\Ghsm)} \| \nabla_\Ghsm f_h\|_{ L^q(\Ghsm)}
		\|g_h\|_{L^r(\Ghsm)}
		,
	\end{align*}
	and
	\begin{align*}
		&\Big|\int_{\Ghm}\nabla_{\Ghm} f_h\cdot\nabla_{\Ghm} g_h - \int_{\Ghsm}\nabla_{\Ghsm} f_h \cdot \nabla_{\Ghsm} g_h \Big| 
		\notag\\
		&
		\qquad
		\lesssim
		\| \nabla_\Ghsm \ehm \|_{L^p(\Ghsm)} \| \nabla_\Ghsm f_h\|_{ L^q(\Ghsm)}
		\| \nabla_\Ghsm g_h\|_{L^r(\Ghsm)}
		.
	\end{align*}
	Moreover, \eqref{normal-intpl-2}--\eqref{normal-intpl-3} imply
	\begin{align}
		\Big| \int_{\Ghsm} (\nhm-\nhsm) f_h g_h \Big| 
		+
		\Big| \int_{\Ghsm} (\nbhm-\nbhsm) f_h g_h \Big| 
		\lesssim
		\| \nabla_\Ghsm \ehm \|_{L^p(\Ghsm)} \|f_h\|_{ L^q(\Ghsm)}
		\|g_h\|_{L^r(\Ghsm)} . \notag
	\end{align}
\end{lemma}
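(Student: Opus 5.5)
The plan is a fundamental theorem of calculus argument along the linearly interpolated family $\hat\Gamma_{h,\theta}^m=({\rm id}+\theta\ehm)(\Ghsm)$, $\theta\in[0,1]$, which joins $\Ghsm$ ($\theta=0$) to $\Ghm$ ($\theta=1$). Fix $f_h,g_h$ with given nodal vectors and set $\Phi(\theta):=\int_{\hat\Gamma_{h,\theta}^m} f_h g_h$. The other two integrals are treated the same way, with $\nabla f_h\, g_h$ and $\nabla f_h\cdot\nabla g_h$ as integrands. Then
\[
\int_{\Ghm} f_hg_h-\int_{\Ghsm} f_hg_h=\int_0^1\Phi'(\theta)\,\d\theta .
\]
The finite element functions are transported with the nodal identification, so their material derivative along the velocity $\ehm$ vanishes: $\md f_h=0$. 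The transport formulas (Item~7 of Lemma~\ref{lemma:ud} and the companion identities in the appendix) then give
\[
\Phi'(\theta)=\int_{\hat\Gamma_{h,\theta}^m}(\nabla_{\hat\Gamma_{h,\theta}^m}\cdot\ehm)\,f_hg_h .
\]
For the gradient terms we additionally use
\[
\md(\nabla_{\hat\Gamma_{h,\theta}^m}f_h)=-(\nabla_{\hat\Gamma_{h,\theta}^m}\ehm)^{\top}\text{-type term}\;\nabla_{\hat\Gamma_{h,\theta}^m}f_h ,
\]
that is, a term linear in $\nabla\ehm$ multiplied by $\nabla f_h$. In every case the integrand of $\Phi'(\theta)$ is a product of $|\nabla_{\hat\Gamma_{h,\theta}^m}\ehm|$ with the corresponding factors built from $f_h$ and $g_h$.

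Next apply Hölder's inequality with $1/p+1/q+1/r=1$ on $\hat\Gamma_{h,\theta}^m$. This bounds $|\Phi'(\theta)|$ by
\[
\|\nabla\ehm\|_{L^p}\,\|f_h\|_{L^q}\,\|g_h\|_{L^r},
\]
or by the analogous product with $\nabla f_h$ or $\nabla g_h$ in place of $f_h$ or $g_h$. The norms are then transferred back to $\Ghsm$ uniformly in $\theta$ by Lemma~\ref{lemma:norm-equiv}. This lemma applies because the induction hypothesis \eqref{eq:ind_hypo1} gives $\|\nabla_{\Ghsm}\ehm\|_{L^\infty}\le h^{1/4}\le 1/2$ for small $h$. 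Elementwise, the functions are polynomial pullbacks of the same linear maps, so the $L^p$ norm equivalence also covers the piecewise constants $\nabla\ehm$, $\nabla f_h$ and $\nabla g_h$. Integrating over $\theta\in[0,1]$ yields the first three estimates.

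For the last estimate we use Hölder's inequality directly on $\Ghsm$:
\[
\Big|\int_{\Ghsm}(\nhm-\nhsm)f_hg_h\Big|\le \|\nhm-\nhsm\|_{L^p(\Ghsm)}\|f_h\|_{L^q}\|g_h\|_{L^r},
\]
and then insert \eqref{normal-intpl-2}. The $\bar n$ term is handled identically via \eqref{normal-intpl-3}.

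The only delicate point is to justify the transport identities on the piecewise linear (broken) curves. These must hold elementwise with uniform constants, which requires that $\hat\Gamma_{h,\theta}^m$ remains a nondegenerate triangulation for all $\theta$. That property is exactly what the $W^{1,\infty}$ part of \eqref{eq:ind_hypo1}, combined with Lemma~\ref{lemma:norm-equiv}, guarantees.
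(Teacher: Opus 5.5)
Your proposal is correct and follows the paper's route: the paper only remarks that the first three bounds are standard (cf.\ \cite[Lemma 4.2]{BL2024}) and follow from a fundamental theorem of calculus argument along $\hat\Gamma_{h,\theta}^m$ together with the norm equivalence of Lemma~\ref{lemma:norm-equiv}, and that the last bound follows from \eqref{normal-intpl-2}--\eqref{normal-intpl-3}, which is exactly your argument via H\"older's inequality. One small correction: the transport identities you actually use are items 6 and 5 of Lemma~\ref{lemma:ud} (the time derivative of integrals and the commutator for $\md$ applied to the tangential gradient), not item 7, which concerns the evolution of the normal.
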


\subsection{Estimates for linear and bilinear forms}\label{sec:lin-bilin-est}


First we note that all $\mathscr B$-related terms vanish on curves, since the tangential
gradient has rank one and
\[
\operatorname{tr}(\nabla_\Gamma u\,\nabla_\Gamma v)
=
(\nabla_\Gamma\cdot u)(\nabla_\Gamma\cdot v).
\]

The fundamental theorem of calculus rewrites the functional $\mathscr J^m(\phi_h)$ defined in \eqref{def-Jm-phi} as
\begin{align}
	\mathscr J^m(\phi_h)
	&=\int_{\hat\Gamma_{h,\theta}^m }^h \frac{X_{h}^{m+1}-X_{h}^{m}}{\tau} \cdot \phi_{h}
	\bigg|_{\theta=0}^{\theta=1}
	\notag\\
	&
	= \int_0^1 \int_{\hat\Gamma_{h,\theta}^m }^h \frac{X_{h}^{m+1}-X_{h}^{m}}{\tau} \cdot \phi_{h} (\nabla_{\hat\Gamma_{h,\theta}^m } \cdot \hat e_{h}^m) \d\theta 
	\quad\mbox{(Lemma~\ref{lemma:ud}, item 6)} . \notag
\end{align}
Lemma~\ref{lemma:e-blinear} directly implies
\begin{align}\label{eq:J_est}
	|\mathscr J^m(\phi_h)| 
	&\lesssim
	(1+\| \delta_\tau \ehm \|_{L^\infty(\Ghsm)}) \| \nabla_{\Ghsm} \ehm \|_{L^2(\Ghsm)} \| \phi_h \|_{L^2(\Ghsm)} 
	.
\end{align}
For a tangentially projected test function $I_h\Tsm\phi_h$, mass lumping yields the corresponding directional estimate. In particular, it aligns $\delta_\tau\ehm$ on the right-hand side along the same projection:
\begin{align}\label{eq:J_est-tan}
	|\mathscr J^m(I_h\Tsm\phi_h)| 
	\lesssim
	(1+\|  I_h\Tsm \delta_\tau \ehm \|_{L^\infty(\Ghsm)}) \| \nabla_{\Ghsm} \ehm \|_{L^2(\Ghsm)} \| I_h\Tsm\phi_h \|_{L^2(\Ghsm)} 
	.
\end{align}
Using integration by parts and the orthogonality, we can show (cf. Lemma~\ref{lemma:AT-sup})
\begin{align}
	| \mathscr A_{h, *}^T(\hat{e}_h^m,\phi_h) | 
	&\lesssim \min\Big\{\| \ehm \|_{L^2({\Ghsm})}  \| \phi_h \|_{H^1({\Ghsm})} , \| \ehm \|_{H^1({\Ghsm})}  \| \phi_h \|_{L^2({\Ghsm})}\Big\} \label{eq:AT_em_em}.
\end{align}
As in the derivation of \eqref{eq:AT_em_em}, we also have
\begin{align*}
	|\mathscr A_{h, *}^N(\hat e_h^{m}, I_h \bar T_{h,*}^m \phi_{h})| 
	&\leq 
	| \mathscr A_{h, *}^N(\hat e_h^{m}, I_h \Tsm \phi_{h}) | + | \mathscr A_{h, *}^N(\hat e_h^{m}, I_h (\bar T_{h,*}^m - \Tsm) \phi_{h}) | 
	\notag\\
	&\lesssim 
	\min\Big\{\| \ehm \|_{L^2({\Ghsm})}  \| I_h \bar T_{h,*}^m \phi_h \|_{H^1({\Ghsm})} , \| \ehm \|_{H^1({\Ghsm})}  \| I_h \bar T_{h,*}^m \phi_h \|_{L^2({\Ghsm})}\Big\}
	\notag\\
	&\quad+ h \| \nabla_\Ghsm \ehm \|_{L^2(\Ghsm)} \|  \phi_h \|_{L^2(\Ghsm)}  .
\end{align*}
Another direct application of Lemma~\ref{lemma:e-blinear} yields
\begin{align}
	\label{eq:K_est}
	| \mathscr K^m(\phi_h) | 
	&\lesssim (\| \nabla_{\Ghsm} \ehm \|_{L^2(\Ghsm)} + \| \nabla_{\Ghsm} e_h^{m+1} \|_{L^2(\Ghsm)}) \| \nabla_{\Ghsm} \ehm \|_{L^\infty(\Ghsm)} 
	 \| \nabla_{\Ghsm} \phi_h \|_{L^2(\Ghsm)}  \notag\\
	&\quad+ \tau (1 +  \| \nabla_{\Ghsm} \delta_\tau \ehm \|_{L^2(\Ghsm)}) \| \nabla_{\Ghsm} \hat e_{h}^{m} \|_{L^\infty(\Ghsm)} \| \nabla_{\Ghsm} \phi_h \|_{L^2(\Ghsm)}
	\notag\\
	&\lesssim \| \nabla_{\Ghsm} \ehm \|_{L^\infty(\Ghsm)} \| \nabla_{\Ghsm} \ehm \|_{L^2(\Ghsm)} 
	\| \nabla_{\Ghsm} \phi_h \|_{L^2(\Ghsm)}  \notag\\
	&\quad+ \tau (1 +  \| \nabla_{\Ghsm} \delta_\tau \ehm \|_{L^2(\Ghsm)}) \| \nabla_{\Ghsm} \hat e_{h}^{m} \|_{L^\infty(\Ghsm)} \| \nabla_{\Ghsm} \phi_h \|_{L^2(\Ghsm)}
\end{align}
and
\begin{align}
	| \mathscr Q^m(I_h \bar T_{h,*}^m \phi_{h}) |
	&\lesssim \| \nabla_{\Ghsm} \ehm \|_{L^\infty(\Ghsm)} \| \nabla_{\Ghsm} \ehm \|_{L^2(\Ghsm)}  \| \nabla_{\Ghsm} I_h \bar T_{h,*}^m \phi_h \|_{L^2(\Ghsm)} 
	\label{eq:Q_est}
	,
\end{align}
where we have used 
\begin{align}
	\| \nabla_{\hat\Gamma_{h,\theta}^m} (X_{h, \theta}^{m+1}-\hat X_{h, \theta}^{m}) \|_{L^2(\Ghsm)}
	&\lesssim  \tau (1 +  \| \nabla_{\Ghsm} \delta_\tau \ehm \|_{L^2(\Ghsm)}) ,
	\notag\\
	\|\nabla_{\Ghsm}e_h^{m+1}\|_{L^2(\Ghsm)}
	&\lesssim
	\tau
	+
	\|\nabla_{\Ghsm}\hat e_h^m\|_{L^2(\Ghsm)} + \tau\|\nabla_{\Ghsm}\delta_\tau\ehm\|_{L^2(\Ghsm)} , \notag
\end{align}
 in the derivation of \eqref{eq:K_est}.

We next estimate the remaining terms $\mathscr F_i^m(\phi_h),i=1,2,3$, in \eqref{eq:err_eq2}.
Using \eqref{eq:nhs_bar_len} and Lemma~\ref{lemma:n_bar_app}, for any node $p\in\mathcal N(\Ghsm)$, we have
\begin{align*}
	\big| |\bar n_h^m(p)| - 1 \big|
	&\lesssim 
	|  n_h^m(p+)  - I_hn_*^m(p) |^2 + |  n_h^m(p-)  - I_hn_*^m(p) |^2 \\
	&\lesssim 
	|\nabla_\Ghsm \ehm(p+)|^2+|\nabla_\Ghsm \ehm(p-)|^2+ h^2 ,
\end{align*}
whereas the arithmetically averaged conormal vector has a consistency error of one order lower:
\begin{align*}
	| \mu_h^m(p+) + \mu_h^m(p-) |
	&\lesssim 
	|  n_h^m(p+)  - I_hn_*^m(p) | + |  n_h^m(p-)  - I_hn_*^m(p) | \\
	&\lesssim 
	|\nabla_\Ghsm \ehm(p+)|+|\nabla_\Ghsm \ehm(p-)|+ h .
\end{align*}
Using integration by parts, we estimate $\mathscr F_1^m(\phi_h)$ as follows:
\begin{align}\label{Estimate-F1m}
	|\mathscr F_1^m(\phi_h) | &= \Big| \int_{\Gamma_h^m} \nabla_{\Gamma_h^m} X_h^{m} \cdot  \nabla_{\Gamma_h^m}  I_h [(I - \bar n_h^m (\bar n_h^m)^\top - \bar T_h^m) \phi_h ] \Big|  
	\notag\\
	&= \Big| - \int_{\Gamma_h^m} \Delta_{\Gamma_h^m} X_h^{m} \cdot  I_h [(I - \bar n_h^m (\bar n_h^m)^\top - \bar T_h^m) \phi_h ] 
	\notag\\
	&\quad+ \sum_{p\in\mathcal N(\Ghm)} \Big(\mu_h^m(p+) \cdot (\nabla_{\Gamma_h^m} X_h^{m})(p+) + \mu_h^m(p-) \cdot (\nabla_{\Gamma_h^m} X_h^{m})(p-)\Big)  
	\notag\\
	&\qquad\times
	I_h [(I - \bar n_h^m (\bar n_h^m)^\top - \bar T_h^m) \phi_h ](p) \Big|
	\notag\\
	&=
	\Big|
	\sum_{p\in\mathcal N(\Ghm)} (\mu_h^m(p+) + \mu_h^m(p-))\cdot  [(I - \bar n_h^m (\bar n_h^m)^\top - \bar T_h^m) \phi_h](p)
	\Big|
	\notag\\
	&\lesssim
	\sum_{p\in\mathcal N(\Ghsm)} |\mu_h^m(p+)+\mu_h^m(p-)| \, \big| |\bar n_h^m(p)| - 1 \big| \, |\phi_h(p)|
	\notag\\
	 &\lesssim\sum_{p\in\mathcal N(\Ghsm)} \big(|\nabla_\Ghsm \ehm(p+)|^3+|\nabla_\Ghsm \ehm(p-)|^3 + h^3 \big) \, |\phi_h(p)|
	 \notag\\
	&\lesssim h^{-3/2}\| \nabla_\Ghsm \ehm \|_{L^2(\Ghsm)}^3
	\| \phi_h \|_{L^\infty(\Ghsm)}
	+
	 h^{2} \| \phi_h \|_{L^2(\Ghsm)}
	.
\end{align}
A similar integration-by-parts argument also carries over to $\mathscr F_2^m$ and $\mathscr F_3^m$:
\begin{align}\label{Estimate-F2m}
	|\mathscr F_2^m(\phi_h) | 
	\lesssim
	\sum_{p\in\mathcal N(\Ghsm)}
	|\hat\mu_{h,*}^m(p+) + \hat\mu_{h,*}^m(p-)|
	\cdot
	 \big| |\nbhsm(p)| - 1 \big|  \, |\phi_h(p)|
	\lesssim h^{2} \| \phi_h \|_{L^2(\Ghsm)},
\end{align}
and
\begin{align}\label{Estimate-F3m}
	|\mathscr F_3^m(\phi_h)|&=\Big|\sum_{p\in\mathcal N(\Ghm)} (\mu_h^m(p+) + \mu_h^m(p-))\cdot  (\bar T_h^m(p) - \bar T_{h,*}^m(p)) \phi_h (p) \Big| \notag\\
	&\lesssim
	\sum_{p\in\mathcal N(\Ghsm)} \big(|\nabla_\Ghsm \ehm(p+)|+|\nabla_\Ghsm \ehm(p-)| \big) \big(|\nabla_\Ghsm \ehm(p+)|+|\nabla_\Ghsm \ehm(p-)| + h \big) \, |\phi_h(p)|
	\notag\\
	&\lesssim h^{-1}\| \nabla_\Ghsm \ehm \|_{L^2(\Ghsm)}^2
	\| \phi_h \|_{L^\infty(\Ghsm)}
	+
	\| \nabla_\Ghsm \ehm \|_{L^2(\Ghsm)} \| \phi_h \|_{L^2(\Ghsm)}
	. 
\end{align}

The bounds \eqref{Estimate-F1m}--\eqref{Estimate-F3m}, together with the estimates for $\mathscr J^m$, $\mathscr K^m$, and $\mathscr Q^m$, contain all lower-order terms required in the velocity and energy arguments.

\subsection{Tangential stability for Dziuk's method}\label{sec:tan_stab}

%



Using the definition of $\delta_\tau X_h^m$ in Section~\ref{section:geometry}, the numerical scheme \eqref{eq:Dziuk-stab} becomes
\begin{align}\label{eq:stiff-iden}
	&
	\tau^{-1}\int_{\Gamma_h^m}^h \delta_\tau X_h^m \cdot  \phi_h
	+
	\int_{\Gamma_h^m} \nabla_{\Gamma_h^m} \delta_\tau X_h^m \cdot  \nabla_{\Gamma_h^m}  \phi_h \notag\\
	&= \tau^{-1}\int_{\Gamma_h^m}^h  I_h(H^m n^m) \cdot  \phi_h
	+
	\int_{\Gamma_h^m} \nabla_{\Gamma_h^m} I_h(H^m n^m) \cdot  \nabla_{\Gamma_h^m}  \phi_h
	\notag\\
	&\quad
	-\tau^{-1} \int_{\Gamma_h^m} \nabla_{\Gamma_h^m} X_h^{m} \cdot  \nabla_{\Gamma_h^m} I_h [(\phi_h\cdot\bar n_h^m) \bar n_h^m ]  
	=: \sum_{i = 1}^3 \mathscr L^m_i(\phi_h) .
\end{align}
Since $\Nsm$ and $\Tsm$ are orthogonal and the mass lumping is used here,
\begin{align}
	\mathscr L^m_1(I_h\Tsm\phi_h) =0.  \notag
\end{align}
For the second contribution, decompose
\begin{align}
	&\mathscr L^m_2(I_h\Tsm\phi_h)  
	=  
	\int_{\Gamma_h^m} \nabla_{\Gamma_h^m} I_h(H^m n^m) \cdot  \nabla_{\Gamma_h^m}  I_h\Tsm\phi_h
	\notag\\
	&=  
	\int_{\Gm} \nabla_{\Gm} (H^m n^m) \cdot  \nabla_{\Gm}  (I_h\Tsm\phi_h)^{\ell}
	\notag\\
	&\quad+
	\int_{\Ghsm} \nabla_{\Ghsm} (H^m n^m)^{-\ell} \cdot  \nabla_{\Ghsm}  I_h\Tsm\phi_h
	-
	\int_{\Gm} \nabla_{\Gm} (H^m n^m) \cdot  \nabla_{\Gm}  (I_h\Tsm\phi_h)^{\ell}
	\notag\\
	&\quad-
	\int_{\Ghsm} \nabla_{\Ghsm} (1-I_h)(H^m n^m) \cdot  \nabla_{\Ghsm}  I_h\Tsm\phi_h
	\notag\\
	&\quad+
	\int_{\Gamma_h^m} \nabla_{\Gamma_h^m} I_h(H^m n^m) \cdot  \nabla_{\Gamma_h^m}  I_h\Tsm\phi_h
	-
	\int_{\Ghsm} \nabla_{\Ghsm} I_h(H^m n^m) \cdot  \nabla_{\Ghsm}  I_h\Tsm\phi_h
	\notag\\
	&
	=: \mathscr L^m_{21}(I_h\Tsm\phi_h)  
	+
	\mathscr L^m_{22}(I_h\Tsm\phi_h)  
	+
	\mathscr L^m_{23}(I_h\Tsm\phi_h)  
	+
	\mathscr L^m_{24}(I_h\Tsm\phi_h)   . \notag
\end{align}
Using the integration by parts on the exact curve $\Gm$, we have
\begin{align}
	| \mathscr L^m_{21}(I_h\Tsm\phi_h) | \lesssim 
	\| I_h\Tsm\phi_h  \|_{L^2(\Ghsm)} . \notag
\end{align}
The geometric perturbation estimate (Lemma~\ref{lemma:geo-pert}) gives
\begin{align}
	| \mathscr L^m_{22}(I_h\Tsm\phi_h) | \lesssim  h^2 \| \nabla_\Ghsm I_h\Tsm\phi_h  \|_{L^2(\Ghsm)} . \notag 
\end{align}
Furthermore, local integration by parts yields
\begin{align}
	\mathscr L^m_{23}(I_h\Tsm\phi_h) = 0. \notag
\end{align}
The bilinear error estimate (Lemma~\ref{lemma:e-blinear}) gives
\begin{align}
	| \mathscr L^m_{24}(I_h\Tsm\phi_h) | \lesssim  \| \nabla_\Ghsm \ehm  \|_{L^2(\Ghsm)} \| \nabla_\Ghsm I_h\Tsm\phi_h  \|_{L^2(\Ghsm)} . \notag
\end{align}
Combining these four estimates yields
\begin{align}
	| \mathscr L^m_2(I_h\Tsm\phi_h) | &\lesssim  (h^2 + \| \nabla_\Ghsm \ehm  \|_{L^2(\Ghsm)}) \| \nabla_\Ghsm I_h\Tsm\phi_h  \|_{L^2(\Ghsm)} 
	+
	\| I_h\Tsm\phi_h  \|_{L^2(\Ghsm)} . \notag
\end{align}

From the orthogonality, we know
\begin{align}
	\mathscr L^m_3(I_h\Tsm\phi_h) 
	&= -\tau^{-1} \int_{\Gamma_h^m} \nabla_{\Gamma_h^m} X_h^{m} \cdot  \nabla_{\Gamma_h^m} I_h [(I_h\Tsm\phi_h\cdot\bar n_h^m) \bar n_h^m ]  
	\notag\\
	&= \tau^{-1} \int_{\Gamma_h^m} \nabla_{\Gamma_h^m} X_h^{m} \cdot  \nabla_{\Gamma_h^m} I_h [(I_h(\Tbhm-\Tsm)\phi_h\cdot\bar n_h^m) \bar n_h^m ]  . \notag
\end{align}
Applying integration by parts as in the derivation of \eqref{Estimate-F3m}, we obtain
\begin{align}
	|\mathscr L^m_3(I_h\Tsm\phi_h)| &\lesssim h^{-1}\tau^{-1} (h+\| \nabla_\Ghsm \ehm \|_{L^\infty(\Ghsm)}) (h^2+\| \nabla_\Ghsm \ehm \|_{L^2(\Ghsm)}) \| I_h\Tsm\phi_h \|_{L^2(\Ghsm)} . \notag
\end{align}

Inserting the estimates of $| \mathscr L^m_j(I_h \Tsm\phi_h) |$, $j=1,\dots,3$, into \eqref{eq:stiff-iden}, choosing $\phi_h = I_h \Tsm\delta_\tau X_h^m$, and applying Young's inequality and absorption give
\begin{align}\label{eq:N+TdotT}
	&\tau^{-1}\int_{\Gamma_h^m}^h I_h\Tsm\delta_\tau X_h^m \cdot  I_h\Tsm\delta_\tau X_h^m
	+
	\int_{\Gamma_h^m} \nabla_{\Gamma_h^m} I_h\Tsm \delta_\tau X_h^m \cdot  \nabla_{\Gamma_h^m}  I_h\Tsm\delta_\tau X_h^m
	 \notag\\
	&
	= 
	-
	\int_{\Gamma_h^m} \nabla_{\Gamma_h^m} I_h\Nsm \delta_\tau X_h^m \cdot  \nabla_{\Gamma_h^m}  I_h\Tsm\delta_\tau X_h^m
	+
	\sum_{i=1}^3 \mathscr L^m_i(I_h\Tsm\delta_\tau X_h^m)
	    .
\end{align}
Similar to \cite[Appendix E]{BGV2026} and \cite[Appendix E]{BGV2027}, the following orthogonality cancellation lemma can be proved. 
\begin{lemma}\label{lemma:NT_stab_ref}
	For any $f_h,g_h \in [S_h(\Ghsm)]^2$, we have
	\begin{align}
		&\Big| \int_{\Gamma_h^m} \nabla_{\Gamma_h^m} I_h \Nsm f_h \cdot  \nabla_{\Gamma_h^m} I_h \Tsm g_h \Big|
		\lesssim 
		\| \nabla_\Ghsm\ehm\|_{L^\infty(\Ghsm)} \| \nabla_\Ghsm f_h \|_{L^2(\Ghsm)} \| \nabla_\Ghsm g_h \|_{L^2(\Ghsm)}
		\notag\\
		&\qquad\qquad
		+\min\Big\{
			\| f_h \|_{H^1(\Ghsm)} \| g_h \|_{L^2(\Ghsm)},
		\| f_h \|_{L^2(\Ghsm)} \| g_h \|_{H^1(\Ghsm)}
		\Big\} 
		. \notag
	\end{align}
\end{lemma}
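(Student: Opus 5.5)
The plan is to reduce the integral on $\Gamma_h^m$ to one on the consistency curve $\Ghsm$, then on $\Ghsm$ exploit the pointwise orthogonality $N_*^m T_*^m=0$ by commuting the interpolation operator past the smooth projectors.

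\textbf{Step 1: move to $\Ghsm$.} By the third estimate of Lemma~\ref{lemma:e-blinear} with $p=\infty$, $q=r=2$, the difference between the integral on $\Gamma_h^m$ and the same integral on $\Ghsm$ is bounded by
\[
\|\nabla_\Ghsm\ehm\|_{L^\infty(\Ghsm)}\,\|\nabla_\Ghsm I_h\Nsm f_h\|_{L^2(\Ghsm)}\,\|\nabla_\Ghsm I_h\Tsm g_h\|_{L^2(\Ghsm)}.
\]
Since $\Nsm,\Tsm$ are smooth with bounded derivatives, elementwise we have $\nabla(I_h(\Nsm f_h))=\nabla(\Nsm f_h)+\nabla(I_h-1)(\Nsm f_h)$. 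Lemma~\ref{lemma:Ih} and an inverse estimate (on each element $f_h$ is linear, so the second derivative of $\Nsm f_h$ involves only $f_h$ and $\nabla f_h$ times derivatives of $\Nsm$) give $\|\nabla I_h\Nsm f_h\|_{L^2}\lesssim\|f_h\|_{H^1}$. Using the Poincaré-free form, the $L^2$ part contributes via $\|f_h\|_{L^2}$, which yields the first term on the right-hand side. Strictly this gives $\|f_h\|_{H^1}$ rather than $\|\nabla f_h\|_{L^2}$; the lower-order piece is absorbed into the min-term because $\|\nabla\ehm\|_{L^\infty}\le h^{1/4}\lesssim 1$.

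\textbf{Step 2: orthogonality on $\Ghsm$.} Write $I_h\Nsm f_h=\Nsm f_h-(1-I_h)(\Nsm f_h)$, and similarly for $\Tsm g_h$. The interpolation-error pieces are handled with the super-approximation estimates of Appendix~\ref{sec:super} (Lemma~\ref{Lemma-GLW}). Because $f_h$ is piecewise linear, $\|\nabla(1-I_h)(\Nsm f_h)\|_{L^2}\lesssim h\|f_h\|_{H^1}$ and $\lesssim\|f_h\|_{L^2}$ via an inverse inequality. Pairing against the other factor, whose gradient is $\lesssim\|g_h\|_{H^1}$, and choosing the appropriate variant yields the min-term.

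For the main part $\int_\Ghsm\nabla(\Nsm f_h)\cdot\nabla(\Tsm g_h)$, expand by the product rule. The term $\int \Nsm\nabla f_h\cdot\Tsm\nabla g_h$ vanishes pointwise since $\Nsm\Tsm=0$ and both are symmetric. Every remaining term contains at least one derivative of a projector, so it is bounded by $\|f_h\|_{L^2}\|g_h\|_{H^1}$ or $\|f_h\|_{H^1}\|g_h\|_{L^2}$. The term $\int(\nabla\Nsm)f_h\cdot\Tsm\nabla g_h$ gives the first form directly, and the second form follows after integrating by parts on the closed curve $\Ghsm$ elementwise. Boundary contributions at nodes cancel because $f_h$, $g_h$, $\Nsm$, $\Tsm$ are continuous; the tangent of $\Ghsm$ jumps, but the resulting jump terms are $O(h)$ per node and sum to $\lesssim h^{-1}\cdot h\|\cdot\|_{L^2}\|\cdot\|_{L^2}$-type bounds, which are acceptable.

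\textbf{Main obstacle.} The main obstacle is obtaining both orders in the min with constants uniform in $h$. The integration by parts on the piecewise-flat $\Ghsm$ must be justified, and the nodal jump terms controlled. I would avoid the jumps by lifting to $\Gm$ with Lemma~\ref{lemma:geo-pert}, which costs $h\|\nabla f_h\|\|\nabla g_h\|$, then converting one factor via the inverse estimate $h\|\nabla v_h\|\lesssim\|v_h\|$. After that, I would integrate by parts on the smooth curve $\Gm$, exactly as in \cite[Appendix E]{BGV2026}.
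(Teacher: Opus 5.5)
Your route is the one the paper defers to via \cite[Appendix E]{BGV2026}: transfer from $\Gamma_h^m$ to $\Ghsm$ with Lemma~\ref{lemma:e-blinear}, strip $I_h$ by super-approximation, use $\Nsm\Tsm=0$ pointwise, and integrate by parts on $\Gm$ in the terms that carry a derivative of a projector. Your Step~2 is sound. One citation fix: the bound $\|\nabla_{\Ghsm}(1-I_h)(\Nsm f_h)\|_{L^2}\lesssim h\|f_h\|_{H^1}$ comes from Lemma~\ref{lemma:super_conv}, not Lemma~\ref{Lemma-GLW}.

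\textbf{The step that fails is the absorption at the end of Step~1.} Since $\|\nabla_{\Ghsm} I_h\Nsm f_h\|_{L^2}$ is only bounded by $\|f_h\|_{H^1}$, your transfer error contains two mixed pieces: $\|\nabla_{\Ghsm}\ehm\|_{L^\infty}\|f_h\|_{L^2}\|\nabla_{\Ghsm}g_h\|_{L^2}$ and $\|\nabla_{\Ghsm}\ehm\|_{L^\infty}\|\nabla_{\Ghsm}f_h\|_{L^2}\|g_h\|_{L^2}$. Each is dominated by only one entry of the min, whereas absorbing into the min requires domination by both. The smallness $\|\nabla_{\Ghsm}\ehm\|_{L^\infty}\lesssim 1$ does not help with this. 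Concretely, take a constant $f_h\equiv c$. The first term on the right vanishes and the min equals $\|c\|_{L^2}\|g_h\|_{L^2}$. Your Step~1 bound, however, is of order $\|\nabla_{\Ghsm}\ehm\|_{L^\infty}\|c\|_{L^2}\|\nabla_{\Ghsm}g_h\|_{L^2}$. For oscillating $g_h$ this is of size $h^{-1}\|\nabla_{\Ghsm}\ehm\|_{L^\infty}\|c\|_{L^2}\|g_h\|_{L^2}$, and \eqref{eq:ind_hypo1} only gives $\|\nabla_{\Ghsm}\ehm\|_{L^\infty}\le h^{1/4}$, so this is not controlled.

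What your Steps~1--2 actually prove is the lemma with $\|f_h\|_{H^1(\Ghsm)}\|g_h\|_{H^1(\Ghsm)}$ in the first term. That weaker form is all the paper uses. In deriving \eqref{eq:tan_stab_H1}, this term is immediately converted, via $\|f_h\|_{H^1}\lesssim h^{-1}\|f_h\|_{L^2}$ and $\|\nabla_{\Ghsm}\ehm\|_{L^\infty}\lesssim h^{-1/2}\|\nabla_{\Ghsm}\ehm\|_{L^2}$, into the factor $h^{-3/2}\|\nabla_{\Ghsm}\ehm\|_{L^2}\|I_h\Nsm\delta_\tau X_h^m\|_{L^2}$. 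So you can either state and prove that version, or, for the literal statement, replace the crude transfer bound by one in which constants cost nothing.

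For the literal statement, one way is as follows. Let $\mu_\theta$ be the unit tangent and $s$ the arclength on $\hat\Gamma_{h,\theta}^m$. On a curve, $\frac{\d}{\d\theta}\int_{\hat\Gamma_{h,\theta}^m}\nabla u_h\cdot\nabla v_h=-\int_{\hat\Gamma_{h,\theta}^m}(\partial_s u_h\cdot\partial_s v_h)(\mu_\theta\cdot\partial_s\ehm)$. Because $\ehm$ is nodally normal to $\Gm$, the stretching $\mu_\theta\cdot\partial_s\ehm$ is $O(\|\ehm\|_{L^\infty(K)}+|\nabla_{\Ghsm}\ehm|^2)$ on each element $K$. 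Then split $f_h$ and $g_h$ into constants plus Poincar\'e-controlled remainders. Using $\|\nabla_{\Ghsm}\ehm\|_{L^\infty}\|\nabla_{\Ghsm}\ehm\|_{L^2}\le h$ from \eqref{eq:ind_hypo1} closes the estimate.
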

Applying Lemma~\ref{lemma:NT_stab_ref} to \eqref{eq:N+TdotT} with $f_h=I_h \Nsm \delta_\tau X_h^m$ and $g_h = I_h \Tsm \delta_\tau X_h^m$, and using the estimates for the terms $\mathscr L_i^m$, we obtain
\begin{align}\label{eq:tan_stab_H1}
	&
	\tau^{-1/2}\|  I_h \Tsm \delta_\tau X_h^m \|_{L^2(\Ghsm)}  
	+
	\| \nabla_{\Ghsm} I_h \Tsm \delta_\tau X_h^m \|_{L^2(\Ghsm)}  
	\notag\\
	&
	\lesssim 
	\tau^{1/2}
	+
	 \tau^{-1/2}(h^2+\| \nabla_\Ghsm \ehm \|_{L^2(\Ghsm)}+ h^{-3/2} \| \nabla_\Ghsm \ehm \|_{L^2(\Ghsm)}^2 )  \notag\\
	&\quad+(1 + h^{-3/2} \| \nabla_\Ghsm \ehm \|_{L^2(\Ghsm)})  \| I_h \Nsm \delta_\tau X_h^m \|_{L^2(\Ghsm)}  .
\end{align}
Finally, the conversion formula \eqref{eq:geo_rel_31}, which converts $\delta_\tau X_h^m$ into $\delta_\tau \ehm$, together with the bound \eqref{W1infty-g} for $g^m$ gives
\begin{align}\label{eq:tan_stab_H1_e}
	&
	\tau^{-1/2}\|  I_h \Tsm \delta_\tau \ehm \|_{L^2(\Ghsm)}  
	+
	\| \nabla_{\Ghsm} I_h \Tsm \delta_\tau\ehm \|_{L^2(\Ghsm)}  
		\notag\\
	&
	\lesssim 
	\tau^{1/2}
	+
	\tau^{-1/2}(h^2+\| \nabla_\Ghsm \ehm \|_{L^2(\Ghsm)}+ h^{-3/2} \| \nabla_\Ghsm \ehm \|_{L^2(\Ghsm)}^2 )  \notag\\
	&\quad+(1 + h^{-3/2} \| \nabla_\Ghsm \ehm \|_{L^2(\Ghsm)})  \| I_h \Nsm \delta_\tau \ehm \|_{L^2(\Ghsm)}  .
\end{align}

Estimate \eqref{eq:tan_stab_H1_e} controls the $H^1$ semi-norm of the tangential error velocity in terms of the $L^2$ norm of the normal velocity and higher-order smaller geometric errors. 

\subsection{Velocity estimates}\label{sec:bbd_vel} 

We now test the error equation with the full error velocity. Choosing $\phi_h =  \delta_\tau\ehm$ in the error equation \eqref{eq:err_eq2} gives
\begin{align}\label{eq:err1}
	& \int_{\hat\Gamma_{h, *}^{m}}^h \delta_\tau\ehm \cdot \delta_\tau\ehm
	\notag\\
	&= 
	- \mathscr D^m( \delta_\tau\ehm) - \mathscr J^m(\delta_\tau\ehm) - \mathscr B^m(\ehm,  \delta_\tau\ehm) - \mathscr K^m( \delta_\tau\ehm)  \notag\\
	&\quad 
	- \mathscr A_{h, *}^N(e_h^{m+1},  \delta_\tau\ehm ) - \mathscr A_{h, *}^T(e_h^{m+1} - \hat{e}_h^m, \delta_\tau\ehm )  \notag\\
	&\quad- \sum_{i=1}^3 \mathscr F_i^m(\delta_\tau\ehm) + \mathscr A_{h, *}^N(\hat e_h^{m}, I_h\Tbhsm \delta_\tau\ehm) + \mathscr B^m(\hat e_h^{m},  I_h\Tbhsm\delta_\tau\ehm) + \mathscr Q^m( I_h\Tbhsm\delta_\tau\ehm) \notag\\
	&\leq
	- \mathscr D^m( \delta_\tau\ehm) - \mathscr J^m(\delta_\tau\ehm) - \mathscr B^m(\ehm,  \delta_\tau\ehm) - \mathscr K^m( \delta_\tau\ehm)  
	- \mathscr A_{h, *}^N(\ehm,  \delta_\tau\ehm )  \notag\\
	&\quad- \sum_{i=1}^3 \mathscr F_i^m(\delta_\tau\ehm) + \mathscr A_{h, *}^N(\hat e_h^{m},  I_h\Tbhsm \delta_\tau\ehm) + \mathscr B^m(\hat e_h^{m},  I_h\Tbhsm \delta_\tau\ehm) + \mathscr Q^m( I_h\Tbhsm \delta_\tau\ehm)
	,
\end{align}
where the final inequality drops the following two nonpositive terms:
\begin{align*}
- \tau \mathscr A_{h, *}^N(\delta_\tau\ehm, \delta_\tau\ehm)
\qquad
\mbox{and}\qquad - \tau \mathscr A_{h, *}^T( \delta_\tau\ehm, \delta_\tau\ehm) .
\end{align*}

Substituting this bound into the right-hand sides of \eqref{eq:K_est} and \eqref{eq:Q_est} yields
\begin{align}
	&| \mathscr K^m( \delta_\tau\ehm) | + | \mathscr Q^m(I_h \Tbhsm \delta_\tau\ehm) | \notag\\
	&\lesssim (\tau + \| \nabla_{\Ghsm} \ehm \|_{L^\infty(\Ghsm)}) \| \nabla_\Ghsm \ehm \|_{L^2(\Ghsm)} \| \delta_\tau\ehm \|_{H^1(\Ghsm)} \notag\\
	&\quad+ \tau \| \nabla_\Ghsm \ehm \|_{L^\infty(\Ghsm)}\| \nabla_\Ghsm \delta_\tau\ehm \|_{L^2(\Ghsm)} \| \nabla_\Ghsm  \delta_\tau\ehm \|_{L^2(\Ghsm)} 
	. \notag
\end{align}
The definitions of $\mathscr A_{h, *}^N(\cdot,\cdot)$, $\mathscr A_{h, *}^T(\cdot,\cdot)$ and $\mathscr B^m(\cdot,\cdot)$ in \eqref{def-As-AGs}--\eqref{def-Bm} immediately imply
\begin{align}
	|\mathscr A_{h, *}^N(u_h,v_h) | + |\mathscr A_{h, *}^T(u_h,v_h) | + |\mathscr B^m(u_h,v_h) | 
	&\lesssim \| u_h \|_{H^1(\Ghsm)} \| v_h \|_{H^1(\Ghsm)} , \notag
\end{align}
for any $u_h, v_h \in [S_h(\Ghsm)]^2$. Substituting these bounds, together with the estimates of $\mathscr F_i^m, i=1,2,3,$ from \eqref{Estimate-F1m}--\eqref{Estimate-F3m}, into the right-hand side of \eqref{eq:err1} and using the inverse inequality and the induction hypothesis \eqref{eq:ind_hypo1}, yield the full $L^2$ velocity estimate
\begin{align}\label{eq:vel_N}
	\| \delta_\tau\ehm \|_{L^2(\Ghsm)} 
	&\lesssim  h^{-1}(\tau + h^{2}) +  h^{-1} \| \nabla_\Ghsm \ehm \|_{L^2(\Ghsm)}
	\notag\\
	&\quad
	+
	h^{-1}\tau \| \nabla_\Ghsm \ehm \|_{L^\infty(\Ghsm)} \| \nabla_\Ghsm \delta_\tau\ehm \|_{L^2(\Ghsm)}
	\notag\\
	&\lesssim  h^{-1}(\tau + h^{2}) +  h^{-1} \| \nabla_\Ghsm \ehm \|_{L^2(\Ghsm)}
	,
\end{align}
where the last line follows from the absorption of $\delta_\tau\ehm$ into the left-hand side, according to the induction hypothesis \eqref{eq:ind_hypo1} and the step-size condition $\tau\simeq h^2$.
Combining this result with \eqref{eq:tan_stab_H1_e} gives the tangential velocity estimate
\begin{align}
	\| \nabla_{\Ghsm} I_h \Tsm \delta_\tau\ehm \|_{L^2(\Ghsm)}  
		&
		\lesssim 
		\tau^{1/2}
		+
		\tau^{-1/2}(h^2+\| \nabla_\Ghsm \ehm \|_{L^2(\Ghsm)}+ h^{-3/2} \| \nabla_\Ghsm \ehm \|_{L^2(\Ghsm)}^2 )  \notag\\
		&\quad+(1 + h^{-3/2} \| \nabla_\Ghsm \ehm \|_{L^2(\Ghsm)}) 
		\notag\\
	&\qquad\qquad\times
	 \big( h^{-1}(\tau + h^{2}) +  h^{-1} \| \nabla_\Ghsm \ehm \|_{L^2(\Ghsm)}  \big)  
	\notag\\
	&\lesssim 
	\tau^{1/2}
	+
	\tau^{-1/2} h^2
	+
	h^{-1}(\tau+h^2)
	\notag\\
	&\quad
	+
	(\tau^{-1/2}+h^{-1}+(\tau + h^{2})h^{-5/2}) \| \nabla_\Ghsm \ehm \|_{L^2(\Ghsm)}
	\notag\\
	&\quad
	+
	(\tau^{-1/2}h^{-3/2}+h^{-5/2}) \| \nabla_\Ghsm \ehm \|_{L^2(\Ghsm)}^2
	 . \notag
\end{align}
Under the step-size condition $\tau\simeq h^2$, the above estimate reduces to
\begin{align}\label{eq:tan_stab_H1_e2}
	\| \nabla_{\Ghsm} I_h \Tsm \delta_\tau\ehm \|_{L^2(\Ghsm)}  
	&\lesssim 
	h+ h^{-1} \| \nabla_\Ghsm \ehm \|_{L^2(\Ghsm)} + h^{-5/2} \| \nabla_\Ghsm \ehm \|_{L^2(\Ghsm)}^2
	.
\end{align}

Estimates \eqref{eq:vel_N} and \eqref{eq:tan_stab_H1_e2} control, respectively, the full velocity in $L^2$ and the tangential velocity in $H^1$. They are of vital importance in the later shape regularity analysis in Section~\ref{sec:bbd}.

\subsection{Displacement estimates}\label{sec:disc-est}

We next use the velocity estimates to compare $\Ghm, \GhM, \Ghsm, \GhsM$ and $\Gamma_{h,*}^{m+1}$. By Lemma~\ref{lemma:norm-equiv}, equivalence of the $L^p$ and $W^{1,p},p\in[1,\infty],$ norms on these discrete curves follows once their pairwise displacements are sufficiently small in the $W^{1,\infty}$ norm.

The velocity estimates \eqref{eq:vel_N} and \eqref{eq:tan_stab_H1_e2}, the induction hypothesis \eqref{eq:ind_hypo1}, and the step-size condition $\tau\simeq h^{2}$ give the preliminary bounds
\begin{align}\label{eq:a-priori-delta-e}
		\| \nabla_\Ghsm I_h\Tsm \delta_\tau\ehm \|_{L^2(\Ghsm)} 
	\lesssim h^{-1}
	,\qquad
	\| \delta_\tau\ehm \|_{L^2(\Ghsm)} 
	\lesssim h^{-1/4} .
\end{align}
We note that, by construction, $|\ehm(p)| \leq |e_h^m(p)|$ at any finite element node $p\in\mathcal N(\Ghsm)$.
This size relation naturally passes to the mass lumping discrete norm (see Appendix \ref{sec:disc-norm})
\begin{align}\label{eq:hat-stab}
	\| \ehm \|_{L^p_h(\hat\Gamma_{h,*}^{m})} \leq \| e_h^m \|_{L^p_h(\hat\Gamma_{h,*}^{m})}
	\quad\forall p\in[1,\infty]
	.
\end{align}
Using the triangle inequality, \eqref{eq:a-priori-delta-e}, the inverse inequality and the induction hypothesis \eqref{eq:ind_hypo1}, we obtain the following a priori estimates for $\eM$:
\begin{align}
	\| \eM \|_{L^2(\Ghsm)}
	&\leq
	\| \ehm \|_{L^2(\Ghsm)}
	+
	\tau
	\| \delta_\tau\ehm \|_{L^2(\Ghsm)}
	\lesssim h^{7/4}
	\label{eq:eM-L2-prior} , \\
	\| \eM \|_{L^\infty(\Ghsm)}
	&\leq
	\| \ehm \|_{L^\infty(\Ghsm)}
	+
	\tau
	\| \delta_\tau\ehm \|_{L^\infty(\Ghsm)}
	\lesssim h^{5/4}.
	\label{eq:eM-Linf-prior}
\end{align}
Then the geometric relations \eqref{eq:geo_rel_1}--\eqref{eq:geo_rel_2} imply the a priori estimates for $\ehM$:
\begin{align}
	\| \ehM \|_{L^2(\Ghsm)}
	&\lesssim
	\| \eM \|_{L^2(\Ghsm)}
	+
	\| \eM \|_{L^\infty(\Ghsm)} \| \eM \|_{L^2(\Ghsm)}
	\lesssim
	\| \eM \|_{L^2(\Ghsm)}
	\lesssim h^{7/4}
	\label{eq:ehM-L2-prior} ,\\
	\| \ehM \|_{L^\infty(\Ghsm)}
	&\lesssim
	\| \eM \|_{L^\infty(\Ghsm)}
	+
	\| \eM \|_{L^\infty(\Ghsm)}^2
	\lesssim
	\| \eM \|_{L^\infty(\Ghsm)}
	\lesssim h^{5/4}
	\label{eq:ehM-Linf-prior}
	,
\end{align}
and, for the $H^1$ norm,
\begin{align}
	\| \ehM \|_{H^1(\Ghsm)}
	&\lesssim
	\| \eM \|_{H^1(\Ghsm)}
	+
	h^{-1}\| \eM \|_{L^\infty(\Ghsm)} \| \eM \|_{L^2(\Ghsm)}
	\lesssim
	\| \eM \|_{H^1(\Ghsm)}
	\lesssim h^{3/4}
	\label{eq:ehM-H1-prior}
	.
\end{align}

Now we are ready to estimate the displacements. First,
\begin{align}\label{eq:hat_X_s_diff}
	&\| \hat X_{h,*}^{m+1} - \hat X_{h,*}^{m} \|_{L^{\infty}(\Ghsm)} \notag\\ 
	&\le 
	\| \hat X_{h,*}^{m+1} - X_{h}^{m+1} \|_{L^{\infty}(\Ghsm)}
	+\| X_{h}^{m+1} - X_{h}^{m} \|_{L^{\infty}(\Ghsm)}
	+ \| X_{h}^{m} - \hat X_{h,*}^{m} \|_{L^{\infty}(\Ghsm)} \notag\\ 
	&=
	\| \hat e_{h}^{m+1} \|_{L^{\infty}(\Ghsm)}
	+ \|\eM - \ehm - \tau I_h(H^m n^m - g^m)  \|_{L^{ \infty}(\Ghsm)} 
	+\| \hat e_{h}^m \|_{L^{\infty}(\Ghsm)} \notag\\
	&\lesssim
	\| \hat e_{h}^{m+1} \|_{L^{\infty}(\Ghsm)}
	+ \|\eM \|_{L^{ \infty}(\Ghsm)} 
	+\| \hat e_{h}^m \|_{L^{\infty}(\Ghsm)} + \tau  \notag\\
	&\lesssim
	\tau
	+
	 \| \ehm \|_{L^{ \infty}(\Ghsm)} 
	 +
	 \tau\| \delta_\tau\ehm \|_{L^{ \infty}(\Ghsm)}  
	 \lesssim h^{5/4}
	.
\end{align}
The same argument gives the $L^2$ bound:
\begin{align}
	\| \hat X_{h,*}^{m+1} - \hat X_{h,*}^{m} \|_{L^{2}(\Ghsm)}
	&\lesssim
	\tau
	+
	\| \ehm \|_{L^{ 2}(\Ghsm)} 
	+
	\tau\| \delta_\tau\ehm \|_{L^{ 2}(\Ghsm)}  
	. \notag
\end{align}
Using the geometric relation \eqref{eq:X-id1}, we also have
\begin{align}
	\label{eq:hat_X_s_diff3}
\|  X_{h,*}^{m+1} - \hat X_{h,*}^{m} \|_{W^{1,\infty}(\Ghsm)} \lesssim \tau . 
\end{align}

The field $\nsM$ is a smooth extension of $n(\cdot,t_{m+1})$ from $\Gamma^{m+1}$ to a neighborhood of $\Gamma^{m+1}$ that contains $\Gamma^m$ for sufficiently small $\tau$. The gradient of $\nsM$ is bounded uniformly in $m$ and $\tau$. 
Estimate \eqref{eq:hat_X_s_diff3} then gives
\begin{align}
|\nsM-\nsm| 
=\,& 
|n(\hat X_{h,*}^{m+1},t_{m+1}) - n(\hat X_{h,*}^{m},t_{m})| \notag\\
=\, &|n_*^{m+1}(\hat X_{h,*}^{m+1}) -n_*^{m+1}(\hat X_{h,*}^{m}) 
+ n_*^{m+1}(\hat X_{h,*}^{m}) - n_*^{m+1}(X_{h,*}^{m+1}) \notag\\
&+ n(X_{h,*}^{m+1},t_{m+1}) - n(\hat X_{h,*}^{m},t_{m}) | \notag\\
\lesssim\, &|\hat X_{h,*}^{m+1}-\hat X_{h,*}^{m}|
+ |\hat X_{h,*}^{m} - X_{h,*}^{m+1}| 
+ \tau \quad\mbox{at the nodes} \notag\\
\lesssim\, &|\hat X_{h,*}^{m+1}-\hat X_{h,*}^{m}|
+ \tau \quad\mbox{at the nodes} , \notag
\end{align}
where the penultimate inequality uses the smoothness of $n_*^{m+1}$ in a neighborhood of $\Gamma^{m+1}$, and the final inequality uses \eqref{eq:hat_X_s_diff3}. 

Combining \eqref{eq:geo_rel_4}--\eqref{eq:geo_rel_6} with the velocity estimates yields
\begin{align}\label{eq:hat-X-diff-W1inf}
	&\| \hat X_{h,*}^{m+1} - \hat X_{h,*}^{m} \|_{W^{1,\infty}(\Ghsm)} \notag\\
	&\leq \| I_h \Nsm (\hat X_{h,*}^{m+1} - \hat X_{h,*}^{m}) \|_{W^{1,\infty}(\Ghsm)} + \| I_h \Tsm (\hat X_{h,*}^{m+1} - \hat X_{h,*}^{m}) \|_{W^{1,\infty}(\Ghsm)} \notag\\
	&\lesssim \tau + h^{-1}(\tau^2 + \| I_h \Tsm (\hat X_{h,*}^{m+1} - \hat X_{h,*}^{m}) \|_{L^{\infty}(\Ghsm)}^2) + \| I_h \Tsm (\hat X_{h,*}^{m+1} - \hat X_{h,*}^{m}) \|_{W^{1,\infty}(\Ghsm)} \notag\\
	&\hspace{147pt}\mbox{(by the inverse inequality and \eqref{eq:geo_rel_4}--\eqref{eq:geo_rel_5})} \notag\\
	&\leq (1+h^{-1}\tau) \tau 
	+
	h^{-1}	\| I_h \Tsm (\hat X_{h,*}^{m+1} - \hat X_{h,*}^{m}) \|_{L^{\infty}(\Ghsm)}^2
	\notag\\
	&\quad
	+ \| I_h \Tsm ( X_{h}^{m+1} - X_{h}^{m}) \|_{W^{1,\infty}(\Ghsm)} +  \| I_h \Tsm ((\NsM\circ\hat X_{h,*}^{m+1} - \Nsm\circ\hat X_{h,*}^{m})\ehM) \|_{W^{1,\infty}(\Ghsm)} \notag\\
	&\hspace{186pt}\mbox{(using \eqref{eq:geo_rel_6})} \notag\\
	&\lesssim 
	\tau 
	+ 
	\tau \| I_h \Tsm \delta_\tau\ehm \|_{W^{1,\infty}(\Ghsm)} 
	\quad\,\mbox{(using \eqref{W1infty-g} and \eqref{eq:geo_rel_3})} 
	\notag\\
	&\quad
	+
	h^{-1}	\| I_h \Tsm (\hat X_{h,*}^{m+1} - \hat X_{h,*}^{m}) \|_{L^{\infty}(\Ghsm)}^2
	\notag\\
	&\qquad+ h^{-1/2}\| \hat X_{h,*}^{m+1} - \hat X_{h,*}^{m} \|_{W^{1,\infty}(\Ghsm)} \| \ehM \|_{H^{1}(\Ghsm)}
	,
\end{align}
	where the last inequality results from the following derivation using the super-approximation (cf. Lemma~\ref{lemma:super_conv-nonlinear}):
	\begin{align}
		&\| I_h [\Tsm (\NsM\circ\hat X_{h,*}^{m+1} - \Nsm\circ\hat X_{h,*}^{m})\ehM] \|_{W^{1,\infty}(\Ghsm)}
		\notag\\
		&\lesssim
		\| (1-I_h) [I_h\Tsm (\NsM\circ\hat X_{h,*}^{m+1} - \Nsm\circ\hat X_{h,*}^{m})\ehM] \|_{W^{1,\infty}(\Ghsm)}
		\notag\\
		&\quad+
		\| I_h \Tsm (\NsM\circ\hat X_{h,*}^{m+1} - \Nsm\circ\hat X_{h,*}^{m})\ehM \|_{W^{1,\infty}(\Ghsm)}
		\notag\\
		&\lesssim
		\| I_h\Tsm (\NsM\circ\hat X_{h,*}^{m+1} - \Nsm\circ\hat X_{h,*}^{m}) \|_{W^{1,\infty}(\Ghsm)}
		\| \ehM \|_{L^{\infty}(\Ghsm)}
		\notag\\
		&\quad+
		\| I_h\Tsm (\NsM\circ\hat X_{h,*}^{m+1} - \Nsm\circ\hat X_{h,*}^{m}) \|_{L^{\infty}(\Ghsm)}
		\| \ehM \|_{W^{1,\infty}(\Ghsm)}
		\notag\\
		&\lesssim
		(\tau+\| \hat X_{h,*}^{m+1} -  \hat X_{h,*}^{m} \|_{W^{1,\infty}(\Ghsm)})
		\| \ehM \|_{L^{\infty}(\Ghsm)}
		\notag\\
		&\quad+
		(\tau+\| \hat X_{h,*}^{m+1} -  \hat X_{h,*}^{m} \|_{L^{\infty}(\Ghsm)})
		\| \ehM \|_{W^{1,\infty}(\Ghsm)}. \notag
	\end{align}
Using the inverse inequality, \eqref{eq:ehM-L2-prior} and \eqref{eq:hat_X_s_diff}, the last two terms on the right-hand side of \eqref{eq:hat-X-diff-W1inf} can be absorbed into its left-hand side, and we have
\begin{align}\label{eq:hat-X-diff-W1inf1}
	\| \hat X_{h,*}^{m+1} - \hat X_{h,*}^{m} \|_{W^{1,\infty}(\Ghsm)}
	\lesssim \
	\tau + \tau \| I_h \Tsm \delta_\tau\ehm \|_{W^{1,\infty}(\Ghsm)}
	,
\end{align}
and the same argument gives the $H^1$ bound:
\begin{align}
	\| \hat X_{h,*}^{m+1} - \hat X_{h,*}^{m} \|_{H^{1}(\Ghsm)} 
	\lesssim \tau + \tau \| \nabla_\Ghsm I_h \Tsm \delta_\tau\ehm \|_{L^{2}(\Ghsm)} 
	. \notag
\end{align}
Substitution of \eqref{eq:a-priori-delta-e} yields the smallness estimates
\begin{align}
	\| \hat X_{h,*}^{m+1} - \hat X_{h,*}^{m} \|_{W^{1,\infty}(\Ghsm)} &\lesssim h^{1/2} , \notag
\end{align} 
and
\begin{align}
	\| X_{h}^{m+1} - X_{h}^{m} \|_{W^{1,\infty}(\Ghsm)} 
	&
	\leq
	\tau \| I_h (H^m n^m) \|_{W^{1,\infty}(\Ghsm)}
	+
	\tau \| I_h g^m \|_{W^{1,\infty}(\Ghsm)}
	+
	 \tau\| \delta_\tau \ehm \|_{W^{1,\infty}(\Ghsm)} 
	\lesssim h^{1/4} . \notag
\end{align}
Moreover, from the induction hypothesis \eqref{eq:ind_hypo1},
\begin{align}
	\|X_h^m - \hat X_{h,*}^m \|_{W^{1,\infty}(\Ghsm)}=\|\ehm \|_{W^{1,\infty}(\Ghsm)}\lesssim h^{1/4} . \notag
\end{align}
The preceding smallness estimates and Lemma~\ref{lemma:norm-equiv} show that, for all $p\in[1,\infty]$, the $L^p$ and $W^{1,p}$ norms of finite element functions with a common nodal vector are equivalent on $\Ghm, \GhM, \Ghsm, \GhsM$ and $\Gamma_{h,*}^{m+1}$. 


\subsection{Main error estimates}

In this section, we combine the consistency and stability results to conclude the main error estimate up to a shape regularity constant.
The norm conversion lemma below is proved in Appendix \ref{sec:e-convert}.
\begin{lemma}\label{lemma:e-convert}
	For any $m\geq0$, we have
	\begin{align}
		&\| \ehm \|_{L^2_h(\hat\Gamma_{h,*}^{m})}^2 - \| e_h^m \|_{L^2_h(\hat\Gamma_{h,*}^{m-1})}^2
		\notag\\
		&
		\lesssim
		\tau \Big( 
		\epsilon^{-1}\| \ehm \|_{L^2(\Ghsm)}^2 
		+ 
		\epsilon^{-1} \|  e_h^{m} \|_{L^2(\Ghsm)}^2
		+ 
		\epsilon^{-1} \| \hat e_h^{m-1} \|_{L^2(\Ghsm)}^2
		+ 
		\epsilon^{-1} \|  e_h^{m-1} \|_{L^2(\Ghsm)}^2
		\notag\\
		&\qquad
		+
		\epsilon\| \nabla_\Ghsm \ehm \|_{L^2(\Ghsm)}^2 
		+ 
		\epsilon \| \nabla_\Ghsm e_h^{m} \|_{L^2(\Ghsm)}^2
		+ 
		\epsilon \| \nabla_\Ghsm \hat e_h^{m-1} \|_{L^2(\Ghsm)}^2
		+ 
		\epsilon \| \nabla_\Ghsm e_h^{m-1} \|_{L^2(\Ghsm)}^2
		\Big) ,
		\notag
	\end{align}
	where, for $m=0$, we use the convention $\hat\Gamma_{h,*}^{-1} = \hat\Gamma_{h,*}^{0}$, $e_h^{-1} = 0$, $\hat e_h^{-1} = 0$ and $e_h^0 = \hat e_h^0$.
\end{lemma}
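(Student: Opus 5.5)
We prove Lemma~\ref{lemma:e-convert} by working with nodal values. Since the lumped norm is
\[
\| v_h \|_{L^2_h(\Sigma)}^2=\sum_{K}\frac{|K|}{2}\sum_{p\in\mathcal N(K)}|v_h(p)|^2,
\]
the difference splits into two parts:
\[
\| \ehm \|_{L^2_h(\Ghsm)}^2 - \| e_h^m \|_{L^2_h(\hat\Gamma_{h,*}^{m-1})}^2
=\Big(\| \ehm \|_{L^2_h(\Ghsm)}^2-\| e_h^m \|_{L^2_h(\Ghsm)}^2\Big)
+\Big(\| e_h^m \|_{L^2_h(\Ghsm)}^2-\| e_h^m \|_{L^2_h(\hat\Gamma_{h,*}^{m-1})}^2\Big).
\]
The first bracket is nonpositive by \eqref{eq:hat-stab}, i.e.\ $|\ehm(p)|\le|e_h^m(p)|$ at every node, so it can be dropped. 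The whole task is therefore to bound the second bracket. This bracket measures how the same nodal vector $e_h^m$ is weighted on two consecutive consistency curves. For $m=0$ the conventions make the second bracket vanish, and there is nothing to prove.

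For the second bracket we use the fundamental theorem of calculus along the linear interpolation $\hat\Gamma^{m-1}_{h,*}+\theta(\hat X_{h,*}^m-\hat X_{h,*}^{m-1})$, exactly as in the treatment of $\mathscr J^m$ via Lemma~\ref{lemma:ud} (item 6). This gives
\[
\int_0^1\int^h_{\Gamma_\theta}|e_h^m|^2\,\nabla_{\Gamma_\theta}\cdot(\hat X_{h,*}^m-\hat X_{h,*}^{m-1})\,\d\theta .
\]
Equivalently, per element, the bracket equals $\frac12(|K^m|-|K^{m-1}|)(|e_h^m(p_1)|^2+|e_h^m(p_2)|^2)$. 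The key point is that a crude bound by $\|\nabla(\hat X_{h,*}^m-\hat X_{h,*}^{m-1})\|_{L^\infty}\lesssim h^{1/2}$ is far too weak, because we need a factor $\tau$. We therefore split the displacement $\hat X_{h,*}^m-\hat X_{h,*}^{m-1}$ as in \eqref{eq:geo_rel_4}--\eqref{eq:geo_rel_6}:
\begin{itemize}
\item the normal part $I_h N_*^{m-1}(\cdot)$ equals the exact flow increment $\tau(-Hn+g)$ up to $\rho_h$;
\item the tangential part is $I_hT_*(X_h^m-X_h^{m-1})$ minus a term $(N_*^m-N_*^{m-1})\hat e_h^m$.
\end{itemize}

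We estimate the contributions one at a time.
\begin{itemize}
\item The exact normal increment has $W^{1,\infty}$-norm $O(\tau)$. It contributes $\tau\|e_h^m\|_{L^2}^2$, after using norm equivalence (Lemma~\ref{lemma:norm-equiv}) and the discrete norm equivalences of the appendix.
\item The remainder $\rho_h$ is quadratic. Its nodal size is $\tau^2+|T_*\Delta\hat X|^2$, so with the inverse inequality and the displacement bounds of Section~\ref{sec:disc-est} it is also $O(\tau)$ in $W^{1,\infty}$ up to harmless factors.
\item The term involving $(N_*^m-N_*^{m-1})\hat e_h^m$ is handled by the super-approximation estimate of Lemma~\ref{lemma:super_conv-nonlinear}, as in \eqref{eq:hat-X-diff-W1inf}.
\end{itemize}

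The delicate piece is the tangential numerical displacement $\tau I_hT_*\delta_\tau X_h^{m-1}$. Here we avoid $L^\infty$ gradients. Instead we integrate by parts on each element, moving the divergence off the displacement. This produces the term
\[
\tau\int^h_{\Gamma_\theta}I_hT_*\delta_\tau X_h^{m-1}\cdot\nabla|e_h^m|^2 ,
\]
which, with the lumped/consistent comparison, is bounded by
\[
\tau\|I_hT_*\delta_\tau X_h^{m-1}\|_{L^\infty}\,\|e_h^m\|_{L^2}\,\|\nabla e_h^m\|_{L^2}.
\]
Then $\|I_hT_*\delta_\tau X_h^{m-1}\|_{L^\infty}$ is shown to be $O(1)$ by combining \eqref{eq:tan_stab_H1}, \eqref{eq:vel_N}, the induction hypothesis \eqref{eq:ind_hypo1} and $\tau\simeq h^2$. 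The $\tau^{-1/2}$-weighted $L^2$ bound in \eqref{eq:tan_stab_H1} gives $\|I_hT_*\delta_\tau X\|_{L^2}\lesssim \tau^{1/2}h^{-1/4}$-type smallness, so an inverse estimate yields $L^\infty$ boundedness. Young's inequality then turns $\|e_h^m\|_{L^2}\|\nabla e_h^m\|_{L^2}$ into $\epsilon^{-1}\|e_h^m\|^2+\epsilon\|\nabla e_h^m\|^2$.

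The terms carrying index $m-1$ arise when the relation \eqref{eq:geo_rel_3} is applied at level $m-1$. The converted error displacement $e_h^m-\hat e_h^{m-1}$, and the remainder quantities $\hat e_h^{m}$, $\hat e_h^{m-1}$, $e_h^{m-1}$, enter in the same bilinear fashion and are bounded the same way, with norms transferred to $\Ghsm$ by norm equivalence.

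\textbf{Main obstacle.} The hardest step is securing the full factor $\tau$ for the tangential displacement without losing powers of $h$. I would first try the elementwise integration by parts described above combined with the $L^\infty$ bound from \eqref{eq:tan_stab_H1}. If that is insufficient, the fallback is to use the $H^1$ tangential bound with the pairing $\|\nabla I_hT_*\delta_\tau X\|_{L^2}\|e_h^m\|_{L^4}^2$ and a discrete Gagliardo--Nirenberg inequality.
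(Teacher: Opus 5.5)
\textbf{Where the argument breaks.} Your splitting, the disposal of the nonpositive part via \eqref{eq:hat-stab}, and the fundamental-theorem-of-calculus step all agree with the paper. The estimate that carries your argument, however, does not follow from what you cite. You assert that \eqref{eq:tan_stab_H1}, \eqref{eq:vel_N}, \eqref{eq:ind_hypo1} and $\tau\simeq h^2$ give $\|I_hT_*^{m-1}\delta_\tau X_h^{m-1}\|_{L^\infty}=O(1)$.

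Multiply \eqref{eq:tan_stab_H1} by $\tau^{1/2}\simeq h$ and use only what the induction hypothesis provides, namely $\|\nabla\hat e_h^{m-1}\|_{L^2}\le h^{3/4}$ and $\|\delta_\tau\hat e_h^{m-1}\|_{L^2}\lesssim h^{-1/4}$ from \eqref{eq:a-priori-delta-e}. Then the terms $h^{-3/2}\|\nabla\hat e_h^{m-1}\|_{L^2}^2$ and $\tau^{1/2}h^{-3/2}\|\nabla\hat e_h^{m-1}\|_{L^2}\|I_hN_*^{m-1}\delta_\tau X_h^{m-1}\|_{L^2}$ are each only $O(1)$. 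So the tangential velocity is $O(1)$ in $L^2$, not of size $\tau^{1/2}h^{-1/4}$, and the inverse estimate gives only $O(h^{-1/2})$ in $L^\infty$.

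Your trilinear term is then $\tau h^{-1/2}\|e_h^m\|_{L^2}\|\nabla e_h^m\|_{L^2}$. Young's inequality turns this into $\epsilon^{-1}h^{-1}\|e_h^m\|_{L^2}^2$, which is not of the admissible form.

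The missing idea is not to bound the velocity by a constant at all. Keep its explicit dependence on $\|\nabla\hat e_h^{m-1}\|_{L^2}$, and spend the induction bound $\|e_h^m\|_{L^2}\le h^{7/4}$ on the negative powers of $h$. This is exactly why $\epsilon\|\nabla\hat e_h^{m-1}\|_{L^2}^2$ appears on the right-hand side of the lemma. Your remark that the level-$(m-1)$ quantities enter ``in the same bilinear fashion'' does not supply this: bounding $e_h^m-\hat e_h^{m-1}$ by $\|e_h^m\|+\|\hat e_h^{m-1}\|$ loses the factor $\tau$.

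\textbf{How the paper closes the estimate.} Once you make that repair, the elementwise integration by parts is unnecessary. As written, it also ignores the nodal terms $\sum_p|e_h^m(p)|^2\,w(p)\cdot(\mu_\theta(p+)+\mu_\theta(p-))$ coming from the conormal jumps of the polygonal intermediate curves, and these would need their own estimate.

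The paper keeps exactly the $L^\infty$-gradient bound you dismissed, $\mathscr M_1^m\lesssim\|\nabla(\hat X_{h,*}^m-\hat X_{h,*}^{m-1})\|_{L^\infty}\|e_h^m\|_{L^2}^2$. It does not insert the $h^{1/2}$ smallness bound there. Instead it uses the already proved displacement estimate \eqref{eq:hat-X-diff-W1inf1} at level $m-1$, so your re-derivation of the normal/tangential splitting is redundant. That estimate gives the factor $\tau(1+\|I_hT_*^{m-1}\delta_\tau\hat e_h^{m-1}\|_{W^{1,\infty}})$.

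The inverse inequality and \eqref{eq:tan_stab_H1_e2} then turn this factor into $\tau(1+h^{1/2}+h^{-3/2}\|\nabla\hat e_h^{m-1}\|_{L^2}+h^{-3}\|\nabla\hat e_h^{m-1}\|_{L^2}^2)$. The induction bound on $\|e_h^m\|_{L^2}$ handles the rest. For example, $\tau h^{-3/2}\|\nabla\hat e_h^{m-1}\|_{L^2}\|e_h^m\|_{L^2}^2\le\tau h^{1/4}\|\nabla\hat e_h^{m-1}\|_{L^2}\|e_h^m\|_{L^2}$, and Young's inequality closes the estimate.

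This is essentially your fallback: the $H^1$ tangential bound, paired with $\|e_h^m\|_{L^2}^2$ through an inverse estimate rather than with $L^4$ norms and Gagliardo--Nirenberg.
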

We first observe that
\begin{align}
	& \frac{1}{\tau} (\| \eM \|_{L_h^2(\hat\Gamma_{h,*}^{m})}^2 - \| e_h^m \|_{L^2_h(\hat\Gamma_{h,*}^{m-1})}^2) + \mathscr A_{h, *}(e_h^{m+1},\eM) \notag\\
	&= \frac{1}{\tau}(\| \eM \|_{L_h^2(\hat\Gamma_{h,*}^{m})}^2 - \| \ehm \|_{L^2_h(\hat\Gamma_{h,*}^{m})}^2)  + \mathscr A_{h, *}(e_h^{m+1},\eM) \notag\\
	&\quad+ \frac{1}{\tau}(\| \ehm \|_{L^2_h(\hat\Gamma_{h,*}^{m})}^2
	-
	\| e_h^m \|_{L^2_h(\hat\Gamma_{h,*}^{m-1})}^2
	) . \notag
\end{align}
Then, by testing the error equation \eqref{eq:err_eq2} with $\phi_h = \eM$ and applying the linear and bilinear estimates developed in Sections~\ref{sec:cons_err} and \ref{sec:lin-bilin-est} and the norm conversion lemma (Lemma~\ref{lemma:e-convert}), we continue the derivation as follows:
\begin{align}
	& \frac{1}{\tau}(\| \eM \|_{L_h^2(\hat\Gamma_{h,*}^{m})}^2 - \| e_h^m \|_{L^2_h(\hat\Gamma_{h,*}^{m-1})}^2) + \mathscr A_{h, *}(e_h^{m+1},\eM) \notag\\
	&\lesssim \mathscr A_{h, *}^T(\hat{e}_h^m,\ehm) - \mathscr B^m(\hat{e}_h^m,\eM) - \mathscr J^m(\eM) - \mathscr K^m(\eM) - \mathscr D^m(\eM) \notag\\
	&\quad- \sum_{i=1}^3 \mathscr F_i^m(\eM) + \mathscr A_{h, *}^N(\hat e_h^{m}, I_h \bar T_{h,*}^m \eM) + \mathscr B^m(\hat e_h^{m}, I_h \bar T_{h,*}^m \eM) + \mathscr Q^m(I_h \bar T_{h,*}^m \eM) \notag\\
	&\quad
	+
	\frac{1}{\tau}(\| \ehm \|_{L^2_h(\hat\Gamma_{h,*}^{m})}^2
	-
	\| e_h^m \|_{L^2_h(\hat\Gamma_{h,*}^{m-1})}^2)
	\notag\\
	&\lesssim \epsilon^{-1} (\tau + h^{2})^2 + \epsilon^{-1}\big(\| \hat e_h^m \|_{L^2(\Ghsm)}^2 + \| e_h^m \|_{L^2(\Ghsm)}^2 + \| \hat e_h^{m-1} \|_{L^2(\hat\Gamma_{h, *}^{m})}^2 + \| e_h^{m-1} \|_{L^2(\Ghsm)}^2  \big) \notag\\
	&\quad+  \epsilon \big(\| \nabla_{\Ghsm} \hat e_h^{m} \|_{L^2(\Ghsm)}^2 + \| \nabla_{\Ghsm} e_h^{m} \|_{L^2(\Ghsm)}^2 + \| \nabla_{\Ghsm} \hat e_h^{m-1} \|_{L^2(\Ghsm)}^2 + \| \nabla_{\Ghsm} e_h^{m-1} \|_{L^2(\Ghsm)}^2 \big) . \notag
\end{align} 
Using the error conversion bounds \eqref{eq:ehM-L2-prior}--\eqref{eq:ehM-H1-prior}, the above estimate reduces to
\begin{align}
	& \frac{ \| \eM \|_{L_h^2(\hat\Gamma_{h,*}^{m})}^2 - \| e_h^m \|_{L^2_h(\hat\Gamma_{h,*}^{m-1})}^2 }{\tau}
	+ C^{-1} \| \nabla_{\Ghsm} \eM \|_{L^2(\Ghsm)}^2  \notag\\
	&\lesssim
	\epsilon^{-1} (\tau+ h^{2})^2
		+ \epsilon^{-1}\| e_h^m \|_{L_h^2(\Ghsm)}^2 + \epsilon^{-1}\| e_h^{m-1} \|_{L_h^2(\Ghsm)}^2
		\notag\\
		&\quad
		 + \epsilon\| \nabla_{\Ghsm} e_h^m \|_{L^2(\Ghsm)}^2 + \epsilon\| \nabla_{\Ghsm} e_h^{m-1} \|_{L^2(\Ghsm)}^2 , \notag
\end{align} 
where $\epsilon$ is an arbitrarily small positive constant. Finally, the discrete Gr\"onwall inequality, the norm equivalence from Section~\ref{sec:disc-est} and the initial approximation properties yield
\begin{align}\label{eq:err_fin1}
	\max_{0\le m\le l+1} \| e_h^{m}  \|_{L^2(\Ghsm)}^2 
	+  \sum\limits_{m = 0}^{l+1} \tau \| \nabla_{\Ghsm} 
	e_h^{m} \|_{L^2(\Ghsm)}^2 
	\le C_{\kappa_l}  ( \tau +  h^{2})^2 ,
\end{align}
for sufficiently small $h\le h_{\kappa_l}$. Using the norm conversion formulas \eqref{eq:hat-stab} and \eqref{eq:ehM-H1-prior}, we can convert $e_h^m$ to $\ehm$ in \eqref{eq:err_fin1}. It remains to prove that the shape-regularity constant $\kappa_l$ is bounded uniformly in time; this is the purpose of the next section.


\subsection{Shape regularity analysis}
\label{sec:bbd}



According to the geometric relations \eqref{eq:geo_rel_4}--\eqref{eq:geo_rel_6}, we decompose as follows:
\begin{align*}
&\|\hat X_{h,*}^{m + 1} -\hat X_{h,*}^{m}\|_{W^{1,\infty}(\Gamma_{h,\rm f}^0)} \notag\\
&\le \| I_h [(Y^{m + 1} - {\rm id}_\Gm)\circ a^m \circ \hat X_{h,*}^{m} ]\|_{W^{1,\infty}(\Gamma_{h,\rm f}^0)}
+ \| \rho_h^m\circ \hat X_{h,*}^{m} \|_{W^{1,\infty}(\Gamma_{h,\rm f}^0)} \\
&\quad+\| I_h[ I_h(T_*^m\circ \hat X_{h,*}^{m}) I_h (N_*^{m+1}\circ \hat X_{h,*}^{m+1}-N_*^{m}\circ \hat X_{h,*}^{m})  (\hat e_{h}^{m + 1} \circ \hat X_{h,*}^{m})]\|_{W^{1,\infty}(\Gamma_{h,\rm f}^0)} \\
&\quad+\| I_h[(T_*^m \circ \hat X_{h,*}^{m} ) (X_{h}^{m + 1} - X_{h}^{m})  ] \|_{W^{1,\infty}(\Gamma_{h,\rm f}^0)} \\
&=: \mathscr E_1^m + \mathscr E_2^m + \mathscr E_3^m. 
\end{align*} 
The stability of $I_h$ on $C^0(\Gamma_{h,\rm f}^0)\cap W^{1,\infty}(\Gamma_{h,\rm f}^0)$, the chain rule, the inverse inequality, and \eqref{eq:geo_rel_5} yield
\begin{align*}
\mathscr E_1^m 
&\le C_0 \| (Y^{m + 1} - {\rm id}_\Gm)\circ a^m \circ \hat X_{h,*}^{m} \|_{W^{1,\infty}(\Gamma_{h,\rm f}^0)}
+
 \| \rho_h^m\circ \hat X_{h,*}^{m} \|_{W^{1,\infty}(\Gamma_{h,\rm f}^0)}
 \\
&\le C_0 \| \nabla_{\Gm} [(Y^{m + 1} - {\rm id}_\Gm)]\circ a^m \circ \hat X_{h,*}^{m} 
\,\nabla_{\Gamma_{h,\rm f}^0} (a^m \circ\hat X_{h,*}^{m}) \|_{L^{\infty}(\Gamma_{h,\rm f}^0)} \\
&\quad
+ C_0 \| Y^{m + 1} - {\rm id}_\Gm \|_{L^{\infty}(\Gm)}  
+ C_0 h^{-1} \| \rho_h^m\circ \hat X_{h,*}^{m} \|_{L^{\infty}(\Gamma_{h,\rm f}^0)}
\\
&\le C_0\tau\|\hat X_{h,*}^{m} \|_{W^{1,\infty}(\Gamma_{h,\rm f}^0)} 
+ C_0\tau + C_0 h^{-1} \big(\tau^2 + \|I_h \Tsm (\hat X_{h,*}^{m+1} - \hat X_{h,*}^{m}) \|_{L^\infty(\Ghsm)}^2\big) \notag\\
&\le C_0\tau\|\hat X_{h,*}^{m} \|_{W^{1,\infty}(\Gamma_{h,\rm f}^0)} 
+ C_0(1+h^{-1}\tau)\tau +  \frac{1}{4}\|I_h \Tsm (\hat X_{h,*}^{m+1} - \hat X_{h,*}^{m}) \|_{L^\infty(\Ghsm)}
,
\end{align*}
where we have used \eqref{eq:hat_X_s_diff} in the last inequality.

The $E_2^m$ term can be estimated as
\begin{align*}
\mathscr E_2^m 
&\leq
(\tau+\|I_h \Tsm (\hat X_{h,*}^{m+1} - \hat X_{h,*}^{m}) \|_{L^\infty(\Ghso)}) \| \nabla_\Ghso \ehM \|_{L^{\infty}(\Ghso)}
\notag\\
&\quad
+
(\tau+\|I_h \Tsm (\hat X_{h,*}^{m+1} - \hat X_{h,*}^{m}) \|_{W^{1,\infty}(\Ghso)}) \| \ehM \|_{L^{\infty}(\Ghso)}
\notag\\
&\leq
\tau \| \ehM \|_{W^{1,\infty}(\Ghso)}
+\frac{1}{4}\|I_h \Tsm (\hat X_{h,*}^{m+1} - \hat X_{h,*}^{m}) \|_{W^{1,\infty}(\Ghso)} .
\end{align*} 
Here we have used the nonlinear super-approximation estimate (cf. Lemma~\ref{lemma:super_conv-nonlinear}) for $N_*^{m+1}\circ \hat X_{h,*}^{m+1}-N_*^{m}\circ \hat X_{h,*}^{m}$. We also have used the boundedness $\| \hat e_{h}^{m + 1} \circ \hat X_{h,*}^{m} \|_{W^{1,\infty}(\Gamma_{h,\rm f}^0)}\le C_{\kappa_l} h^{-3/2}(\tau+h^{2})$, which follows from the main error estimate \eqref{eq:err_fin1} and the inverse inequality.

Lastly, for $\mathscr E_3^m$, we define $g^m_{\rm f} = g^m \circ a^m \circ \hat X_{h,*}^m$ on $\Gamma_{h,\rm f}^0$ and derive
\begin{align*} 
\mathscr E_3^m&=\| I_h[(T_*^m \circ \hat X_{h,*}^{m} ) (X_{h}^{m + 1} - X_{h}^{m})  ]\|_{W^{1, \infty}(\Gamma_{h,\rm f}^0)} \notag\\
&\leq \| I_h[(T_*^m \circ \hat X_{h,*}^{m} ) (X_{h}^{m+1} - X_{h}^{m} + \tau I_h[ (H^m n^m)\circ a^m\circ\hat X_{h,*}^{m}]) ] \|_{W^{1, \infty}(\Gamma_{h,\rm f}^0)} 
\notag\\
&\quad
	+ \tau\| I_h[(T_*^m \circ \hat X_{h,*}^{m} )((H^m n^m)\circ a^m\circ\hat X_{h,*}^{m})] \|_{W^{1, \infty}(\Gamma_{h,\rm f}^0)} \notag\\
&= \| I_h[(T_*^m \circ \hat X_{h,*}^{m} ) (\eM - \ehm  + \tau I_h g^m_{\rm f} )]  \|_{W^{1, \infty}(\Gamma_{h,\rm f}^0)} 
\\
&\leq 
C_{\kappa_l}\tau^2
+
\| I_h[(T_*^m \circ \hat X_{h,*}^{m} ) (\eM - \ehm )]  \|_{W^{1, \infty}(\Gamma_{h,\rm f}^0)}  ,
\end{align*}  
where the last inequality follows from
\begin{align*}
\| I_h g^m_{\rm f} \|_{W^{1, \infty}(\Gamma_{h,\rm f}^0)}  &\le C_{\kappa_l} \tau 
,\\
(\Tsm\circ \hat X_{h,*}^m) ((H^m n^m)\circ a^m\circ \hat X_{h,*}^m) &=0
\qquad\mbox{on $\Ghso$} 
. 
\end{align*}

Collecting the estimates for the $E^m$ terms and using absorption gives
\begin{align}\label{eq:X-hat-diff-W1inf}
	&
	\|\hat X_{h,*}^{m + 1} -\hat X_{h,*}^{m}\|_{W^{1,\infty}(\Gamma_{h,\rm f}^0)} 
	\leq  
	C_0 \tau (1 + \| \hat X_{h,*}^m \|_{W^{1, \infty}(\Ghso)}) 
	+
	C_0\tau\| I_h\Tsm\delta_\tau\ehm \|_{W^{1,\infty}(\Gamma_{h,\rm f}^0)}  ,
\end{align}
for sufficiently small $h\leq h_{\kappa_l}$.

Using the triangle inequality, \eqref{eq:X-hat-diff-W1inf}, the main error estimate \eqref{eq:err_fin1} and the step-size condition $\tau\simeq h^2$, we derive that for sufficiently small $h\leq h_{\kappa_l}$
\begin{align}\label{eq:X-hat-diff-triangle}
	&\|  \hat X_{h,*}^{m+1} \|_{W^{1, \infty}(\Ghso)} - \|  \hat X_{h,*}^0 \|_{W^{1, \infty}(\Ghso)} \notag\\
	&\le \sum_{j=0}^m\|\hat X_{h,*}^{j + 1} -\hat X_{h,*}^{j}\|_{W^{1,\infty}(\Gamma_{h,\rm f}^0)} \notag\\ 
	&\leq
	C_0 + \sum_{j=0}^m C_0 \tau \| \hat X_{h,*}^j \|_{W^{1, \infty}(\Gamma_{h,\rm f}^0)}
	+
	\sum_{j=0}^m C_0
	\tau\| I_hT_*^j\delta_\tau\hat e_h^j \|_{W^{1,\infty}(\Gamma_{h,\rm f}^0)} 
	\notag\\
	&\le
	C_0 + \sum_{j=0}^m C_0 \tau \| \hat X_{h,*}^j \|_{W^{1, \infty}(\Gamma_{h,\rm f}^0)}
	+ C_{\kappa_l} h^{1/2} \sum_{j=0}^m \tau \notag\\
	&\quad+ 
	C_{\kappa_l} h^{-3/2} \sum_{j=0}^m \tau \| \nabla_{\hat\Gamma_{h,*}^j} \hat e_h^j \|_{L^2(\hat\Gamma_{h,*}^j)}
	+
	C_{\kappa_l} h^{-3} \sum_{j=0}^m \tau \| \nabla_{\hat\Gamma_{h,*}^j} \hat e_h^j \|_{L^2(\hat\Gamma_{h,*}^j)}^2 \notag\\
	&\le
	C_0 + \sum_{j=0}^m C_0 \tau \| \hat X_{h,*}^j \|_{W^{1, \infty}(\Gamma_{h,\rm f}^0)} 
	\notag\\
	& \quad+
	C_{\kappa_l}
	h^{1/2}
	+
	C_{\kappa_l} h^{-3/2} (\tau+h^2) 
	+
	C_{\kappa_l} h^{-3} (\tau+h^{2})^2 \quad\mbox{(using \eqref{eq:err_fin1})} \notag\\
	&\le 
	C_0 + \sum_{j=0}^m C_0 \tau \| \hat X_{h,*}^j \|_{W^{1, \infty}(\Gamma_{h,\rm f}^0)} 
	\quad\mbox{(by step-size condition $\tau=O(h^2)$)} .
\end{align}
Then, the discrete Gr\"onwall's inequality yields the $W^{1,\infty}$ shape-regularity boundedness
\begin{align}\label{eq:W1inf-shape-reg}
	\max_{0\le m\le l} \|  \hat X_{h,*}^{m+1} \|_{W^{1, \infty}(\Ghso)} 
	&\le 
	C_0 ,
\end{align}
for sufficiently small $h\leq h_{\kappa_l}$.

For the inverse $W^{1,\infty}$ bound, first observe that
\begin{align*}
	& \| (Y^{m + 1} - {\rm id}_\Gm)\circ a^m \circ \hat X_{h,*}^{m} \|_{W^{1,\infty}(\Ghsm)}
	\\
	&\le \| \nabla_{\Gm} [(Y^{m + 1} - {\rm id}_\Gm)]\circ a^m \circ \hat X_{h,*}^{m} 
	\,\nabla_{\Ghsm} (a^m \circ\hat X_{h,*}^{m}) \|_{L^{\infty}(\Ghsm)} 
	+  \| Y^{m + 1} - {\rm id}_\Gm \|_{L^{\infty}(\Gm)}  
	\\
	&\le C_0\tau\| {\rm id}_\Ghsm \|_{W^{1,\infty}(\Ghsm)} 
	+ C_0\tau 
	\le C_0\tau
	.
\end{align*}
Repeating the derivation of \eqref{eq:X-hat-diff-W1inf} and using norm equivalence and the bound \eqref{eq:W1inf-shape-reg} gives
\begin{align}\label{eq:X-hat-diff-W1inf1}
		&
		\|\hat X_{h,*}^{m + 1} -\hat X_{h,*}^{m}\|_{W^{1,\infty}(\GhsM)} 
		\leq
		C_0
		\|\hat X_{h,*}^{m + 1} -\hat X_{h,*}^{m}\|_{W^{1,\infty}(\Ghsm)} 
		\notag\\
		&\leq  
		C_0 \tau 
		+
		C_{\kappa_l}\tau\| I_h\Tsm\delta_\tau\ehm \|_{W^{1,\infty}(\Gamma_{h,\rm f}^0)} 
		,
\end{align}
for sufficiently small $h\leq h_{\kappa_l}$.
The chain rule and \eqref{eq:X-hat-diff-W1inf1} then give
\begin{align}
	&\| (\hat X_{h,*}^{m+1})^{-1} \|_{W^{1,\infty}(\hat\Gamma_{h,*}^{m+1})}
	=
	\| (\hat X_{h,*}^{m})^{-1} \circ
	({\rm id}_\GhsM - (\hat X_{h,*}^{m+1} - \hat X_{h,*}^{m})) \|_{W^{1,\infty}(\hat\Gamma_{h,*}^{m+1})}
	\notag\\
	&\leq
	\| (\hat X_{h,*}^{m})^{-1} \|_{W^{1,\infty}(\hat\Gamma_{h,*}^{m})} \|
	{\rm id}_\GhsM - (\hat X_{h,*}^{m+1} - \hat X_{h,*}^{m}) \|_{W^{1,\infty}(\hat\Gamma_{h,*}^{m+1})}
	\notag\\
	&\leq
	(1+\|
	\hat X_{h,*}^{m+1} - \hat X_{h,*}^{m} \|_{W^{1,\infty}(\hat\Gamma_{h,*}^{m+1})})
	\| (\hat X_{h,*}^{m})^{-1} \|_{W^{1,\infty}(\hat\Gamma_{h,*}^{m})} 
	\notag\\
	&\leq
	(1+C_0\tau)
	\| (\hat X_{h,*}^{m})^{-1} \|_{W^{1,\infty}(\hat\Gamma_{h,*}^{m})} 
	+
	C_{\kappa_l}\tau\| I_h\Tsm\delta_\tau\ehm \|_{W^{1,\infty}(\Gamma_{h,\rm f}^0)}  .
\end{align}
Arguing as in the derivation of \eqref{eq:X-hat-diff-triangle} and applying the discrete Gronwall inequality, we obtain the inverse $W^{1,\infty}$ bound
\begin{align}\label{eq:W1inf-inv-shape-reg}
	\max_{0\le m\le l} \|  (\hat X_{h,*}^{m+1})^{-1} \|_{W^{1, \infty}(\GhsM)} 
	&\le 
	C_0 ,
\end{align}
for sufficiently small $h\leq h_{\kappa_l}$.

Together, \eqref{eq:W1inf-shape-reg} and \eqref{eq:W1inf-inv-shape-reg} give
\begin{align}
	\kappa_{l+1}
	&\le 
	C_0 , \notag
\end{align}
and hence the mesh-size requirement can be improved to $h\leq h_{0}$, where $h_0$ is independent of $\tau$ and $l$.
For sufficiently small $h \leq h_{0}$, the induction hypothesis \eqref{eq:ind_hypo1} at time level $l+1$ is recovered.
The proof of Theorem \ref{thm:main} is now complete.
\hfill\endproof

\appendix
\section{Notation}
\label{section:notation}
The notation below is frequently used in this article.
\begin{longtable}{p{1.1cm}p{13cm}}
	$\Gamma^m$:
	& 
	The exact smooth curve at time level $t=t_m$.\\
	
	$\Gamma_h^m$:
	&
	The numerically computed curve at time level $t=t_m$.\\
	
	$\bfx^{m}$: 
	&
	The nodal vector $\bfx^m=(x_1^m,\dots,x_J^m)^\top$ consisting of the positions of nodes on $\Gamma_h^m$.\\
	
	$\hat\bfx_*^{m}$: 
	&
	The distance projection of $\bfx^{m}$ onto the exact curve $\Gamma^m$, i.e., $\hat\bfx_*^{m}=(\hat x_{1,*}^m,\dots,\hat x_{J,*}^m)^\top$ with $\hat x_{j,*}^m=a^m(x_j^m)$. \\
	
	$\bfx_*^{m+1}$: 
	&
	The new position of $\hat\bfx_*^{m}$ evolving under curve-shortening flow (without additional tangential motion) from $t_m$ to $t_{m+1}$. \\
	
	$\Ghsm$: 
	&
	The piecewise polynomial curve which interpolates $\Gamma^m$ at the nodes in $\hat\bfx_*^{m}$.\\
	
	$\Gamma_{h,*}^{m+1}$: 
	&
	The piecewise polynomial curve which interpolates $\Gamma^{m+1}$ at the nodes in $\bfx_*^{m+1}$.\\
	
	$X_{h}^{m}$: 
	&
	The finite element function with nodal vector $\bfx^m$. It coincides with the identity map, i.e., ${\rm id}(x)=x$, when it is considered as a function on $\Gamma_{h}^m$. \\
	
	$X_{h}^{m+1}$: 
	&
	The finite element function with nodal vector $\bfx^{m+1}$. 
	When it is considered as a function on $\Gamma_{h}^m$, it represents the local flow map from $\Gamma_{h}^m$ to $\Gamma_{h}^{m+1}$.\\
	
	$\hat X_{h,*}^{m}$: 
	&
	The finite element function with nodal vector $\hat\bfx_*^m$. It coincides with the identity map, i.e., ${\rm id}(x)=x$, when it is considered as a function on $\Ghsm$. It coincides with the discrete flow map from $\hat\Gamma_{h,*}^0$ to $\Ghsm$ when it is considered as a function on $\hat\Gamma_{h,*}^0$.\\
	
	$X_{h,*}^{m+1}$: 
	&
	The finite element function with nodal vector $\bfx_*^{m+1}$. 
	When it is considered as a function on $\Ghsm$, it represents the local flow map from $\Ghsm$ to $\Gamma_{h,*}^{m+1}$.\\
	
	$X^{m+1}$: 
	&
	The global flow map from $\Gamma^0$ to $\Gamma^{m+1}$ under curve-shortening flow. \\
	
	$Y^{m+1}$: 
	&
	The local flow map from $\Gamma^m$ to $\Gamma^{m+1}$ under curve-shortening flow. \\
	
	$\hat e_{h}^m$: 
	&
	The finite element error function with nodal vector $\hat\bfe^m=\bfx^m-\hat\bfx_*^m$.\\ 
	
	$e_{h}^{m+1}$: 
	&
	The auxiliary error function with nodal vector $\bfe^{m+1}=\bfx^{m+1}-\bfx_*^{m+1}$.\\ 
	
	$n^m$: 
	&
	The unit normal vector on $\Gamma^m$. \\
	
	$n^m_*$: 
	&
	The unit normal vector of $\Gamma^m$ inversely lifted to a neighborhood of $\Gamma^m$ (including $\Ghsm$), i.e., $n^m_*=n^m\circ a^m$. \\
	
	$\hat n_{h,*}^m$: 
	&
	The normal vector on $\Ghsm$. \\
	
	$\bar n_{h,*}^m$: 
	&
	The averaged normal vector on $\Ghsm$, which is not necessarily unit. \\
	
	$n_h^m$: 
	&
	The normal vector on $\Gamma_{h}^m$. \\
	
	$\bar n_h^m$: 
	&
	The averaged normal vector on $\Gamma_{h}^m$, which is not necessarily unit. \\
	
	$\hat \mu_{h,*}^m$: 
	&
	The co-normal vector (unit tangent vector) on $\Ghsm$. \\
	
	$\mu_{h}^m$: 
	&
	The co-normal vector (unit tangent vector) on $\Ghm$. \\
	
	$\Nsm$: 
	&
	The normal projection operator $\Nsm=n^m_* (n^m_*)^\top$ on $\Ghsm$. \\
	
	$N^m$: 
	&
	The normal projection operator $N^m=n^m (n^m)^\top$ on $\Gamma^m$. 
	Thus $N^m$ is the lift of $\Nsm$ onto $\Gamma^m$, and $N_*^m$ is the extension of $N^m$ to a neighborhood of $\Gm$. \\
	
	$\Nhsm$: 
	&
	The normal projection operator $\Nhsm=\hat n^m_{h,*} (\hat n^m_{h,*})^\top$ on $\Ghsm$. \\
	
	$\Nbhsm$: 
	&
	The averaged normal projection operator $\Nbhsm= \frac{\bar n^m_{h,*}}{|\bar n^m_{h,*}|} (\frac{\bar n^m_{h,*}}{|\bar n^m_{h,*}|})^\top$ on $\Ghsm$. \\
	
	$T_*^m$: 
	&
	The tangential projection operator $T_*^m=I - n^m_* (n^m_*)^\top$ on $\Ghsm$. \\
	
	$T^m$: 
	&
	The tangential projection operator $T^m=I - n^m (n^m)^\top$ on $\Gamma^m$. 
	Thus $T^m$ is the lift of $\Tsm$ onto $\Gamma^m$. \\
	
	$\Thsm$: 
	&
	The tangential projection operator $\Thsm=I - \nhsm (\nhsm)^\top$ on $\Ghsm$. \\
	
	$\Tbhsm$: 
	&
	The averaged tangential projection operator $\Tbhsm=I - \frac{\bar n^m_{h,*}}{|\bar n^m_{h,*}|} (\frac{\bar n^m_{h,*}}{|\bar n^m_{h,*}|})^\top$ on $\Ghsm$. \\
	
	$\mathcal{N}(\Gamma_h^m)$: 
	&
	The collection of nodes of $\Gamma_h^m$. 
%
\end{longtable}

\section{Surface calculus formulas}

Let $\Gamma\subset\mathbb R^2$ be a smooth curve, possibly with
boundary. For $u\in C^\infty(\Gamma)$, we denote by
$\ud_i u$, $i=1,2$, the $i$th Cartesian component of
$\nabla_\Gamma u$. The corresponding product rule, chain rule, integration-by-parts formula, commutator identities, and evolution formulas are collected below. We refer to \cite[Lemma~3.14]{BL2025} and the references therein for their proofs.
\begin{lemma}\label{lemma:ud}
	Let $\Gamma$ and $ \Gamma^\prime$ be two smooth curves that are possibly open, such as smooth pieces of some finite element curves, and let $f, h \in C^\infty(\Gamma)$ and $g\in C^\infty(\Gamma^\prime; \Gamma)$ be given functions. Then the following results hold. 
	\begin{itemize}
		\item[1.]  $\ud_i(fh) = \ud_i f h + f\ud_i h$ on $\Gamma$.
		\item[2.] $\ud_i(f\circ g) = (\ud_j f\circ g)\, \ud_i g_j$ on $\Gamma'$.
		\item[3.] $\int_{\Gamma}f \ud_i h = -\int_{\Gamma}\ud_i f h + \int_{\Gamma}f h H n_i + \int_{\partial\Gamma}f h \mu_i$ where $n, \mu$ are the normal and co-normal (tangential) directions, respectively, and $H:=\ud_i n_i$ (with the Einstein notation) is the mean curvature, i.e. the trace of the second fundamental form.
		\item[4.] $\ud_i \ud_j f = \ud_j \ud_i f + n_i H_{jl} \ud_l f -  n_j H_{il} \ud_l f$, where $H_{ij} := \ud_i n_j = \ud_j n_i$.
		\item[5.] If $\Gamma$ evolves under the velocity field $v$, and $G_T := \bigcup_{t\in [0, T]}\Gamma(t) \times \{t\}$, then 
		$$\md(\ud_i f) = \ud_i (\md f )- (\ud_i v_j - n_i n_l \ud_j v_l)\ud_j f \quad\forall\, f \in C^2(G_T) ,$$
		where $\md$ denotes the material derivative with respect to $v$.
		\item[6.] If $f, h \in C^2(G_T)$ then 
		$$\frac{\d}{\d t}\int_{\Gamma} f h = \int_{\Gamma} \md f h + \int_{\Gamma} f\md h + \int_{\Gamma} f h (\nabla_\Gamma\cdot v).$$
		The divergence is defined as $\nabla_\Gamma\cdot v := \ud_i v_i$, which coincides with the intrinsic divergence on the curve if $v$ is a tangential vector field on $\Gamma$. Since the Lagrange interpolation commutes with the material time derivative, it is straightforward to check in the local coordinates that an analogous result also holds for the mass lumping integral, i.e., 
		$$
		\frac{\d}{\d t}\int_{\Gamma_h}^h \tilde f \tilde h = \int_{\Gamma_h}^h \md \tilde f \tilde h + \int_{\Gamma_h}^h \tilde f\md \tilde h + \int_{\Gamma_h}^h \tilde f \tilde h (\nabla_{\Gamma_h}\cdot v_h) ,
		$$
		where $\Gamma_h$ is a finite element curve moving with polynomial velocity $v_h\in [S_h(\Gamma_h)]^2$ (mass lumping is well defined on $\Gamma_h$), and $\tilde f,\tilde h$ are continuous functions defined on $\bigcup_{t\in [0, T]}\Gamma_h(t)\times\{t\}$.  
		\item[7.] The evolution of the unit normal vector $n$ of the curve $\Gamma$ with respect to the velocity field $v$ satisfies the following relation: 
		$$
		\md n_i = -\ud_i v_j n_j .
		$$
	\end{itemize}
\end{lemma}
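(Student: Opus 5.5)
The plan is to prove all seven items by one device: extend functions off $\Gamma$ and write the tangential gradient as $\ud_i f=\partial_i\tilde f-n_in_j\partial_j\tilde f$, where $\tilde f$ is any smooth extension of $f$ to a neighborhood of $\Gamma$ and the result does not depend on the extension. Every item then reduces to ordinary calculus in $\mathbb R^2$ plus the algebra of the projector $P=I-nn^\top$. For smooth pieces of finite element curves, which may be open, everything is done elementwise, and the pieces are assumed smooth up to their endpoints.

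\emph{Items 1 and 2.} Item 1 follows at once from the Leibniz rule for $\partial_i$, applied to the product of extensions $\tilde f\tilde h$. For item 2, take an extension $\tilde f$ of $f$. Then $f\circ g=\tilde f\circ g$ on $\Gamma'$, and the chain rule for the ambient gradient gives $\ud_i(f\circ g)=(\partial_j\tilde f\circ g)\,\ud_i g_j$. Since $g$ maps $\Gamma'$ into $\Gamma$, each vector $(\ud_i g_j)_j$ is tangent to $\Gamma$. Hence only the tangential part of $\nabla\tilde f$ contributes, and $\partial_j\tilde f$ may be replaced by $\ud_j f$.

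\emph{Items 3 and 4.} For item 3, I would first prove the tangential divergence theorem
\[
\int_\Gamma \ud_i F_i=\int_\Gamma H F\cdot n+\int_{\partial\Gamma}F\cdot\mu .
\]
To do so, split $F=(F\cdot\tau)\tau+(F\cdot n)n$. The tangential part integrates to the boundary term by the one-dimensional fundamental theorem of calculus in arclength. The normal part gives $(F\cdot n)\ud_in_i=(F\cdot n)H$, because $\ud_i(F\cdot n)\,n_i=0$. Applying this with $F=fh\,e_i$ and using item 1 yields item 3. For item 4, expand $\ud_i\ud_jf$ with a normal extension of $n$ and the symmetry of the ambient Hessian. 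The non-symmetric remainders are exactly $n_iH_{jl}\ud_lf-n_jH_{il}\ud_lf$, where $H_{ij}=\ud_in_j$ is symmetric.

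\emph{Items 5--7.} Fix a parametrization $X(\cdot,t)$ with $\partial_tX=v$. Item 7 follows by differentiating $n\cdot\partial_sX=0$ and $|n|^2=1$ in time. This gives $\md n\perp n$ and $\md n\cdot\partial_sX=-n\cdot\partial_s v$, which is the stated formula $\md n_i=-\ud_iv_jn_j$. For item 5, write $\ud_if=\partial_i\tilde f-n_in_l\partial_l\tilde f$ along the trajectory and differentiate. The derivative of the ambient gradient contributes $-\partial_i v_j\partial_j\tilde f$ projected tangentially, and the derivative of $nn^\top$ is computed with item 7. Collecting terms gives $\ud_i\md f-(\ud_iv_j-n_in_l\ud_jv_l)\ud_jf$. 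I expect this bookkeeping to be the main obstacle: the extension-dependent terms must be shown to cancel, and I would handle this by choosing $\tilde f$ constant along normals at each time. For item 6, the change of variables $\int_\Gamma fh=\int_{\Gamma^0}(fh)\circ X\,J$ with $\partial_tJ=(\nabla_\Gamma\cdot v)J$ gives the Reynolds formula. For the mass-lumped version, use $\int^h_{\Gamma_h}=\sum_K\frac{|K|}{2}\sum_{p}$. The nodal values differentiate to material derivatives, and $\frac{\d}{\d t}|K|=\int_K\nabla_{\Gamma_h}\cdot v_h$, where $\nabla_{\Gamma_h}\cdot v_h$ is constant on $K$ because $v_h$ is linear on a straight segment. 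This reproduces the lumped product term $\int^h_{\Gamma_h}\tilde f\tilde h(\nabla_{\Gamma_h}\cdot v_h)$.
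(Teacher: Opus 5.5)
Your proposal is correct and follows the standard route. The paper gives no proof of its own: it defers to \cite[Lemma~3.14]{BL2025} and the references there, where these identities come from the same extension/projector calculus with $\ud_i=\partial_i-n_in_j\partial_j$, and its one-line justification of the mass-lumped formula ("interpolation commutes with the material derivative, checked in local coordinates") is your elementwise computation with nodal material derivatives and $\frac{\d}{\d t}|K|=\int_K\nabla_{\Gamma_h}\cdot v_h$. The one point to tighten is item 7: differentiate $n\cdot\partial_\theta X=0$ with a time-independent parameter $\theta$, or note that the commutator of $\partial_t$ with the time-dependent arclength derivative is tangential and therefore annihilated by $n$.
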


\section{Super-approximation estimates}
\label{sec:super}

This section collects the super-approximation estimates associated with
mass lumping and the projected-error framework; see
\cite{BL2024,BL2025,BGV2027}. Throughout this section,
\[
m\in\big\{0,\ldots,\lfloor T/\tau\rfloor\big\},
\qquad
p,q,r\in[1,\infty].
\]
\begin{lemma}\label{lemma:super_conv}
	The following estimates hold for any piecewise smooth function $f$ and finite element functions $\phi_h,v_h, w_h\in S_h(\Ghsm)$: 
	\begin{align*}
		\| (1 - I_h)(f \phi_h) \|_{L^p(\Ghsm)} &\lesssim \| f \|_{W_h^{2,\infty}(\Ghsm)} h \| \phi_h \|_{L^{p}(\Ghsm)} , \notag\\
		\| \nabla_\Ghsm (1 - I_h)(f \phi_h) \|_{L^p(\Ghsm)} &\lesssim \| f \|_{W_h^{2,\infty}(\Ghsm)} h \| \phi_h \|_{W^{1,p}(\Ghsm)} ,\\
		\| (1 - I_h)(v_h w_h) \|_{L^p(\Ghsm)} &\lesssim  h^2 \| v_h \|_{W^{1,q}(\Ghsm)} \| w_h \|_{W^{1,r}(\Ghsm)} ,\\
		\| \nabla_{\Ghsm}(1 - I_h)(v_h w_h) \|_{L^p(\Ghsm)} &\lesssim  h \| v_h \|_{W^{1,q}(\Ghsm)} \| w_h \|_{W^{1,r}(\Ghsm)} ,
	\end{align*}
	for any $1/p=1/q+1/r$.
\end{lemma}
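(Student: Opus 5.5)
The plan is to work elementwise on $\Ghsm$ and exploit two facts: $I_h$ is exact on affine functions of the reference coordinate, and on each element $K=F_K(K_{\rm f}^0)$ the parametrization $F_K$ is affine, with $\kappa_l$ controlling $|\nabla F_K|$ and $|\nabla F_K^{-1}|$. Every estimate therefore reduces, after pullback to $K_{\rm f}^0$, to a one-dimensional Bramble--Hilbert (Taylor) argument on an interval of length $\simeq h$. We then return to $K$ and sum over elements; this sums $p$-th powers for $p<\infty$ and takes a maximum for $p=\infty$. All constants depend only on $\kappa_l$.

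For the first two estimates we apply Lemma~\ref{lemma:Ih} on each $K$ to $g:=f\phi_h$. On a single element $\phi_h$ is linear, so $\nabla^2\phi_h=0$ there. The second derivative is then
\[
\nabla^2(f\phi_h)=(\nabla^2 f)\phi_h+2\nabla f\,\nabla\phi_h .
\]
This gives
\[
\|(1-I_h)(f\phi_h)\|_{L^p(K)}\lesssim h^2\|f\|_{W^{2,\infty}(K)}\bigl(\|\phi_h\|_{L^p(K)}+\|\nabla\phi_h\|_{L^p(K)}\bigr).
\]
The local inverse inequality $\|\nabla\phi_h\|_{L^p(K)}\lesssim h^{-1}\|\phi_h\|_{L^p(K)}$ on a single linear element turns this into $h\|f\|_{W^{2,\infty}}\|\phi_h\|_{L^p(K)}$, which is the first estimate. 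For the gradient estimate we use the $h^{1}$ part of Lemma~\ref{lemma:Ih}:
\[
\|\nabla(1-I_h)(f\phi_h)\|_{L^p(K)}\lesssim h\|f\|_{W^{2,\infty}}\bigl(\|\phi_h\|_{L^p(K)}+\|\nabla\phi_h\|_{L^p(K)}\bigr),
\]
and here no inverse inequality is needed.

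For the product estimates, $v_hw_h$ is quadratic on each element, and its second derivative is $2\nabla v_h\nabla w_h$, which is constant on $K$. Lemma~\ref{lemma:Ih} then gives
\[
\|(1-I_h)(v_hw_h)\|_{L^p(K)}\lesssim h^2\|\nabla v_h\,\nabla w_h\|_{L^p(K)} .
\]
Hölder's inequality with $1/p=1/q+1/r$ bounds the right-hand side by $h^2\|\nabla v_h\|_{L^q(K)}\|\nabla w_h\|_{L^r(K)}$. The gradient version follows in the same way, with $h$ in place of $h^2$. Summing over $K$, using the discrete Hölder inequality $\sum_K a_Kb_K\le(\sum a_K^{q/p}\cdots)$ in its standard form, gives the global bounds; these are even slightly stronger than claimed, since only seminorms appear.

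The one point needing care is the pullback between $K$ and $K_{\rm f}^0$ in the presence of the parametric map. Because $k=1$, $F_K$ is affine, so the chain rule introduces no second-derivative terms of $F_K$, and the constants are controlled purely by $\kappa_l$ through \eqref{PP}. Lemma~\ref{lemma:Ih} already encodes this, so there is no genuine obstacle. The main step to execute carefully is the use of $\nabla^2\phi_h|_K=0$ together with the local inverse estimate in the first inequality, because that is what trades one power of $h$ for the absence of derivatives on $\phi_h$.
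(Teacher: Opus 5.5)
Your proposal is correct. The paper gives no proof of this lemma and only cites \cite{BL2024,BL2025,BGV2027}; the super-approximation bounds there rest on the same elementwise ingredients you use, namely Bramble--Hilbert on each affine element, the Leibniz rule with $\nabla^2\phi_h|_K=0$ (respectively $\nabla^2(v_hw_h)|_K=2\nabla v_h\nabla w_h$), and a local inverse estimate for the first bound.

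One cosmetic fix is needed in the summation step for the product estimates. Drop the garbled discrete H\"older inequality. Instead, sum the local bounds $h^2\|\nabla v_h\,\nabla w_h\|_{L^p(K)}$ in $\ell^p$ to obtain $h^2\|\nabla v_h\,\nabla w_h\|_{L^p(\Ghsm)}$, and then apply H\"older's inequality once on all of $\Ghsm$ with $1/p=1/q+1/r$.
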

As a direct consequence of Lemma~\ref{lemma:super_conv}, we obtain the following estimate.
\begin{lemma}\label{lemma:T<=N2}
	The following estimates hold:
	\begin{align} 
		\| \Tsm \ehm \|_{L^p(\Ghsm)}
		&\lesssim
		h\| \ehm \|_{L^p(\Ghsm)}  , \notag\\
		\| \Tsm \ehm \|_{W^{1,p}(\Ghsm)}
		&\lesssim
		h\| \ehm \|_{W^{1,p}(\Ghsm)} 
		\notag .
	\end{align}
\end{lemma}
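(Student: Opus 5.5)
The plan is to exploit the nodal orthogonality of $\ehm$. By construction, $\hat e_h^m(p)=x^m_j-a^m(x^m_j)$ is parallel to $n^m_*(p)$ at every node $p\in\mathcal N(\Ghsm)$, because the distance retraction moves along the normal. Hence $\Tsm\ehm$ vanishes at all nodes, i.e.
\[
I_h(\Tsm\ehm)=0 \qquad\text{on }\Ghsm .
\]
It follows that
\[
\Tsm\ehm=(1-I_h)(\Tsm\ehm),
\]
so the quantity to be estimated is a pure interpolation error of the product of the smooth matrix field $f=\Tsm=I-n^m_*(n^m_*)^\top$ with the finite element function $\phi_h=\ehm$ (applied componentwise).

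Both estimates then reduce to the first two inequalities of Lemma~\ref{lemma:super_conv}. With $f=\Tsm$ one has $\|\Tsm\|_{W_h^{2,\infty}(\Ghsm)}\lesssim 1$. This holds because $n^m_*=n^m\circ a^m$ is smooth in the tubular neighbourhood. Its elementwise second derivatives along $\Ghsm$ are controlled by the chain rule together with the boundedness of $\hat X_{h,*}^m$ in $W^{1,\infty}$ and its inverse (the constant $\kappa_l$). For linear elements the parametrization is affine on each element, so no second derivatives of the parametrization enter. Lemma~\ref{lemma:super_conv} then gives
\[
\|\Tsm\ehm\|_{L^p(\Ghsm)}
=\|(1-I_h)(\Tsm\ehm)\|_{L^p(\Ghsm)}
\lesssim h\|\ehm\|_{L^p(\Ghsm)} ,
\]
and similarly
\[
\|\nabla_\Ghsm\Tsm\ehm\|_{L^p(\Ghsm)}\lesssim h\|\ehm\|_{W^{1,p}(\Ghsm)} .
\]
Adding the $L^p$ bound yields the $W^{1,p}$ estimate. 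Here $W^{1,p}$ is understood elementwise (broken), since $\Tsm\ehm$ is continuous but only piecewise smooth.

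Should one wish to verify Lemma~\ref{lemma:super_conv} directly in this instance, work on a single element $K$ with the product rule. Since $\phi_h$ is linear on $K$, $\nabla^2_K(f\phi_h)=\nabla^2 f\,\phi_h+2\nabla f\,\nabla\phi_h$. Lemma~\ref{lemma:Ih} gives $h^2\|\nabla^2(f\phi_h)\|_{L^p(K)}$, and the term $h^2\|\nabla\phi_h\|_{L^p(K)}$ is reduced to $h\|\phi_h\|_{L^p(K)}$ by the inverse inequality, which is available thanks to quasi-uniformity and shape regularity. For the gradient estimate there is no inverse step, which is why the full $W^{1,p}$ norm of $\ehm$ appears there.

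The only genuinely delicate point is the first step: making precise that $\ehm(p)\parallel n^m_*(p)$ exactly at the nodes, using $x_j^m-a^m(x_j^m)=d^m(x_j^m)\,n^m(a^m(x_j^m))$ and $n^m_*(\hat x^m_{j,*})=n^m(\hat x^m_{j,*})$. Beyond that, one must only track that all constants depend solely on $\kappa_l$ and the exact curve.
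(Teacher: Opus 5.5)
Your proposal is correct and follows the paper's route. Nodal orthogonality of $\ehm$ to $n_*^m$ gives $\Tsm\ehm=(1-I_h)(\Tsm\ehm)$, and the first two estimates of Lemma~\ref{lemma:super_conv} with $f=\Tsm$ then yield both bounds; this is exactly what the paper means by calling the lemma a direct consequence of Lemma~\ref{lemma:super_conv}.
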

A nonlinear counterpart of Lemma~\ref{lemma:super_conv} follows from the chain and product rules in Lemma~\ref{lemma:ud}.
\begin{lemma}\label{lemma:super_conv-nonlinear}
	Given a function $f\in W^{3,\infty}(D)$, defined on some open bulk region $D\subset \R^2$, and any vector-valued finite element functions $\phi_h, \psi_h \in [S_h(\Ghso)]^2$, whose ranges are contained in $D$, we have
	\begin{align*}
		\| (1 - I_h)(f\circ\phi_h - f\circ\psi_h) \|_{L^p(\Ghso)} &\leq C h \| \phi_h - \psi_h \|_{L^{p}(\Ghso)} , 
		\notag\\
		\| \nabla_\Ghso (1 - I_h)(f\circ\phi_h - f\circ\psi_h) \|_{L^p(\Ghso)} &\leq C h \| \phi_h - \psi_h \|_{W^{1,p}(\Ghso)}  . 
	\end{align*}
	Consequently, from the triangle inequality and Lipschitz continuity, it holds that
	\begin{align*}
		\| I_h(f\circ\phi_h - f\circ\psi_h) \|_{L^p(\Ghso)} &\leq C \| \phi_h - \psi_h \|_{L^{p}(\Ghso)} , 
		\notag\\
		\| I_h (f\circ\phi_h - f\circ\psi_h) \|_{W^{1,p}(\Ghso)} &\leq C \| \phi_h - \psi_h \|_{W^{1,p}(\Ghso)}  .
	\end{align*}
	The constants $C$ above depend on $\| f \|_{W^{3,\infty}(D)}$, $\| \phi_h \|_{W^{1,\infty}(\Ghso)}$ and $\| \psi_h \|_{W^{1,\infty}(\Ghso)}$.
\end{lemma}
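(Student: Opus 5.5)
The statement is Lemma~\ref{lemma:super_conv-nonlinear}. I would prove it elementwise on the flat reference curve $\Ghso$, where each element $K$ is a straight segment of length $\simeq h$ and $\phi_h,\psi_h$ are affine on $K$. Write $w:=\phi_h-\psi_h$ and use the integral form of the difference,
\[
f\circ\phi_h-f\circ\psi_h=\int_0^1 \nabla f\big(\psi_h+s w\big)\,\d s\; w =: G\,w ,
\]
where $G$ is a matrix-valued function on $K$. On $K$ the map $\psi_h+sw$ is affine in the arclength parameter. By the chain rule, $G$ is smooth on each element with
\[
\|G\|_{W^{2,\infty}(K)}\le C\big(\|f\|_{W^{3,\infty}(D)},\|\phi_h\|_{W^{1,\infty}},\|\psi_h\|_{W^{1,\infty}}\big).
\]
Indeed, second derivatives of $G$ involve only $\nabla^3 f$ times products of first derivatives of the affine maps, since their second derivatives vanish. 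This is exactly why $W^{3,\infty}$ is needed for $f$.

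The problem now has the form $(1-I_h)(G\,w)$ with $G\in W_h^{2,\infty}$ and $w$ a finite element function, which is the linear super-approximation of Lemma~\ref{lemma:super_conv}. I would repeat its proof on each $K$ to make the constant dependence explicit. Lemma~\ref{lemma:Ih} gives
\[
\|(1-I_h)(Gw)\|_{L^p(K)}\lesssim h^2\|\partial_s^2(Gw)\|_{L^p(K)} .
\]
Since $\partial_s^2 w=0$ on $K$,
\[
\partial_s^2(Gw)=(\partial_s^2G)\,w+2(\partial_sG)\,\partial_s w .
\]
This yields the element bound
\[
h^2\big(\|w\|_{L^p(K)}+\|\partial_s w\|_{L^p(K)}\big).
\]
An inverse inequality on the affine piece, $\|\partial_s w\|_{L^p(K)}\lesssim h^{-1}\|w\|_{L^p(K)}$, turns this into $h\|w\|_{L^p(K)}$, which is the first estimate. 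For the gradient estimate, Lemma~\ref{lemma:Ih} gives the bound $h\|\partial_s^2(Gw)\|_{L^p(K)}\lesssim h\|w\|_{W^{1,p}(K)}$ directly, with no inverse inequality needed. For $p<\infty$ I would sum the $p$-th powers over elements; for $p=\infty$ I would take the maximum.

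The consequences follow from the triangle inequality
\[
I_h(\cdot)=(\cdot)-(1-I_h)(\cdot),
\]
together with two elementary bounds:
\[
\|f\circ\phi_h-f\circ\psi_h\|_{L^p}\le\|\nabla f\|_\infty\|w\|_{L^p},
\qquad
\|\nabla(Gw)\|_{L^p}\le C\|w\|_{W^{1,p}}.
\]
The second uses the product rule and the bound $\|G\|_{W^{1,\infty}}\le C$.

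The only delicate point is that the inverse inequality must be applied only to the affine factor $w$ and never to $G$. It also requires local quasi-uniformity of element lengths on $\Ghso$, which \eqref{P1} provides. The gradient bound must not lose a power of $h$, which is ensured by the identity $\partial_s^2 w=0$ on each element.
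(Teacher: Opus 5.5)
Your argument is correct and is essentially the paper's route: the paper only remarks that the lemma follows from the chain and product rules applied to the linear super-approximation of Lemma~\ref{lemma:super_conv}. Your factorization $f\circ\phi_h-f\circ\psi_h=Gw$ with $\|G\|_{W^{2,\infty}_h}\le C$, the identity $\partial_s^2 w=0$, and the elementwise inverse inequality make exactly that remark explicit. One hypothesis is used silently, by you and equally by the paper when it invokes Lipschitz continuity: the segments $[\psi_h(x),\phi_h(x)]$ must lie in $D$, e.g.\ $D$ convex, or $\phi_h,\psi_h$ uniformly close as in every application in the paper; for nonconvex $D$ (e.g.\ a slit annulus) the stated estimates can fail without it.
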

The approximation properties of the Gauss--Lobatto quadrature rule
give the following estimate.
\begin{lemma}\label{lemma:super_conv2}
	Let $f$ be a function which is smooth on every element $K$ of $\Ghsm$, and assume that the pull-back function $f\circ F_K $ vanishes at all the Gauss--Lobatto points of the flat segment $K_{\rm f}^0$ for every element $K$ of $\Ghsm$. Then the following estimate holds: 
	\begin{align}
		\Big|\int_\Ghsm f \d\xi \Big| 
		\lesssim h^{2} \| f \|_{W^{2,1}_h(\Ghsm)} , \notag
	\end{align}
	where $\|\cdot\|_{W^{2,1}_h(\Ghsm)}$ denotes the piecewise $W^{2,1}$ norm. 
\end{lemma}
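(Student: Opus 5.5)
The estimate is elementwise, and the plan is to prove it by a pullback to the reference interval combined with a Peano-kernel (Bramble--Hilbert) argument. Fix an element $K=F_K(K_{\rm f}^0)$ of $\Ghsm$ and write
\[
\int_K f\,\d\xi=\int_{K_{\rm f}^0}(f\circ F_K)\,|\nabla_{K_{\rm f}^0}F_K|\,\d\hat x .
\]
Since $k=1$, $F_K$ is affine and the Jacobian $J_K:=|\nabla_{K_{\rm f}^0}F_K|$ is a constant. By the shape-regularity bound \eqref{PP}, $J_K$ is bounded above and below by constants depending only on $\kappa_l$. Hence it suffices to prove, for a smooth function $g$ on a segment $S$ of length $|S|\simeq h$ that vanishes at the Gauss--Lobatto points of $S$ (for degree one these are the two endpoints), the bound
\[
\Big|\int_S g\Big|\lesssim |S|^2\,\|g''\|_{L^1(S)} .
\]

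For this one-dimensional bound I would parametrize $S=[0,L]$ with $L=|S|$. Because $g(0)=g(L)=0$, $g$ coincides with its own linear interpolation error. Using the Green's function of $-\partial_{xx}$ with homogeneous Dirichlet conditions, we can write
\[
g(x)=-\int_0^L G(x,y)\,g''(y)\,\d y,\qquad G(x,y)=\frac{\min(x,y)\,(L-\max(x,y))}{L}\ge 0 .
\]
Integrating in $x$ gives
\[
\int_S g=-\int_0^L\frac{y(L-y)}{2}\,g''(y)\,\d y,
\]
and therefore $|\int_S g|\le \frac{L^2}{8}\|g''\|_{L^1(S)}$. Equivalently, this is the trapezoid-rule error formula applied to a function whose trapezoid sum is zero.

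Next I transfer back to $K$. Since $F_K$ is affine with derivative of size $J_K\simeq 1$, the second derivative of $g=f\circ F_K$ with respect to the reference arclength equals $J_K^2$ times the tangential second derivative of $f$ on $K$. The corresponding $L^1$ norms differ only by powers of $J_K$, so
\[
\Big|\int_K f\Big|\le C_{\kappa_l}h^2\|f\|_{W^{2,1}(K)} .
\]
Summing over all elements $K\subset\Ghsm$ gives the lemma with the piecewise norm $\|\cdot\|_{W^{2,1}_h(\Ghsm)}$.

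The main obstacle is bookkeeping rather than analysis: the hypothesis is stated for the pullback on $K_{\rm f}^0$, so I need to check that the Gauss--Lobatto points for linear elements are exactly the nodes. I also need to check that the constants coming from $J_K$, including the mismatch between $|K_{\rm f}^0|$ and $|K|$, are absorbed into $C_{\kappa_l}$, so that the implied constant in $\lesssim$ is consistent with the convention used elsewhere in the paper.
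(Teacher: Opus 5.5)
Your proposal is correct and is essentially the paper's argument. The paper gives no details beyond invoking the standard error bound for the two-point Gauss--Lobatto (trapezoidal) rule, which is exact on linears and vanishes on $f$ by hypothesis; your Green's-function/Peano-kernel identity $\int_0^L g=-\int_0^L \tfrac{y(L-y)}{2}\,g''(y)\,\mathrm{d}y$ on each affine element is an explicit proof of exactly that bound, and your handling of the constant Jacobian $J_K$ and of $|K|\lesssim h$ through $\kappa_l$ matches the paper's convention for $\lesssim$.
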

The following estimate is a direct consequence of
Lemma~\ref{lemma:super_conv2}.
\begin{lemma}\label{Lemma-GLW}
	For a smooth function $f$ on $\Gm$, the following estimate holds: 
	\begin{align*}
		\Big| \int_{\Gm} \nabla_{\Gm} (f - (I_h f^{-\ell})^\ell) \cdot  \nabla_{\Gm}  \phi_h^\ell \Big|
		\lesssim 
		h^{2} \|f\|_{H^{2}(\Gm)} \|\phi_h\|_{H^1(\Ghsm)} 
		\quad\forall\,\phi_h\in S_h(\Ghsm) . 
	\end{align*}
\end{lemma}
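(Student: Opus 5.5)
The statement is Lemma~\ref{Lemma-GLW}, a super-approximation bound for the interpolation error of a smooth $f$ tested against lifted finite element gradients. The plan is to move the integral to $\Ghsm$, integrate by parts elementwise, and reduce everything to Lemma~\ref{lemma:super_conv2}.

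\textbf{Step 1: transfer to the discrete curve.} I would first write
\[
\int_{\Gm}\nabla_{\Gm}\bigl(f-(I_hf^{-\ell})^\ell\bigr)\cdot\nabla_{\Gm}\phi_h^\ell
=\int_{\Ghsm}\nabla_{\Ghsm}(1-I_h)f^{-\ell}\cdot\nabla_{\Ghsm}\phi_h+R,
\]
where $R$ is the geometric perturbation remainder. By the second estimate of Lemma~\ref{lemma:geo-pert} with $p=q=2$, together with the $H^1$-stability of $(1-I_h)$ (Lemma~\ref{lemma:Ih}), we get $|R|\lesssim h^2\|f\|_{H^2(\Gm)}\|\phi_h\|_{H^1(\Ghsm)}$. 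If this perturbation bound carries an unwanted $W^{1,\infty}$ norm on $f$, I would use the $L^2$ version, since Lemma~\ref{lemma:geo-pert} is stated for general Hölder pairs.

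\textbf{Step 2: elementwise integration by parts.} Each $K\subset\Ghsm$ is a flat segment and $\phi_h|_K$ is linear, so $\nabla_{\Ghsm}\phi_h$ is constant on $K$. Let $w:=(1-I_h)f^{-\ell}$. Then
\[
\int_K\nabla_K w\cdot\nabla_K\phi_h=\bigl[w\,\partial_s\phi_h\bigr]_{\partial K}=0,
\]
because $w$ vanishes at both endpoints of $K$, which are nodes lying on $\Gm$. Equivalently, on a segment $\int_K\partial_s w\,ds$ is just the difference of endpoint values. This is the same mechanism that gives $\mathscr D_{33}^m=0$ in the consistency analysis. Hence the discrete integral vanishes exactly, and the whole quantity reduces to $R$.

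\textbf{Expected obstacle and fallback.} The only delicate point is the norm bookkeeping in Step 1. A crude estimate would give only $h^2\|\nabla_{\Ghsm}w\|_{L^2}\|\nabla_{\Ghsm}\phi_h\|_{L^2}$, and since $\|\nabla_{\Ghsm}w\|_{L^2}\lesssim h\|f\|_{H^2(\Gm)}$, this is already $O(h^3)$, better than required. If the perturbation lemma instead has to be applied to the full lifted function, I would split it as
\[
\int_{\Gm}\nabla_{\Gm}f\cdot\nabla_{\Gm}\phi_h^\ell-\int_{\Ghsm}\nabla_{\Ghsm}f^{-\ell}\cdot\nabla_{\Ghsm}\phi_h.
\]
This difference is bounded by $h^2\|f\|_{H^1}\|\phi_h\|_{H^1}$ using Lemma~\ref{lemma:geo-pert}, together with the $H^2\hookrightarrow W^{1,\infty}$ embedding in one dimension on $\Gm$. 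The remaining lifted interpolant term is then handled by Step 2. Either way, the bound $h^2\|f\|_{H^2(\Gm)}\|\phi_h\|_{H^1(\Ghsm)}$ follows.
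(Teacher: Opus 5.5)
Your proof is correct, but it takes a different route from the one the paper indicates.

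\textbf{The paper's route.} The paper presents Lemma~\ref{Lemma-GLW} as a direct consequence of the Gauss--Lobatto quadrature estimate, Lemma~\ref{lemma:super_conv2}. This is the general-degree superconvergence mechanism: the interpolation error $(1-I_h)f^{-\ell}$ vanishes at the Gauss--Lobatto points, and the quadrature-error bound supplies the factor $h^2$.

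\textbf{Your route.} You first transfer the integral to $\Ghsm$ with the second estimate of Lemma~\ref{lemma:geo-pert}, applied to the non-finite-element function $w=(1-I_h)f^{-\ell}$. This is legitimate: that estimate comes from a pointwise $O(h^2)$ bound on the geometric perturbation matrix, and it is not restricted to $S_h$. You then use that on each flat segment $K$ the gradient $\nabla_{\Ghsm}\phi_h$ is constant. Hence $\int_K\nabla_K w\cdot\nabla_K\phi_h=\partial_s\phi_h\,[w]_{\partial K}=0$ exactly. This is the same cancellation that gives $\mathscr D_{33}^m=0$ and $\mathscr L_{23}^m=0$ in the paper.

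\textbf{What each buys.} Your argument is more elementary and uses only tools already present in the paper. It also yields the sharper bound $h^3\|f\|_{H^2(\Gm)}\|\phi_h\|_{H^1(\Ghsm)}$, because $\|\nabla_{\Ghsm}w\|_{L^2(\Ghsm)}\lesssim h\|f^{-\ell}\|_{H^2_h(\Ghsm)}\lesssim h\|f\|_{H^2(\Gm)}$. The price is that the exact cancellation is specific to $k=1$. For higher degree, $\nabla\phi_h$ is no longer elementwise constant, and a Gauss--Lobatto-type argument is needed anyway. The paper's phrasing through Lemma~\ref{lemma:super_conv2} keeps the proof uniform in the polynomial degree, consistent with its claim of not being tailored to linear elements.

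\textbf{Small bookkeeping points.} The bound on $\nabla_{\Ghsm}w$ uses the approximation estimate of Lemma~\ref{lemma:Ih} together with the chain rule for $f\circ a^m$ with bounded derivatives of $a^m$. It is not an $H^1$-stability property. Also, the $H^2\hookrightarrow W^{1,\infty}$ embedding in your fallback is unnecessary, since Lemma~\ref{lemma:geo-pert} with $p=q=2$ involves only $H^1$ norms.
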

Lemma~\ref{lemma:T<=N2} also yields an important cancellation estimate for the tangential stiffness bilinear form.
\begin{lemma}\label{lemma:AT-sup}
	We have
	\begin{align}
		| \mathscr A_{h, *}^T(\hat{e}_h^m,\phi_h) | 
		&\lesssim \min\Big\{\| \ehm \|_{L^2({\Ghsm})}  \| \phi_h \|_{H^1({\Ghsm})} , \| \ehm \|_{H^1({\Ghsm})}  \| \phi_h \|_{L^2({\Ghsm})}\Big\}
		. \notag
	\end{align}
\end{lemma}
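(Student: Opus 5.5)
The plan is to write the tangential stiffness form explicitly and move all derivatives off the piece of $\ehm$ that is not small. On a curve, ${\rm tr}\big((\nabla_\Sigma u)(I-nn^\top)(\nabla_\Sigma v)^\top\big)$ with $\nabla_\Sigma u = (\partial_s u)\otimes \mu$ reduces to $\sum_{i}\big((\partial_s u_i)\mu\big)\cdot\big((I-nn^\top)\mu\big)\,\partial_s v_i$. Hence, with $\hat n_{h,*}^m$ the normal of $\Ghsm$ and $\hat\mu_{h,*}^m$ its unit tangent, $\mathscr A_{h,*}^T(\ehm,\phi_h)$ is essentially $\int_{\Ghsm}\partial_s\ehm\cdot\partial_s\phi_h$ after the tangential projection is taken into account. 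The first step is to fix this convention as in \cite{BL2024} and write $\mathscr A^T_{h,*}(u,v)=\int_{\Ghsm}(\nabla_{\Ghsm}u)\hat T_{h,*}^m:(\nabla_{\Ghsm}v)\hat T_{h,*}^m$. The orthogonality that produces the gain is that $\ehm$ is nodally normal to $\Gm$, and $(\nabla\ehm)$ tested against tangential directions only sees $\Tsm\ehm$ to leading order.

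Next, split $\ehm = I_h(\Nsm\ehm)$ (nodally exact, since $\Tsm\ehm=0$ at the nodes up to the quadratic remainder $r_h^m$ in \eqref{eq:geo_rel_1}–\eqref{eq:geo_rel_2}) and use the product rule $\nabla(\Nsm\ehm)=(\nabla\Nsm)\ehm+\Nsm\nabla\ehm$. The term involving $(\nabla \Nsm)\ehm$ has one fewer derivative on $\ehm$ and is bounded by $\|\ehm\|_{L^2}\|\phi_h\|_{H^1}$, or by $\|\ehm\|_{H^1}\|\phi_h\|_{L^2}$ after integrating by parts elementwise. The interpolation defect $(1-I_h)(\Nsm\ehm)$ is controlled by Lemma~\ref{lemma:super_conv} and Lemma~\ref{lemma:T<=N2}, giving $h\|\ehm\|_{W^{1,p}}$. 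An inverse estimate converts this to $\|\ehm\|_{L^2}$ in the first alternative, and it is directly acceptable in the second. The remaining principal term is $\int (\Nsm\nabla\ehm)\hat T_{h,*}^m:(\nabla\phi_h)\hat T_{h,*}^m$. Here I would integrate by parts on each element, moving $\partial_s$ from $\ehm$ onto $\Nsm$ and $\phi_h$. Since $\phi_h$ and $\ehm$ are linear on each element, $\partial_s^2\phi_h=0$ elementwise, so interior terms only hit the smooth factors $\Nsm$ and $\hat\mu_{h,*}^m$, which are $O(1)$. This yields $\|\ehm\|_{L^2}\|\phi_h\|_{H^1}$.

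The main obstacle is the nodal jump terms produced by the elementwise integration by parts. These take the form $\sum_p \ehm(p)\cdot\big[\Nsm\,\partial_s\phi_h\,(\hat\mu\cdot T\hat\mu)\big]_{p-}^{p+}$, and $\partial_s\phi_h$ jumps by $O(|\nabla\phi_h|)$. Naively this gives $h^{-1}\|\ehm\|_{L^2}\|\phi_h\|_{H^1}$, which is too large. I would first try to avoid this term through the tangential structure: the form pairs $\Nsm\nabla\ehm$ with tangential directions, so the factor that carries the jump is $\Nsm(p)\ehm(p)$ contracted against the tangential part of $\partial_s\phi_h\otimes\hat\mu$. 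Equivalently, I would rewrite the principal term as $\int \partial_s(\Nsm\ehm)\cdot \Tsm$-type quantity and exploit $\Tsm\Nsm=0$, so that only derivatives of $\Nsm$ (bounded) survive rather than jumps. If that rearrangement fails, the fallback is to use the symmetric form, moving all derivatives onto $\phi_h$ only in the second alternative of the minimum, and to control the jump by the $O(h)$ kink of $\hat n_{h,*}^m$ (from \eqref{normal-intpl}), which cancels the $h^{-1}$. The two alternatives in the minimum then follow by choosing which factor receives the derivative.
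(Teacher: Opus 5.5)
Your decomposition is essentially the paper's. Writing $\ehm=I_h(\Nsm\ehm)$ and applying the product rule amounts to $\Tsm\partial_s\ehm=\partial_s(\Tsm\ehm)-(\partial_s\Tsm)\ehm$. Here $\Tsm\ehm=-(1-I_h)(\Nsm\ehm)$ is superclose to zero (Lemma~\ref{lemma:T<=N2}), and $(\partial_s\Tsm)\ehm$ carries no derivative of $\ehm$. Note that $\Tsm\ehm$ vanishes \emph{exactly} at the nodes, because $a^m$ is the closest-point projection; $r_h^m$ compares $\ehm$ with $e_h^m$ and plays no role here.

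The genuine problem is your reading of $\mathscr A^T_{h,*}$. The projector in ${\rm tr}\big((\nabla u)(I-nn^\top)(\nabla v)^\top\big)$ acts on the vector values, so $\mathscr A^T_{h,*}(u,v)=\int_{\Ghsm}\partial_s u\cdot\Thsm\,\partial_s v$ pairs only the tangential components of $\partial_s u$ and $\partial_s v$. Your reduction with $\nabla_\Sigma u=(\partial_s u)\otimes\mu$ projects the differentiation direction instead. It gives $\mu\cdot(I-nn^\top)\mu=1$ and turns $\mathscr A^T_{h,*}$ into the full form $\int\partial_s u\cdot\partial_s v$. For that form the estimate fails for nodally normal functions: one oscillating like $\varepsilon(-1)^j n^m(x_j)$, tested against itself, gives $\sim\varepsilon^2h^{-2}$ on the left versus $\sim\varepsilon^2h^{-1}$ on the right. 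This is exactly why your principal term, carrying the factor $\hat\mu\cdot T\hat\mu\approx 1$, seems to produce an irreducible $h^{-1}$ nodal jump. In your reading it really is irreducible.

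With the correct reading, your principal term is $\int_{\Ghsm}\partial_s\ehm\cdot\Nsm\Thsm\,\partial_s\phi_h$. The missing step is $\Nsm\Thsm=\Nsm(\Thsm-\Tsm)$, whose $L^\infty$ norm is $\lesssim h$ by \eqref{normal-intpl}. The term is therefore bounded by $h\|\nabla_{\Ghsm}\ehm\|_{L^2(\Ghsm)}\|\nabla_{\Ghsm}\phi_h\|_{L^2(\Ghsm)}$. An inverse inequality on either factor then gives both alternatives of the minimum, with no integration by parts. This is the mechanism of the paper's $\mathscr A_4$: $\Thsm$ is swapped for $\Tsm$ at the very start, after which your principal term vanishes identically because $\Tsm\Nsm=0$.

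Your ``first try'' points in this direction but never makes the swap. Your fallback names the wrong small quantity: the jump of $\partial_s\phi_h$ at a node is $O(|\nabla\phi_h|)$ however small the kink of $\nhsm$ is. What rescues the estimate is the pointwise misalignment $|\nhsm-\nsm|\lesssim h$ in the prefactor, and once you use it the elementwise integration by parts is superfluous.

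Two smaller points. First, in the second alternative the interpolation-defect term still needs an inverse inequality on $\phi_h$; it is not ``directly acceptable''. Second, elementwise integration by parts of the lower-order term $(\partial_s\Nsm)\ehm$ also creates nodal jumps, of $\Thsm$ and of $(\nabla\Nsm)\hat\mu$. These are harmless because they are $O(h)$, but you do not address them. The paper avoids them by lifting this term to the smooth closed curve $\Gm$ and integrating by parts there ($\mathscr A_1$), at the cost of an $O(h)$ geometric perturbation ($\mathscr A_3$).
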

\begin{proof}
	Using local integration by parts, we obtain
	\begin{align}
		\mathscr A_{h, *}^T(\hat{e}_h^m,\phi_h)
		&=
		\int_\Ghsm \nabla_\Ghsm \ehm\cdot \Tsm \nabla_\Ghsm \phi_h
		+
		\int_\Ghsm \nabla_\Ghsm \ehm\cdot (\Thsm-\Tsm) \nabla_\Ghsm \phi_h
		\notag\\
		&=
		-
		\int_\Gm  (\nabla_\Gm T^m\cdot (\ehm)^\ell)\cdot \nabla_\Gm (\phi_h)^\ell
		\notag\\
		&\quad
		+
		\int_\Ghsm \nabla_\Ghsm (\ehm\cdot \Tsm)\cdot \nabla_\Ghsm \phi_h
		\notag\\
		&\quad
		-
		\int_\Ghsm  (\nabla_\Ghsm \Tsm\cdot \ehm)\cdot \nabla_\Ghsm \phi_h
		+
		\int_\Gm  (\nabla_\Gm T^m\cdot (\ehm)^\ell)\cdot \nabla_\Gm (\phi_h)^\ell
		\notag\\
		&\quad+
		\int_\Ghsm \nabla_\Ghsm \ehm\cdot (\Thsm-\Tsm) \nabla_\Ghsm \phi_h
		\notag\\
		&=:
		\sum_{j=1}^4 \mathscr A_j(\hat{e}_h^m,\phi_h)
		. \notag
	\end{align}
	Integration by parts on the smooth closed curve $\Gm$ gives
	\begin{align}
		| \mathscr A_{1}(\hat{e}_h^m,\phi_h) | 
		&\lesssim \min\Big\{\| \ehm \|_{L^2({\Ghsm})}  \| \phi_h \|_{H^1({\Ghsm})} , \| \ehm \|_{H^1({\Ghsm})}  \| \phi_h \|_{L^2({\Ghsm})}\Big\}
		. \notag
	\end{align}
	By Lemma~\ref{lemma:T<=N2},
	\begin{align}
		| \mathscr A_{2}(\hat{e}_h^m,\phi_h) | 
		&\lesssim h\| \ehm \|_{H^1({\Ghsm})}  \| \phi_h \|_{H^1({\Ghsm})}
		. \notag
	\end{align}
	Lemma~\ref{lemma:geo-pert} yields
	\begin{align}
		| \mathscr A_{3}(\hat{e}_h^m,\phi_h) | 
		&\lesssim h\| \ehm \|_{L^2({\Ghsm})}  \| \phi_h \|_{H^1({\Ghsm})}
		. \notag
	\end{align}
	Finally, \eqref{normal-intpl} implies
	\begin{align}
		| \mathscr A_{4}(\hat{e}_h^m,\phi_h) | 
		&\lesssim h\| \nabla_\Ghsm \ehm \|_{L^2({\Ghsm})}  \| \nabla_\Ghsm \phi_h \|_{L^2({\Ghsm})}
		. \notag
	\end{align}
	The proof is complete upon collecting all the estimates above.
\end{proof}

\section{Discrete norms}\label{sec:disc-norm}

Since the weights of the Gauss--Lobatto quadrature are positive, the discrete $L^p$ norm defined by 
\begin{align}
	\| v \|_{L^p_h(\Ghsm)}  &:= \Big( \int_\Ghsm^h |v|^p \Big)^{\frac1p} 
	= \Big( \sum\limits_{K \subset \Ghsm} \int_{K_{\rm f}^0} I_{K_{\rm f}^0} \big( |v \circ F_K |^p |\nabla_{K_{\rm f}^0} F_K| \big) \Big)^{\frac1p}\quad p\in[1,\infty),
	\notag\\
	\| v \|_{L^\infty_h(\Ghsm)}  &:= \| v \|_{L^\infty(\Ghsm)}  , \notag
\end{align}
is indeed a norm on the finite element space $S_h(\Ghsm)$ because $\| v \|_{L^p_h(\Ghsm)}=0$ iff $v=0$ at all the nodes of $\Ghsm$. In addition, this discrete $L^p$ norm is also well defined for functions which are piecewise continuous on $\Ghsm$. Its basic properties are summarized below; cf. \cite[Lemma 3.7]{BL2025}. 

\begin{lemma}\label{lemma:lump}
	Given $p\in[1,\infty]$, the following relations hold for all finite element functions $v_h\in S_h(\Ghsm)$ and piecewise continuous functions $w_1, w_2, w_3$ on $\Ghsm$: 
	\begin{align}
		\| v_h \|_{L^p_h(\Ghsm)} &\sim \| v_h \|_{L^p(\Ghsm)} , \notag\\
		\| \nabla_\Ghsm v_h \|_{L^p_h(\Ghsm)} &\sim \| \nabla_\Ghsm v_h \|_{L^p(\Ghsm)} , \notag\\
		\Big|\int_\Ghsm^h w_1 w_2 w_3 \Big| &\lesssim \| w_1 \|_{L^\infty(\Ghsm)} \| w_2 \|_{L^2_h(\Ghsm)} \| w_3 \|_{L^2_h(\Ghsm)}  \notag .
	\end{align}
\end{lemma}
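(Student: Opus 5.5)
The plan is to prove the three relations of Lemma~\ref{lemma:lump} elementwise and then sum over elements, pulling everything back to the flat reference segments $K_{\rm f}^0$ through the affine maps $F_K$. On each $K$ the mass-lumped integral is the two-point (endpoint) Gauss--Lobatto rule with positive weights $|K|/2$. The norm-equivalence constants are therefore determined by a computation on the reference interval $\hat K=[0,1]$, together with the bounds $|\nabla_{K_{\rm f}^0}F_K|\in[\kappa_l^{-1},\kappa_l]$ contained in \eqref{PP}.

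For the first relation, take a linear function $v$ on $\hat K$. The maps $v\mapsto(\int_0^1|v|^p)^{1/p}$ and $v\mapsto(\tfrac12|v(0)|^p+\tfrac12|v(1)|^p)^{1/p}$ are both norms on the two-dimensional space $\mathbb P_1(\hat K)$. They are therefore equivalent, and the equivalence constants can be taken uniform in $p\in[1,\infty]$. To see the uniformity concretely, bound
\[
\|v\|_{L^p(\hat K)}\le\max(|v(0)|,|v(1)|)\le 2^{1/p}\Big(\tfrac12|v(0)|^p+\tfrac12|v(1)|^p\Big)^{1/p}.
\]
For the reverse direction, use the fact that $|v|\ge\tfrac12\max(|v(0)|,|v(1)|)$ on a subinterval of length $\ge\tfrac14$. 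Multiplying by $|K|$ (or by the Jacobian weight $|\nabla_{K_{\rm f}^0}F_K|$), summing over $K$, and taking the $p$-th root gives $\|v_h\|_{L^p_h}\sim\|v_h\|_{L^p}$ on $\Ghsm$. The case $p=\infty$ is immediate, since $\|\cdot\|_{L^\infty_h}$ is defined as the $L^\infty$ norm.

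For the gradient relation, note that $\nabla_\Ghsm v_h$ is constant on each element, so it is only piecewise continuous. Accordingly, the lumped integral is understood elementwise using the one-sided nodal traces, exactly as in the definition of $\|\cdot\|_{L^p_h(\Ghsm)}$ for piecewise continuous functions. Lumping is then exact on each element, and the two quantities coincide, or are equivalent with constants depending only on the Jacobian when the parametrization is curved in general degree.

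For the trilinear bound, write the lumped integral as a positive-weight nodal sum
\[
\sum_K\sum_{p\in\mathcal N(K)}\tfrac{|K|}{2}\,(w_1w_2w_3)(p\pm),
\]
taking one-sided traces. Pull out $\sup|w_1|\le\|w_1\|_{L^\infty(\Ghsm)}$, then apply the discrete Cauchy--Schwarz inequality with the same positive weights to the remaining sum, which yields $\|w_2\|_{L^2_h}\|w_3\|_{L^2_h}$.

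The main obstacle is keeping the constants uniform in $p$ and in the mesh. Uniformity in $p$ comes from the explicit $2^{1/p}$ and $\tfrac14$-interval bounds rather than from an abstract compactness argument. Uniformity in the mesh comes from the uniform Jacobian bounds supplied by $\kappa_l$; this is what makes the constants hidden in $\sim$ and $\lesssim$ depend only on $\kappa_l$, as in the rest of the paper.
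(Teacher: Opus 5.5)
Your proof is correct. It is essentially the standard argument the paper relies on: the paper gives no proof of its own, only citing \cite[Lemma 3.7]{BL2025} after noting that positivity of the Gauss--Lobatto weights makes $\|\cdot\|_{L^p_h(\Ghsm)}$ a norm on $S_h(\Ghsm)$. Your version is an elementwise comparison on the reference interval followed by summation, with your explicit $2^{1/p}$ and quarter-interval bounds replacing the abstract equivalence of norms on $\mathbb P_1(\hat K)$, which also yields $p$-uniform constants. One minor remark: for $k=1$ each $F_K$ is affine, so the Jacobian $|\nabla_{K_{\rm f}^0}F_K|$ is constant on each element and enters the exact and lumped integrals identically. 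The constants are therefore absolute, and the bounds from $\kappa_l$ are not needed.
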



\section{Proof of Lemma~\ref{lemma:e-convert}}
\label{sec:e-convert}

This appendix proves the stability estimate for converting
\[
\|\ehm\|_{L_h^2(\hat\Gamma_{h,*}^m)}^2
\quad\text{into}\quad
\|e_h^m\|_{L_h^2(\hat\Gamma_{h,*}^{m-1})}^2
\]
at each time level. For $m=0$, the conclusion follows directly from
the convention stated in Lemma~\ref{lemma:e-convert}. We therefore
assume that $m\geq1$.

We first decompose the difference as
\begin{align}
	&\| \ehm \|_{L^2_h(\hat\Gamma_{h,*}^{m})}^2 - \| e_h^m \|_{L^2_h(\hat\Gamma_{h,*}^{m-1})}^2 \notag\\
	&= \| e_h^m \|_{L^2_h(\hat\Gamma_{h,*}^{m})}^2 - \| e_h^m \|_{L^2_h(\hat\Gamma_{h,*}^{m-1})}^2 
	\notag\\
	&\quad+ \| \ehm \|_{L^2_h(\hat\Gamma_{h,*}^{m})}^2 - \| e_h^m \|_{L^2_h(\hat\Gamma_{h,*}^{m})}^2 
	\notag\\
	&=: \mathscr M_1^m + \mathscr M_2^m  . \notag
\end{align}
By the fundamental theorem of calculus, the displacement estimate \eqref{eq:hat-X-diff-W1inf1}, the tangential velocity estimate \eqref{eq:tan_stab_H1_e2}, and the norm equivalences established above, we obtain
\begin{align}
	\mathscr M_1^m
	&=  \| e_h^m \|_{L^2_h(\hat\Gamma_{h,*}^{m})}^2 - \| e_h^m \|_{L^2_h(\hat\Gamma_{h,*}^{m-1})}^2   \notag\\
	&\lesssim \| \nabla_{\hat\Gamma_{h,*}^{m-1}} (\hat X_{h,*}^{m} - \hat X_{h,*}^{m-1}) \|_{L^\infty({\hat\Gamma_{h,*}^{m-1}})} \| e_h^m \|_{L^2(\Ghsm)}^2 \notag\\
	&\lesssim 
	\tau(1 + h^{-1/2}\|  \nabla_{\hat\Gamma_{h,*}^{m-1}} I_h T_*^{m-1} \delta_\tau \hat e_h^{m-1} \|_{L^2({\hat\Gamma_{h,*}^{m-1}})}) \| e_h^m \|_{L^2(\Ghsm)}^2 
	\notag\\
	&\lesssim 
	\tau(1 + h^{-3/2}(\tau+h^2) + h^{-3/2} \|  \nabla_{\hat\Gamma_{h,*}^{m-1}} \hat e_h^{m-1} \|_{L^2(\hat\Gamma_{h,*}^{m-1})} + h^{-3} \|  \nabla_{\hat\Gamma_{h,*}^{m-1}} \hat e_h^{m-1} \|_{L^2(\hat\Gamma_{h,*}^{m-1})}^2) \| e_h^m \|_{L^2(\Ghsm)}^2 
	.  \notag
\end{align}
For $\mathscr M_2^m$, we use \eqref{eq:hat-stab} to get
\begin{align}
	\mathscr M_2^m 
	=  \| \ehm \|_{L^2_h(\hat\Gamma_{h,*}^{m})}^2 - \| e_h^m \|_{L^2_h(\hat\Gamma_{h,*}^{m})}^2  
	\leq 0
	. \notag
\end{align}
The proof of Lemma~\ref{lemma:e-convert} is complete by collecting the above estimates and using the norm equivalence, the induction hypothesis \eqref{eq:ind_hypo1}, the step-size condition $\tau\simeq h^2$ and Young's inequality.

\renewcommand{\refname}{\bf References}

\bibliographystyle{abbrv}
\bibliography{MCF}

\end{document}